\newtheorem{thm}{Theorem}[section]
\newtheorem{cor}[thm]{Corollary}
\newtheorem{prop}[thm]{Proposition}
\newtheorem{lemma}[thm]{Lemma}
\theoremstyle{definition}
\newtheorem{defn}[thm]{Definition}
\newtheorem{exmp}[thm]{Example}
\theoremstyle{remark}
\newtheorem{rem}[thm]{Remark}
\numberwithin{equation}{section}
\DeclareMathOperator{\aut}{Aut}
\DeclareMathOperator{\en}{End}
\DeclareMathOperator{\heit}{ht}
\DeclareMathOperator{\Imm}{Im}
\DeclareMathOperator{\irr}{Irr}
\DeclareMathOperator{\lie}{Lie}
\DeclareMathOperator{\adm}{Adm}
\DeclareMathOperator{\coprin}{CoPr}
\DeclareMathOperator{\prin}{Pr}
\DeclareMathOperator{\coorprin}{(Co)Pr}
\DeclareMathOperator{\res}{Res}
\DeclareMathOperator{\spec}{Spec}
\DeclareMathOperator{\str}{STr}
\DeclareMathOperator{\tr}{Tr}
\DeclareMathOperator{\zhu}{Zhu}
\DeclareMathOperator{\supp}{Supp}
\newcommand{\C}{\mathbb{C}}
\newcommand{\R}{\mathbb{R}}
\newcommand{\Q}{\mathbb{Q}}
\newcommand{\Z}{\mathbb{Z}}
\newcommand{\CC}{\mathcal{C}}
\newcommand{\HH}{\mathcal{H}}
\newcommand{\OO}{\mathcal{O}}
\newcommand{\W}{\mathcal{W}}
\newcommand{\mb}{\mathfrak{b}}
\newcommand{\g}{\mathfrak{g}}
\newcommand{\h}{\mathfrak{h}}
\newcommand{\n}{\mathfrak{n}}
\newcommand{\al}{\alpha}
\newcommand{\ga}{\gamma}
\newcommand{\D}{\Delta}
\newcommand{\eps}{\epsilon}
\newcommand{\la}{\lambda}
\newcommand{\La}{\Lambda}
\newcommand{\om}{\omega}
\newcommand{\ov}{\overline}
\newcommand{\vac}{{\left|0\right>}}
\newcommand{\twobytwo}[4]
{\left(\begin{smallmatrix} #1 & #2 \\ #3 & #4 \end{smallmatrix}\right)}
\newcommand{\cliff}{\mathcal{C}\ell}
\DeclareMathOperator{\height}{ht}
\newcounter{ForTheListInLemma34}
\title[Modularity and Affine $W$-Algebras]{Modularity of Relatively Rational Vertex Algebras and Fusion Rules of Principal Affine $W$-Algebras}
\author[*]{Tomoyuki Arakawa}
\author[**]{Jethro van Ekeren}
\begin{document}

\begin{center}
{\LARGE \bf Modularity of Relatively Rational Vertex Algebras and Fusion Rules of Principal Affine $W$-Algebras} \par \bigskip

\renewcommand*{\thefootnote}{\fnsymbol{footnote}}
{\normalsize
Tomoyuki Arakawa\textsuperscript{1},
Jethro van Ekeren\footnote{email: \texttt{jethrovanekeren@gmail.com}}\textsuperscript{2}
}

\par \bigskip

\textsuperscript{1}{\footnotesize Research Institute for Mathematical Sciences, Kyoto, Japan/MIT, Cambridge, USA}

\par

\textsuperscript{2}{\footnotesize Instituto de Matem\'{a}tica e Estat\'{i}stica (GMA), UFF, Niter\'{o}i RJ, Brazil}

\par \bigskip
\end{center}

\vspace*{10mm}

\noindent
\textbf{Abstract.} We study modularity of the characters of a vertex (super)algebra equipped with a family of conformal structures. Along the way we introduce the notions of rationality and cofiniteness relative to such a family. We apply the results to determine modular transformations of trace functions on admissible modules over affine Kac-Moody algebras and, via BRST reduction, trace functions on minimal series representations of principal affine $W$-algebras.

\vspace*{10mm}

\section{Introduction}

A striking feature of the representation theory of infinite dimensional Lie algebras and vertex algebras is the appearance of modular functions as normalised graded dimensions of integrable modules. The phenomenon of modularity is in turn the source of important technical tools in the representation theory of these algebras.

Let $(V, \om)$ be a conformal vertex algebra of central charge $c$, assumed to be rational and $C_2$-cofinite. In \cite{Z96} Zhu proved modular invariance of the normalised graded dimensions of irreducible positive energy $V$-modules, and more generally of the trace functions
\[
S_M(\tau | u) = q^{-c/24} \sum_{n=0}^\infty \tr_{M_n} u_0 q^{L_0},
\]
of such modules. Here $u \in V$, and $S_M(\tau | u)$ is viewed as a holomorphic function of $\tau \in \HH$ where $q = e^{2\pi i \tau}$ and $\HH$ is the upper half complex plane. In particular
\begin{align}\label{Zhu.S}
S_M(-1/\tau | \tau^{-L_{[0]}} u) = \sum_{M'} S_{M, M'} S_{M'}(\tau | u)
\end{align}
where
\begin{align*}
L_{[0]} = L_0 - \sum_{j=1}^\infty \frac{(-1)^j}{j(j+1)} L_j
\end{align*}
is the neutral Zhu mode (see (\ref{ZhuConformalZeroMode})), and the sum here is over the set of irreducible positive energy $V$-modules.

The $S$-matrix $\{S_{M,M'}\}$ is an important datum associated with $V$. Indeed the celebrated Verlinde formula \cite{Verlinde88} (proved as a theorem of vertex algebras by Huang \cite{Huang08Contemp}) determines the decomposition multiplicities of the fusion product between $V$-modules in terms of the $S$-matrix.

The specialisation of (\ref{Zhu.S}) to $u = \vac$ recovers modularity of normalised graded dimensions of modules. For the purpose of computing the $S$-matrix it is important to allow the insertion of arbitrary $u \in V$. This is because, while the $S_M(\tau | u)$ are known to be linearly independent, their restrictions $S_M(\tau | \vac)$ need not be.

In the first half of this paper we study modularity in the context of a vertex algebra $V$ together with an infinitesimal variation of its conformal structure $\om$.

Let $(V, \om)$ be a conformal vertex algebra, and let $h \in V$ be a current (i.e., a vector of conformal weight $1$). It is well known that modification of $\om$ by the derivative of $h$ defines a new ``shifted'' conformal vector, so that
\[
\om(z) = \om - zTh
\]
defines a family of conformal structures on $V$, indexed by the parameter $z$. (For explanation of technical terms used in this introduction, we refer the reader to Section \ref{section.prelim}.)

The vertex algebra $(V, \om)$ is said to be rational if its category of positive energy modules is semisimple. Since the positive energy condition depends on a choice of conformal structure, so does the condition of rationality. One is thus presented with the possibility of a family of conformal structures $(V, \om(z))$ as above for which $V$ is ``generically rational'', i.e., rational for all $\om(z)$ in some neighbourhood of $\om$, but not necessarily at $\om$ itself. In fact this situation occurs relatively frequently.

Indeed we may speak of the subcategory of the category of positive energy $(V, \om)$-modules which retain the positive energy condition upon deformation of $\om$ to $\om(z)$ for small $z \in \R_{>0}$. We call such modules ``$h$-stable'', and we say that $(V, \om)$ is rational relative to $h$ if its category of $h$-stable positive energy modules is semisimple.

In \cite{Z96} Zhu introduced the commutative (indeed Poisson) algebra $R(V) = V / V_{(-2)}V$ canonically associated with the vertex algebra $V$. He also identified the condition $\dim{R(V)} < \infty$ as crucial for establishing his modularity theorem. A vertex algebra satisfying this condition is said to be ``$C_2$-cofinite'' or ``lisse''. In the relative setting it is appropriate to replace $C_2$-cofiniteness with a weaker condition involving $V$ and its subalgebra $V^0 \subseteq V$ consisting of vectors that commute with $h$. Accordingly we make the following general definition.
\begin{defn}
Let $V$ be a vertex algebra equipped with a decomposition $V = V^0 \oplus V^+$ as the direct sum of a vertex subalgebra $V^0$ and a $V^0$-module $V^+$. The quotient
\[
R^{\text{rel}}(V) = \frac{V}{V^0_{(-2)}V^0 + V_{(-1)}V^+}
\]
is a commutative algebra. We say that $V$ is \emph{cofinite} relative to the decomposition $V = V^0 \oplus V^+$ if
\[
\dim{R^{\text{rel}}(V)} < \infty.
\]
\end{defn}
We remark that relative cofiniteness is implied by $C_2$-cofiniteness either of $V$ itself or of $V^0$, but the converse is not true. The applications that most interest us involve relatively cofinite but \emph{non} $C_2$-cofinite vertex algebras.

Let $(V, \om)$ and $h$ be as above, and suppose that $V$ decomposes under the action of $h_0$ into the sum $V^0 \oplus V^+$ of the zero eigenspace $V^0$ and a complementary invariant subspace $V^+$. We say that $V$ is cofinite relative to $h$ if it is cofinite relative to the decomposition $V^0 \oplus V^+$.

The neutral mode of $\om(z)$ is $L_0(h) = L_0 + zh_0$. Hence trace functions on $(V, \om(z))$-modules are naturally functions of $z$ alongside $\tau$ and $u \in V$. The following theorem summarises the main results of Section \ref{main.computation}.% In fact we derive it from the stronger but more technical Proposition \ref{prop.Fmodular}.
{%\color{blue}
\begin{thm}\label{main.modularity.intro}
Let $(V, \om)$ be a conformal vertex \textup{(}super\textup{)}algebra graded by integer conformal weights. Let $h \in V$ be a current satisfying the OPE relations
\[
[h_\la h] = 2\la \vac \quad \text{and} \quad [L_\la h] = (T+\la)h + p \tfrac{\la^2}{2} \vac,
\]
where $p$ is some constant, and such that $h_0$ acts semisimply on $V$. Assume $(V, \om)$ to be rational relative to $h$ and cofinite relative to $h$, and write $\mathcal{X}$ for the set of irreducible $h$-stable positive energy $V$-modules. For $u \in V$ and $M \in \mathcal{X}$ we consider the supertrace function
\[
F_{M}(\tau, z | u) = \str_M u_0 e^{2\pi i z(h_0 - p/2)} q^{L_0 - c/24}.
\]
There exists $\varepsilon > 0$ such that for each $M \in \mathcal{X}$ and all $u \in V$ the supertrace function $F_M(\tau, z | u)$ converges absolutely uniformly on compact subsets of the domain
\[
\{(\tau, z) \in \HH \times \C | 0 < \Imm(z) < \varepsilon \Imm(\tau)\}.
\]
We consider the following action of $SL_2(\Z)$ on functions $F$ of $\tau$ and $z$ and linear in $u \in V$:
\begin{align}\label{S-action-general}
[F \cdot A](\tau, z | u) = \exp\left[ -2\pi i \frac{c z^2}{c\tau+d} \right] F\left( \frac{a\tau+b}{c\tau+d}, \frac{z}{c\tau+d} \bigg| (c\tau+d)^{-L_{[0]}} \exp{\left[ -\frac{c z}{c\tau+d} I(h) \right]} u \right),
\end{align}
where
\begin{align}\label{def.ih}
I(h) = \sum_{j=1}^\infty \frac{(-1)^j}{-j} h_j.
\end{align}
Suppose \textup{(}1\textup{)} that the set of functions $F_M(\tau, z | \vac)$ as $M$ runs over $\mathcal{X}$ is modular invariant, i.e., that there exists a representation $\upsilon$ of $SL_2(\Z)$ for which (\ref{Fmodular.repeat}) holds for $u = \vac$, and \textup{(}2\textup{)} that for each $\alpha \in (0, \varepsilon) \subset \R$ and $\beta \in \R$ the set of functions $F_M(\tau, \alpha\tau + \beta | \vac)$, as $M$ runs over $\mathcal{X}$, is linearly independent. Then for all $u \in V$ the relation
\begin{align}\label{Fmodular.repeat.intro}
[F_M \cdot A](\tau, z | u) = \sum_{M' \in \mathcal{X}} \rho_{M, M'}(A) F_{M'}(\tau, z | u)
\end{align}
is satisfied in the intersection of the domains of convergence of the two sides.
\end{thm}
}
We make some remarks on the theorem and its proof. The essential idea is to apply Zhu's modularity theorem to the vertex algebra $(V, \om(z))$. However $\om(z)$ equips $V$ with noninteger conformal weights, and Zhu's theorem does not apply in this case. In \cite{JVE13} it is shown instead that modular transformations map the trace functions $F_M$ to trace functions on particular \emph{twisted} modules. The task becomes to relate trace functions on twisted and untwisted $V$-modules. This is achieved by use of Li's shift operators $\D(u, z)$. The condition of relative cofiniteness is inspired by the work \cite{DLMadmiss}, see also \cite{JVE13}.

The transformation (\ref{S-action-general}) was uncovered in the case of $N=2$ superconformal vertex algebras in {\cite[Theorem 9.13 (b)]{HVE14}}, with $h$ equal to the $U(1)$ current of the $N=2$ algebra. There the functions $F_M$ are shown to be flat sections of the bundle of conformal blocks over the universal elliptic curve, and (\ref{S-action-general}) is derived from the geometry of this bundle. We also note that a result closely related to Theorem \ref{main.modularity.intro} was recently independently obtained in \cite{Krauel.C2.case} in the case of $V$ rational and $C_2$-cofinite (see also \cite{KM15}).

An important class of vertex algebras that are relatively cofinite and generically rational in the sense discussed above is afforded by the simple affine vertex algebras at admissible level.

Let $\ov\g$ be a finite dimensional simple Lie algebra over $\C$, and $\g$ the corresponding affine Kac-Moody algebra. In \cite{KWPNAS} Kac and Wakimoto identified the notion of admissible weight and initiated the study of the characters
\[
\chi_\la(\tau, x) = \tr_{L(\la)} e^{2\pi i x_0} q^{L_0 - c_k/24} \quad \text{(where $x \in \h$, $\tau \in \HH$, and $q=e^{2\pi i \tau}$)}
\]
of the irreducible $\g$-modules of admissible highest weight $\la$.

We recall that $k \in \Q$ is said to be an admissible number for $\ov\g$ if $k\La_0$ is an admissible weight. If $k$ is an admissible number then it is either \emph{principal} or else \emph{coprincipal}. Roughly speaking these cases distinguish whether the integrable root system of $k\La_0$ is equivalent to that of $\g$ or else to that of the Langlands dual ${}^L(\widehat{{}^L\ov\g})$ of the affine algebra associated with ${}^L\ov\g$, respectively (see Section \ref{liealgebras} for precise definitions). We denote by $\prin^k$ (resp.\ $\coprin^k$) the set of principal (resp.\ coprincipal) weights of level $k$.

In \cite{KW89} Kac and Wakimoto showed that if $k \in \Q$ is a principal admissible number for $\ov\g$ and $\la \in \prin^k$ then
\[
\chi_\la\left(\frac{a\tau+b}{c\tau+d}, \frac{x}{c\tau+d}\right) = \exp{\left[ 2\pi i k \frac{c (x, x)}{2(c\tau+d)} \right]} \sum_{\la' \in \prin^k} \rho_{\la, \la'}(A) \chi_{\la'}(\tau, x)
\]
for some representation $\rho$ of $SL_2(\Z)$. They also explicitly computed the $S$-matrix
\[
a({\la, \la'}) = \rho_{\la, \la'}\twobytwo{0}{-1}{1}{0}.
\]
In Section \ref{section.coprin.Smatrix} we extend this result to the coprincipal case, and we compute the $S$-matrix explicitly.

Now let $V^k(\ov\g)$ be the universal affine vertex algebra at admissible level $k$, and $V_k(\ov\g)$ its simple quotient. A smooth ${\g}$-module of level $k$ is naturally a $V^k(\ov\g)$-module. Consider the subcategory of the BGG category $\OO_k$ consisting of modules that descend to $V_k(\ov\g)$-modules. It was 
 conjectured
 in \cite{AdaMil95} and
 proved in {\cite{A12catO}} that this category is semisimple, i.e., that $V_k(\ov\g)$ is \emph{rational in the category $\OO$}. Furthermore if $k$ is principal (resp.\ coprincipal) then the simple objects are precisely the irreducible ${\g}$-modules $L(\la)$ for $\la \in \prin^k$ (resp.\ $\la \in \coprin^k$).

For $\la$ an admissible weight, we introduce the trace function
\begin{align}\label{F.definition.introduction}
\Psi_\la(\tau, x | u) = \tr_{L(\la)} u_0 e^{2\pi i x_0} q^{L_0 - c/24},
\end{align}
of $u \in V_k(\ov\g)$ on $L(\la)$. The Kac-Wakimoto character $\chi_\la$ is recovered from $\Psi_\la$ as the $u=\vac$ specialisation. As an application of Theorem \ref{main.modularity.intro} we prove the following.
\begin{thm}\label{theorem1}
Let $\ov\g$ be a simple Lie algebra and $k \in \Q$ a \textup{(}co\textup{)}principal admissible level for $\ov\g$. For all $\la \in \coorprin^k$ we have
\begin{align*}
{\Psi}_\la\left( \frac{a\tau+b}{c\tau+d}, \frac{x}{c\tau+d} \bigg| (c\tau+d)^{-L_{[0]}} \exp{\left[ -\frac{c}{c\tau+d} {I(x)} \right]} u \right) = \exp\left( 2\pi i k \frac{c (x, x)}{2(c\tau+d)} \right) \sum_{\la' \in \coorprin^k} \rho_{\la, \la'}(A) {\Psi}_{\la'}(\tau, x | u)
\end{align*}
where $\rho$ is a representation of $SL_2(\Z)$. The $S$-matrix $a(\la, \la') = \rho\twobytwo{0}{-1}{1}{0}$ is given in {\cite[Theorem 3.6]{KW89}} \textup{(}see Theorem \ref{KWSmatrix} below\textup{)} if $k$ is principal, and by Theorem \ref{KWSmatrix.coprin} below if $k$ is coprincipal.
\end{thm}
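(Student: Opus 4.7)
The plan is to deduce Theorem \ref{theorem1} from Theorem \ref{main.modularity.intro} applied to the simple admissible affine vertex algebra $V_k(\ov\g)$. For each $v \in \ov\h$ with $(v,v) \neq 0$, rescaling yields a current $h \in V_k(\ov\g)$ satisfying $[h_\la h] = 2\la$ and $[L_\la h] = (T+\la)h$ with $p = 0$, and $h_0$ acts semisimply on $V_k(\ov\g)$ via the adjoint Cartan action. Since both sides of the asserted modular identity depend holomorphically on $x \in \ov\h$, it suffices to verify the identity along each line $x = zv \subset \ov\h$; this is the setting of Theorem \ref{main.modularity.intro} with scalar parameter $z$. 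The scalar prefactor $\exp(-2\pi i c z^2/(c\tau+d))$ of the general theorem reassembles under this reparametrisation into the $(x,x)$-dependent prefactor of Theorem \ref{theorem1}.

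Two substantive hypotheses of Theorem \ref{main.modularity.intro} must be checked. Relative rationality is the statement that the category of $h$-stable positive-energy $V_k(\ov\g)$-modules is semisimple; it will follow from the semisimplicity of the subcategory of $\OO_k$ consisting of modules descending to $V_k(\ov\g)$, established in \cite{A12catO}, combined with the fact that on an admissible module $L(\la)$ the eigenvalues of $h_0$ are bounded above on each $L_0$-eigenspace (weights lie in $\la - Q_+$), so that positive energy is preserved under small deformation of $L_0$ by $zh_0$. Relative cofiniteness, namely $\dim R^{\textup{rel}}(V_k(\ov\g)) < \infty$ with respect to the $h_0$-decomposition $V_k(\ov\g) = V^0 \oplus V^+$, is the main technical input; since the simple subalgebra $V^0$ is not itself $C_2$-cofinite, we cannot invoke the easy criterion noted after the definition of relative cofiniteness, and must instead argue directly, leveraging the known description of the associated variety of $V_k(\ov\g)$ at admissible level as a closure of a nilpotent orbit in $\ov\g$ to bound $R^{\textup{rel}}$.

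With both hypotheses verified, Theorem \ref{main.modularity.intro} supplies a representation $\rho$ of $SL_2(\Z)$ governing the modular transformation of $F_\la$ for $\la \in \coorprin^k$. To identify $\rho$, and in particular its $S$-matrix, we specialise to $u = \vac$: the $F_\la(\tau, x | \vac)$ are then the Kac-Wakimoto characters $\chi_\la$, whose $SL_2(\Z)$ transformation and explicit $S$-matrix are supplied by \cite[Theorem 3.6]{KW89} in the principal case and by Theorem \ref{KWSmatrix.coprin} of the present paper in the coprincipal case, and this pins down $\rho$. The second bullet of Theorem \ref{main.modularity.intro}, which propagates the relation from $u = \vac$ to arbitrary $u$, then yields the stated identity for all $u \in V_k(\ov\g)$. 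The main obstacle in this programme is the verification of relative cofiniteness, delicate precisely because $V_k(\ov\g)$ is typically not $C_2$-cofinite at admissible level and the zero-weight subalgebra $V^0$ fails to be $C_2$-cofinite as well.
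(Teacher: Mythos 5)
Your proposal follows essentially the same route as the paper: Theorem \ref{theorem1.affine} is proved there by applying Theorem \ref{theorem1.body} to $V_k(\ov\g)$ with $h$ a rescaled Cartan element (so $p=0$ and $\left<h,h\right>=2$), with relative rationality reduced to the category-$\OO$ rationality of \cite{A12catO} (Lemma \ref{adm.rel.rational}), relative cofiniteness deduced from the associated-variety inclusion via $X^{\text{rel}}_V \subset X_V \cap \ov\h \subset \mathcal{N} \cap \ov\h = \{0\}$ (Lemma \ref{adm.rel.cofinite}), and the representation $\rho$ pinned down by specialising to $u=\vac$, exactly as you outline. The one caveat is that you should take $v$ in the negative open chamber $C^{\circ}_-$ (as the paper does) rather than an arbitrary anisotropic $v \in \ov\h$, since both the boundedness below of $\om(\sigma)$ and the identification of the $h$-stable positive energy modules with the category-$\OO$ modules require it; as $C^{\circ}_-$ spans $\ov\h$, holomorphy in $x$ still recovers the full statement.
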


Finally we apply Theorem \ref{theorem1} to solve a problem in the representation theory of affine $W$-algebras.

Recall that from the data of $\ov\g$ and $k$ as above, plus a choice of nilpotent element $f \in \ov\g$, the universal affine $W$-algebra $\W^k(\g, f)$ is defined as the quantized Drinfeld-Sokolov reduction $H^0_f(V^k(\ov\g))$ \cite{FeiginFrenkel}, \cite{KRW03}. We focus on the case of $f$ a principal nilpotent element and $k$ a principal admissible number, and we omit $f$ from the notation.

Let $k$ be a non-degenerate admissible level. It was conjectured 
 in \cite{FKW} and proved in \cite{A.assoc.var} and 
 \cite{A12rational} that the simple quotient $\W_k(\ov\g)$ of $\W^k(\ov\g)$ is a rational and $C_2$-cofinite vertex algebra. Zhu's theorem therefore asserts modularity for $\W_k(\ov\g)$. The $S$-matrix of $\W_k(\ov\g)$ can be deduced from that of $V_k(\ov\g)$ using the Euler-Poincar\'{e} principle. With the $S$-matrix in hand one may use the Verlinde formula to compute the fusion rules of $\W_k(\ov\g)$.

The fusion rules of $\W_k(\ov\g)$, for $\ov\g$ simply laced, were worked out by Frenkel, Kac and Wakimoto in \cite{FKW} by carrying out the calculation outlined above at the level of the characters $\chi_\la$, i.e., at the level of graded dimensions of $\W_k(\ov\g)$-modules. As noted above the graded dimensions are not linearly independent. However Theorem \ref{theorem1} can be used to upgrade the calculation to the level of trace functions of arbitrary $u \in \W_k(\ov\g)$, which are linearly independent, and the result of \cite{FKW} is confirmed.

\emph{Acknowledgements} 
The first author is partially supported by JSPS KAKENHI  Grant Number 17H01086
and 17K18724. The second author was supported by an Alexander von Humboldt Foundation grant and later by CAPES-Brazil. The second author would like to thank Victor G. Kac for several ideas which go back to discussions had with him in 2011. 
Both authors would like to thank the referees 
for their helpful comments.
The work has been presented at conferences ``Lie and Jordan Algebras VI'', Bento Gon\c{c}alves, Brazil, December 2015, ``Qu\'{a}ntum 2016'', C\'{o}rdoba, Argentina, February 2016, and ``Vertex Algebras and Quantum Groups'' Banff, Canada, March 2016. The authors would like to thank the organisers of these conferences.

\emph{Notation} Implicitly tensor products are taken over the ground field $\C$ of complex numbers. The domain of the complex variable $\tau$ is the upper half complex plane, denoted $\HH$, and $q = e^{2\pi i \tau}$. The letter $c$ is used for the central charge, and in the matrix $\twobytwo abcd \in SL_2(\Z)$. We trust that no confusion will arise.

\section{Preliminaries on Vertex Algebras}\label{section.prelim}

\subsection{Vertex Algebras}

For background on vertex algebras we refer the reader to the book \cite{KacVA}. Note that `vertex algebra' implicitly includes the super case.
\begin{defn}\label{VOAdef}
A \emph{vertex algebra} consists of a vector superspace $V$ with a distinguished vacuum vector $\vac \in V$ and a vertex operation, which is an even linear map $V \otimes V \rightarrow V((z))$, written $u \otimes v \mapsto Y(u, z)v = \sum_{n \in \Z} u_{(n)}v z^{-n-1}$, such that the following are satisfied:
\begin{itemize}
\item (Unit axioms) $Y(\vac, z) = 1_V$ and $Y(u, z)\vac \in u + zV[[z]]$ for all $u \in V$.

\item (Borcherds identity)
\begin{align}\label{BorcherdsIdentity}
\sum_{\al \geq 0} \binom{m}{\al} (u_{(n+\al)}v)_{(m+k-\al)}x = \sum_{\al\geq 0} (-1)^\al \binom{n}{\al} \left[ u_{(m+n-\al)} v_{(k+\al)} - (-1)^n p(u, v) v_{(n+k-\al)} u_{(m+\al)} \right] x
\end{align}
for all $u, v, x \in V$, $k, m, n \in \Z$.
\end{itemize}
\end{defn}
The operator $T : u \mapsto u_{(-2)}\vac$ is called the translation operator and it satisfies $Y(Tu, z) = \partial_z Y(u, z)$. The operators $u_{(n)}$ are called \emph{modes}.

A useful special case of Borcherds identity is
\begin{align}\label{commutator.formula}
[u_{(m)}, v_{(n)}] = \sum_{j \in \Z_+} \binom{m}{j} (u_{(j)}v)_{(m+n-j)},
\end{align}
or, in the more compact $\la$-bracket notation,
\[
[u_\la v] = \sum_{j \in \Z_+} \frac{\la^j}{j!} u_{(j)}v.
\]

\begin{defn}
A \emph{conformal structure} on the vertex algebra $V$ is a vector $\om \in V$ such that $Y(\om, z) = L(z) = \sum_{n \in \Z} L_n z^{-n-2}$ furnishes $V$ with an action of the Virasoro algebra, i.e.,
\[
[L_m, L_n] = (m-n)L_{m+n} + \frac{m^3-m}{12}\delta_{m, -n}c
\]
for some constant $c \in \C$. This action is required to satisfy $L_{-1} = T$, and that $L_0$ act semisimply on $V$ with rational eigenvalues, bounded below. The constant $c$ is called the \emph{central charge} of $V$. A \emph{conformal vertex algebra} is a vertex algebra together with a choice of conformal structure.
\end{defn}
After fixing a conformal structure $\om$ on a vertex algebra $V$, we call the $L_0$-eigenvalue of an eigenvector $u \in V$ its \emph{conformal weight}, which we denote $\D(u)$, and we denote by $V_\D$ the subspace of vectors with conformal weight $\D$. The \emph{conformal indexing} of modes (relative to $\om$) is defined by
\[
Y(u, z) = \sum_{n \in \Q} u_n z^{-n-\D(u)}, \quad \text{i.e., $u_{n} = u_{(n+\D(u)-1)}$.}
\]
In terms of the conformal indexing Borcherds identity becomes
\begin{align}\label{ConformalBorcherdsIdentity}
\sum_{\al \geq 0} \binom{m+\D(u)-1}{\al} (u_{(n+\al)}v)_{m+k}x = \sum_{\al\geq 0} (-1)^\al \binom{n}{\al} \left[ u_{m+n-\al} v_{k+\al-n} - (-1)^n p(u, v) v_{k-\al} u_{m+\al} \right] x.
\end{align}

\begin{defn}
Let $V$ be a vertex algebra. A \emph{\textup{(}weak\textup{)} $V$-module} is a vector superspace $M$ together with an even map $Y^M : V \otimes M \rightarrow M((z))$, written $u \otimes x \mapsto Y(u, z)x = \sum_{n \in \Z} u_{(n)}x z^{-n-1}$, such that $Y^M(\vac, z) = 1_M$, and (\ref{BorcherdsIdentity}) holds for all $u, v \in V$, $x \in M$, and for all $m, k, n \in \Z$. Now let $V$ be a conformal vertex algebra. A \emph{positive energy $V$-module} is a weak $V$-module $M, Y^M$ with grading $M = \bigoplus_{\la \in \Q} M_\la$ by finite dimensional $L_0$-eigenspaces $M_\la$, with eigenvalues bounded below.
\end{defn}

An automorphism of the vertex algebra $V$ is $\sigma \in \en{V}$ such that $(\sigma u)_{(n)} = \sigma u_{(n)} \sigma^{-1}$. An automorphism of a conformal vertex algebra is one that fixes $\om$.
\begin{defn}\label{twisted.mod.def}
Let $g$ be an automorphism of the vertex algebra $V$ of finite order $K$, and write $V^\eps$ for its $e^{2\pi i \eps}$-eigenspace (so $\eps$ is defined modulo $\Z$). A \emph{\textup{(}weak\textup{)} $g$-twisted $V$-module} is a vector superspace $M$ together with an even map $Y^M : V \otimes M \rightarrow M((z^{1/K}))$, written $u \otimes x \mapsto Y(u, z)x = \sum_{n \in \eps + \Z} u_{n}x z^{-n-\D(u)}$ for $u \in V^\eps$, such that $Y^M(\vac, z) = 1_M$, and (\ref{ConformalBorcherdsIdentity}) holds for all $u \in V^{\eps}$, $v \in V^{\eps'}$ and $x \in M$, and for all $m \in \eps + \Z$, $k \in \eps' + \Z$ and $n \in \Z$.
\end{defn}
\begin{rem}
For $V$ integer graded our definition coincides with that used in {\cite{DLM98}} and in {\cite{Li95}}. In {\cite{DLM00}} a different convention is used which exchanges the notions of $g$- and $g^{-1}$-twisted modules. The operator $\xi$ to be defined in equation \ref{intertwiner.defn} below is the inverse of $\phi$ used in {\cite[Equation (8.1)]{DLM00}}. Note that in the present setting a vertex algebra $V$ is an $e^{-2\pi i L_0}$-twisted $V$-module. 
\end{rem}

\begin{defn}
A vertex algebra $V$ is said to be \emph{rational} if it has finitely many irreducible positive energy modules, and every positive energy $V$-module decomposes into a direct sum of irreducible positive energy $V$-modules.
\end{defn}

\subsection{Relative Cofiniteness}

We introduce a notion which we call \emph{relative cofiniteness}, generalising the well known \emph{$C_2$-cofiniteness} condition of Zhu \cite{Z96}.
\begin{defn}\label{rel.cofinite}
Let $V = V^0 \oplus V^+$ be a vertex algebra extension of $V^0$ by its module $V^+$. Put
\begin{align*}
C^\text{rel}(V) &= V^0_{(-2)}V^0 + V_{(-1)}V^+ = V_{(-2)}V + V_{(-1)}V^+, \\
R^\text{rel}(V) &= V / C^\text{rel}(V).
\end{align*}
Then we say $V$ is \emph{cofinite} relative to the decomposition $V = V^0 \oplus V^+$ if $\dim R^\text{rel}(V) < \infty$.
\end{defn}
Note that $V^+ \subset C^\text{rel}(V)$, so $R^\text{rel}(V)$ is naturally a quotient of $V^0$. The case $V^+=0$ recovers $C_2$-cofiniteness of $V^0$. On the other hand if $V$ is $C_2$-cofinite then it is cofinite relative to any decomposition $V = V^0 \oplus V^+$.

In this paper we mainly use splittings of the following form: $V^0$ is the fixed point subalgebra of $V$ with respect to a finite order automorphism $g$, and $V^+$ is the sum of the remaining $g$-eigenspaces.
\begin{lemma}\label{tensor.cofinite}
Let $V$ and $W$ be vertex algebras carrying automorphisms of equal order, with $V = V^0 \oplus V^+$ and $W = W^0 \oplus W^+$ the corresponding splittings. If $V$ and $W$ are relatively cofinite, then so is the tensor product $V \otimes W$ with its natural vertex algebra structure and splitting induced by the product automorphism.
\end{lemma}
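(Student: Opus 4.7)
The plan is to exhibit a surjection
\[
R^{\text{rel}}(V) \otimes R^{\text{rel}}(W) \twoheadrightarrow R^{\text{rel}}(V \otimes W), \qquad [a] \otimes [b] \mapsto [a \otimes b],
\]
from which finite-dimensionality of the right-hand side follows at once. Surjectivity is clear since $V \otimes W$ is spanned by pure tensors, so the content is well-definedness, which amounts to the inclusion $C^{\text{rel}}(V) \otimes W + V \otimes C^{\text{rel}}(W) \subseteq C^{\text{rel}}(V \otimes W)$. By symmetry it suffices to treat the first summand, and I will use throughout the alternative presentation $C^{\text{rel}}(X) = X_{(-2)}X + X_{(-1)}X^+$ from Definition \ref{rel.cofinite}, together with the mode identities $(u \otimes \vac)_{(n)}(a \otimes b) = u_{(n)}a \otimes b$ and $(\vac \otimes v)_{(n)}(a \otimes b) = a \otimes v_{(n)}b$.

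The $V_{(-2)}V \otimes W$ contribution is routine: the identity $(u \otimes \vac)_{(-2)}(v \otimes w) = u_{(-2)}v \otimes w$ shows such elements lie in $(V \otimes W)_{(-2)}(V \otimes W) \subseteq C^{\text{rel}}(V \otimes W)$. The harder contribution is $V_{(-1)}V^+ \otimes W$. Here I write $(u_{(-1)}v^+) \otimes w = (u \otimes \vac)_{(-1)}(v^+ \otimes w)$ and decompose $v^+ \otimes w = P + Q$ according to $(V \otimes W)^0 \oplus (V \otimes W)^+$. Then $(u \otimes \vac)_{(-1)} Q \in (V \otimes W)_{(-1)}(V \otimes W)^+ \subseteq C^{\text{rel}}(V \otimes W)$ directly.

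The essential observation is that because the splittings of $V$, $W$, and $V \otimes W$ all arise from eigenspace decompositions of commuting finite-order automorphisms, $P$ is in fact confined to $V^+ \otimes W^+$: writing $v^+ = \sum_{j \neq 0} v^+_j$ and $w = \sum_k w_k$ in $g_V$- and $g_W$-eigenspaces, one finds $P = \sum_{j \neq 0} v^+_j \otimes w_{-j}$, with $-j \neq 0$ automatically. For each such $v^+_j \otimes w_{-j}$ I apply skew-symmetry in $V \otimes W$ to rewrite $(u \otimes \vac)_{(-1)}(v^+_j \otimes w_{-j})$ as $\pm (v^+_j \otimes w_{-j})_{(-1)}(u \otimes \vac)$ modulo $(V \otimes W)_{(-2)}(V \otimes W)$, expand via the tensor-product mode formula, and observe that each resulting summand either carries a factor $T^{m+1}w_{-j}$ with $m \geq 0$ and so lies in $(V \otimes W)_{(-2)}(V \otimes W)$, or else has the form $(v^+_j)_{(-1)} u \otimes w_{-j}$, which a final skew-symmetry application puts in $(V \otimes W)_{(-1)}(V \otimes W)^+$ up to a $(V \otimes W)_{(-2)}(V \otimes W)$ correction. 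The main obstacle is this iterated skew-symmetry bookkeeping; conceptually, the crucial point is the $V^+ \otimes W^+$ confinement of $P$, which is precisely what the hypothesis on commuting automorphisms buys us.
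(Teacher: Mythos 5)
Your overall strategy coincides with the paper's: both proofs establish the inclusion $C^{\text{rel}}(V)\otimes W + V\otimes C^{\text{rel}}(W)\subseteq C^{\text{rel}}(V\otimes W)$ and conclude that $R^{\text{rel}}(V\otimes W)$ is a quotient of $R^{\text{rel}}(V)\otimes R^{\text{rel}}(W)$. Your treatment of $(V_{(-2)}V)\otimes W$ and of the component $Q\in(V\otimes W)^+$ is fine, as is the observation that $P$ is confined to $\sum_{j\neq 0}V^{(j)}\otimes W^{(-j)}$. The gap is in the last step. After your skew-symmetry flip and mode expansion you are left with $(v^+_j)_{(-1)}u\otimes w_{-j}$, and you assert that one more application of skew-symmetry places it in $(V\otimes W)_{(-1)}(V\otimes W)^+$ modulo $(V\otimes W)_{(-2)}(V\otimes W)$. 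But skew-symmetry gives $(v^+_j)_{(-1)}u\equiv\pm\, u_{(-1)}v^+_j \pmod{V_{(-2)}V}$, so modulo $(V\otimes W)_{(-2)}(V\otimes W)$ you recover $\pm\,(u\otimes\vac)_{(-1)}(v^+_j\otimes w_{-j})$ --- exactly the element you started from. The whole manoeuvre is the identity up to $C_2$-terms, and the obstruction is untouched: the second argument $v^+_j\otimes w_{-j}$ still lies in $(V\otimes W)^0$, whereas membership in $(V\otimes W)_{(-1)}(V\otimes W)^+$ requires it to lie in $(V\otimes W)^+$. At this final step you appear to be reading ``$v^+_j\in V^+$'' as placing the second argument in $(V\otimes W)^+$, which is precisely the conflation your $P/Q$ decomposition was introduced to avoid.

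The correct move is not to flip the two arguments of the $(-1)$-product but to redistribute the tensor factors between them. For $v^+\in V^+$ and arbitrary $u\in V$, $w\in W$ one has
\begin{equation*}
(u\otimes w)_{(-1)}(v^+\otimes\vac)\;=\;u_{(-1)}v^+\otimes w\;+\;\sum_{n\geq 1}u_{(n-1)}v^+\otimes\tfrac{1}{n!}T^{n}w,
\end{equation*}
where the tail lies in $V\otimes W_{(-2)}W\subseteq(V\otimes W)_{(-2)}(V\otimes W)$, and $v^+\otimes\vac\in V^+\otimes W^0\subseteq(V\otimes W)^+$ because the product automorphism acts on this vector through $g_V$ alone. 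Hence $(u_{(-1)}v^+)\otimes w\in C^{\text{rel}}(V\otimes W)$ for \emph{every} $w\in W$, with no case distinction on $w$ and no skew-symmetry needed. This is in substance the paper's proof, which phrases it as the containment $(V_{(-1)}V^+)\otimes(W_{(-1)}W^0)\subseteq C^{\text{rel}}(V\otimes W)$ combined with the identity $W_{(-1)}W^0=W$ (valid because $\vac\in W^0$). Your $P/Q$ analysis can also be salvaged by decomposing $u$ into $g_V$-eigencomponents and using $(V\otimes W)^+\subseteq C^{\text{rel}}(V\otimes W)$ (via $\vac_{(-1)}=\mathrm{id}$) for the components of nonzero index, but the zero-index component still forces the redistribution above, so nothing is gained over the paper's one-line argument.
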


\begin{proof}
Recall the tensor product vertex algebra structure is $(v \otimes w)_{(n)} = \sum_{j+k = n-1} v_{(j)} \otimes w_{(k)}$. We have
\begin{align*}
C^\text{rel}(V \otimes W)
&= (V \otimes W)_{(-2)}(V \otimes W) + (V \otimes W)_{(-1)}(V \otimes W)^+ \\
& \supset (V_{(-2)}V) \otimes W + V \otimes (W_{(-2)}W) + (V_{(-1)}V^+) \otimes (W_{(-1)}W^0) + (V_{(-1)}V^0) \otimes (W_{(-1)}W^+) \\
&= (V_{(-2)}V) \otimes W + V \otimes (W_{(-2)}W) + (V_{(-1)}V^+) \otimes W + V \otimes (W_{(-1)}W^+) \\
&= C^\text{rel}(V) \otimes W + V \otimes C^\text{rel}(W).
\end{align*}
Hence $R^\text{rel}(V \otimes W)$ is a quotient of $R^\text{rel}(V) \otimes R^\text{rel}(W)$ which is finite dimensional.
\end{proof}

{
\begin{lemma}
The quotient $R^{\text{rel}}(V)$ is a commutative algebra with product $ab = a_{(-1)}b$.
\end{lemma}

\begin{proof}
In {\cite[Section 4.4]{Z96}} Zhu proved the $V^+=0$ case, i.e., that the product $ab = a_{(-1)}b$ is well defined on the quotient $R(V) = V / V_{(-2)}V$.% Indeed the product $\{a, b\} = a_{(0)}b$ is too, and makes $R(V)$ into a Poisson algebra.

In the general case $R^\text{rel}(V)$ is the quotient of $R(V)$ by the image of $V_{(-1)}V^+$, so it suffices to show that the latter subspace is an ideal. Let $u, v \in V$ and $w \in V^+$. Putting $m=0$, $k = n = -1$ in (\ref{BorcherdsIdentity}) yields
\[
u_{(-1)}(v_{(-1)}w) \equiv (u_{(-1)}v)_{(-1)}w \pmod{V_{(-2)}V}.
\]
\end{proof}
}

\subsection{Trace Functions and Modular Invariance}\label{tr.fct.subsection}

Let $(V, \om)$ be a conformal vertex algebra, and let $M = \bigoplus_{\la} M_\la$ be an irreducible positive energy $V$-module graded by finite dimensional eigenspaces for $L_0$. We define the \emph{supertrace function} of $u \in V$ on $M$ to be
\[
S_M(\tau | u) = \str_M u_0 q^{L_0-c/24} = \sum_\la q^{\la-c/24} \str_{M_\la} u_0,
\]
wherever the right hand side converges.

More generally let $g_1, g_2$ be commuting finite order automorphisms of $(V, \om)$, and let $M, Y^M$ be an irreducible $g_1$-twisted $V$-module. The ``$g_2$-twisted'' action
\[
Y^{g_2 \cdot M}(u, z) = Y(g_2u, z)
\]
of $V$ on $M$ defines a new structure of $g_1$-twisted $V$-module, which we denote $g_2 \cdot M$. If $g_2 \cdot M \cong M$ then we say $M$ is \emph{$g_2$-invariant}, and we are then able to choose an equivalence $\xi = \xi : g_2 \cdot M \rightarrow M$ of $g_1$-twisted $V$-modules. In other words
\begin{align}\label{intertwiner.defn}
g_2(u)_{n} = \xi^{-1} u_{n} \xi \quad \text{for all $u \in V^\eps$, $n \in \eps+\Z$}.
\end{align}
We define the \emph{$g_2$-twisted supertrace function} with respect to $\xi$ of $u \in V^0$ on the $g_2$-invariant $g_1$-twisted positive energy $V$-module $M$ to be
\[
S_{M, g_2, \xi}(\tau | u) = \str_M u_0 \xi q^{L_0 - c/24},
\]
wherever the right hand side converges.

In order to describe modular invariance of supertrace functions we must recall the definition of Zhu's modes.
\begin{defn}\label{ZhuModes}
Let $(V, \om)$ be a conformal vertex algebra (with rational conformal weights), and let $\phi(t) = e^{2\pi i t}-1$. Then
\[
Y[u, z] = Y(e^{2\pi i z L_0} u, \phi(z)), \quad \text{and} \quad \widetilde{\om} = (2\pi i)^2 \left[ \om - \frac{c}{24} \vac \right].
\]
We also write $Y[u, z] = \sum_{n \in \Z} u_{([n])} z^{-n-1}$.
\end{defn}
Explicitly
\begin{align}\label{ZhuModesFormula}
\begin{split}
u_{([n])}
&= \res_\nu \nu^n Y(e^{2\pi i \nu L_0}u, e^{2\pi i \nu}-1) d\nu \\
&= (2\pi i)^{-n-1} \res_\xi [\log(1+\xi)]^n Y\left( (1+\xi)^{L_0 - 1} u, \xi \right) d\xi,
\end{split}
\end{align}
and
\begin{align}\label{ZhuConformalZeroMode}
L_{[0]} = L_0 - \sum_{j=1}^\infty \frac{(-1)^j}{j(j+1)} L_j,
\quad \text{where} \quad Y[\widetilde{\om}, z] = \sum_{n \in \Z} L_{[n]} z^{-n-2}.
\end{align}

If $(V, Y(-, z), \om)$ has integer conformal weights, then $(V, Y[-, z], \widetilde{\om})$ is again a conformal vertex algebra. Indeed the two conformal vertex algebra structures are seen to be isomorphic because of Huang's change of coordinate formula, which we now recall. With $\rho \in \C^\times t + t^2\C[[t]]$ we associate the linear endomorphism $R(\rho)$ of $V$ defined by
\[
R(\rho) = \exp\left( -\sum_{j=1}^\infty v_j L_j\right) v_0^{-L_0}, \quad \text{where} \quad \rho(t) = \exp\left( \sum_{j=1}^\infty v_j t^{j+1} \partial_t \right) v_0^{t\partial_t} \cdot t.
\]
For $z \in \C$ the series $\rho_z \in \C^\times t + t^2\C[[t]]$ is defined by $\rho_z(t) = \rho(z+t)-\rho(z)$. Huang \cite{HuangConfBook} proved the formula
\[
Y(u, z) = R(\rho) Y(R(\rho_z)u, \rho(z)) R(\rho)^{-1}
\]
which is basic to the geometric approach to vertex algebras explained in \cite{FBZ04}. If we take $\phi$ as in Definition \ref{ZhuModes} and put $R = R(\phi)$ then
\[
Y[u, z] = R^{-1} Y(R^{-1}u, z) R.
\]
So indeed $V, Y[-, z]$ is isomorphic to $V, Y(-, z)$ via $R$. One easily checks $\widetilde{\om} = R \om$.

We now recall the main theorem of {\cite{JVE13}}, which is a generalisation of Dong, Li, and Mason's {\cite[Theorem 1.3]{DLM00}} to the case of vertex (super)algebras graded by rational conformal weights.
\begin{thm}[{\cite[Theorem 1.3 and Remark 5.2]{JVE13}}]\label{JVEtheorem}
Let $(V, \om)$ be a $\Q$-graded conformal vertex algebra and let $G = \left<g_1\right>$ be a cyclic group of automorphisms of $(V, \om)$ of finite order $N$.
\begin{itemize}
\item Let $V^G$ denote the $G$-invariant subalgebra of $V$, and $W$ the direct sum of the nontrivial eigenspaces of $g_1$. Suppose $V = V^G \oplus W$ is relatively cofinite. Let $g_2 \in G$ and let $M$ be a $g_2$-invariant irreducible positive energy $g_1$-twisted $V$-module. Then for each $u \in V^G$ the series defining $S_{M, g_2, \xi}$ converges absolutely and uniformly on compact sets to a holomorphic function of $\tau \in \HH$.

\item Suppose further that $V$ is $g$-rational for each $g \in G \backslash \{1\}$. For $i, j \in \Z / N\Z$ with $i \neq 0$, let $X(i, j)$ denote the \textup{(}finite\textup{)} set of irreducible $g_1^j$-invariant $g_1^i$-twisted $V$-modules, and let $\CC(i, j)$ denote the vector space spanned by $S_{M, g_1^j, \xi} : V^G \times \HH \rightarrow \C$ as $M$ ranges over $X(i, j)$. If $(i', j') = (i, j) \cdot A$ where $A = \twobytwo abcd \in SL_2(\Z)$, then under the action
\[
[S \cdot A](\tau, u) := S\left( \frac{a\tau+b}{c\tau+d} \bigg| (c\tau+d)^{-L_{[0]}} u \right)
\]
the vector space $\CC(i, j)$ is mapped isomorphically to $\CC(i', j')$.
\end{itemize}
\end{thm}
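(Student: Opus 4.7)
My strategy extends Zhu's \cite{Z96} and Dong--Li--Mason's \cite{DLM00} arguments to the relatively cofinite, rationally graded, twisted setting. The technical backbone is a \emph{relative Zhu algebra} $A^{\text{rel}}(V)$ attached to the decomposition $V = V^G \oplus W$, whose representations should be in bijection with the top components of irreducible $g_1^i$-twisted $V$-modules (with appropriate $g_2$-equivariance built in). The hypothesis $\dim R^{\text{rel}}(V) < \infty$ plays the role of $C_2$-cofiniteness: a PBW-type filtration on $A^{\text{rel}}(V)$ has associated graded a quotient of $R^{\text{rel}}(V)$, forcing finite-dimensionality in each relevant component of $A^{\text{rel}}(V)$.

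\textbf{Convergence (first bullet).} For $u \in V^G$, I would derive a relative Zhu recursion by pushing each mode $u_{(n)}$ with $n \leq -2$ past the operator $\xi q^{L_0}$ using the trace property and the commutator formula \eqref{commutator.formula}. The coefficients that appear are twisted Eisenstein series encoding both the $g_1$-grading of $V$ and the $g_2$-eigenvalues. Iterating the reduction allows one to replace $u$ by its class in $V / (V^G_{(-2)}V^G + V_{(-1)}W) = R^{\text{rel}}(V)$; by relative cofiniteness only finitely many independent trace functions remain. Standard Frobenius theory applied to the resulting $q$-ODE with regular-singular point at $q=0$ (where the positive-energy condition controls the leading behaviour) then delivers absolute convergence on $\HH$ and joint holomorphicity in $(u, \tau)$.

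\textbf{Modular invariance (second bullet).} I would assemble the total trace function space $\bigoplus_{i \neq 0, j} \CC(i, j)$ and show that the prescribed action of $A = \twobytwo{a}{b}{c}{d} \in SL_2(\Z)$ sends $\CC(i, j)$ into $\CC(i', j')$ with $(i', j') = (i, j) \cdot A$. The critical observation is that the relative Zhu recursion of the previous step is $SL_2(\Z)$-covariant: the twisted Eisenstein series transform as modular forms of the relevant weights, and the insertion $u \mapsto (c\tau+d)^{-L_{[0]}} u$ is exactly what one obtains from conjugating by Huang's change-of-coordinates operator $R = R(\phi)$ intertwining the ordinary structure $Y(-, z)$ with the square-bracket structure $Y[-, z]$ of Definition \ref{ZhuModes}. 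It suffices to check the generators $T = \twobytwo{1}{1}{0}{1}$ and $S = \twobytwo{0}{-1}{1}{0}$: under $T$ the factor $q^{L_0-c/24}$ picks up $e^{2\pi i(L_0 - c/24)}$ which is absorbed into the intertwiner, shifting the $g_2$-invariance from $g_1^j$ to $g_1^{i+j}$; under $S$ one produces enough solutions of the recursion in the $(-j, i)$ sector and, using linear independence of the $\{S_{M', g_1^i, \xi'}\}_{M' \in X(-j, i)}$, identifies $S_{M, g_2, \xi}(-1/\tau \mid (-1/\tau)^{-L_{[0]}} u)$ with the corresponding linear combination.

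\textbf{Main obstacle.} The hardest aspect is the simultaneous management of rational gradings, the twist data, and the relative decomposition. Because $V$ is only $\Q$-graded, the operator $e^{-2\pi i L_0}$ is itself a nontrivial vertex-algebra automorphism, and $V$ is implicitly an $e^{-2\pi i L_0}$-twisted module over itself; a genuinely $g_1^i$-twisted module must therefore be reinterpreted inside this enlarged twist framework before any of the integer-graded machinery of \cite{DLM00} can be invoked. Keeping precise track of which $g_1$-power labels the twist sector and which labels the invariance used to define $\xi$, and verifying that $T$ and $S$ shift these labels consistently modulo $N$, is the main combinatorial burden; once this bookkeeping is pinned down, the rest of the argument is a direct adaptation of the integer-graded, $C_2$-cofinite prototype to the relative setting.
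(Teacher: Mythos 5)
The paper does not prove this statement: it is imported verbatim from \cite{JVE13} (Theorem 1.3 and Remark 5.2 there), so there is no internal proof to compare against. Judged as a reconstruction of the cited argument, your outline is on target: the proof in \cite{JVE13} does proceed by a twisted Zhu-style recursion whose coefficients are twisted Eisenstein series depending on the $(g_1^i, g_1^j)$-data, uses relative cofiniteness of $V = V^G \oplus W$ to reduce insertions modulo $V^G_{(-2)}V^G + V_{(-1)}W$ and obtain a finite-rank system of modular linear ODEs with regular singular point at $q=0$, deduces convergence by Frobenius theory, and establishes covariance via the square-bracket structure (whence the $(c\tau+d)^{-L_{[0]}}$ insertion) by checking the generators $S$ and $T$. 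You have also correctly isolated the genuinely new difficulty relative to \cite{DLM00}, namely that for $\Q$-graded $V$ the operator $e^{-2\pi i L_0}$ is a nontrivial automorphism and $V$ is an $e^{-2\pi i L_0}$-twisted module over itself, so the twist labels must be tracked through the $SL_2(\Z)$ action; this is precisely the content of Remark 5.2 of \cite{JVE13}.

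Two caveats. First, your sketch leaves unexecuted the step on which the second bullet actually rests: showing that $\CC(i,j)$ is mapped \emph{isomorphically} onto $\CC(i',j')$ requires identifying the span of the functions $S_{M, g_1^j, \xi}$ with the full solution space of the differential system in each sector, and this is where $g$-rationality enters essentially (via the twisted Zhu algebra, to enumerate the irreducible $g_1^i$-twisted modules and match dimensions across the $SL_2(\Z)$-orbit). Asserting "linear independence of the $S_{M', g_1^i, \xi'}$" is not enough; one must also produce surjectivity onto the target sector, and that is the substantive content of the theorem rather than bookkeeping. Second, a small sign slip: with the right action $(i',j') = (i,j)\cdot A = (ia+jc,\ ib+jd)$, the generator $S = \twobytwo{0}{-1}{1}{0}$ sends $(i,j)$ to $(j,-i)$, not to $(-j,i)$; since you flagged exactly this bookkeeping as the main combinatorial burden, it is worth getting the convention pinned down before relying on it.
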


\begin{rem}
If $V^G$ is $\Z$-graded (as will be the case in this article), then the action of $SL_2(\Z)$ on twisted supertrace functions is a representation.
\end{rem}

\subsection{Li's Operators}\label{subsec.shift}

Let $(V, \om)$ be a $\Z$-graded conformal vertex algebra. Let $h \in V_1$ be an even vector satisfying the Heisenberg $\la$-bracket relation
\begin{align}\label{Li.section.lambda.relation}
[h_\la h] = \text{(const.)} \la
\end{align}
(the value of the constant is unimportant at the moment). Suppose further that $h_{(0)}$ acts semisimply on $V$ with eigenvalues in a lattice, i.e., a discrete subset, $Q \subset (\Q h)^*$.

We now recall the operator series $\D(h, z)$ used by Li to `shift' between differently twisted $V$-modules.
\begin{defn}\label{shift.operator.definition}
For $h \in V$ as above, let
\begin{align}\label{shift.operator.fla}
\D(h, z) = z^{h_{(0)}} \exp \sum_{k = 1}^\infty \frac{(-z)^{-k}}{-k} h_{(k)}.
\end{align}
This expression makes sense on untwisted $V$-modules, and more generally on $g$-twisted $V$-modules whenever $h \in V^g$. The \emph{shifted module} $h * M$ of a $V$-module $M$ is defined to be $M$ as a vector space, equipped with the vertex operation
\[
Y^{h * M}(u, z) = Y^M(\D(h, z) u, z).
\]
\end{defn}
The following theorem is due to Li (under conditions weaker than (\ref{Li.section.lambda.relation}) actually).
\begin{thm}[\cite{Li95}, Proposition 5.4]\label{LiTheorem}
Let $g$ be a finite order automorphism of $V$, let $h \in V_1^g$ be as above, and let $M$ be a $g$-twisted $V$-module. Then $h * M$ is a $g e^{-2\pi i h_0}$-twisted $V$-module.
\end{thm}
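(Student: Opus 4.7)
The plan is to verify directly that $(h*M, Y^{h*M})$ satisfies the axioms of Definition \ref{twisted.mod.def} for a $ge^{-2\pi i h_0}$-twisted $V$-module. First I would check that $ge^{-2\pi i h_0}$ is a bona fide finite-order automorphism of $V$: the Heisenberg relation (\ref{Li.section.lambda.relation}) together with the commutator formula (\ref{commutator.formula}) makes $h_{(0)}$ a derivation of $V$, so $e^{-2\pi i h_0}$ is an automorphism commuting with $g$ (by $gh = h$), and has finite order because $h_{(0)}$ acts semisimply with eigenvalues in the discrete set $Q$.

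The vacuum axiom is immediate from $h_{(k)}\vac = 0$ for $k \geq 0$, which gives $\D(h, z)\vac = \vac$ and hence $Y^{h*M}(\vac, z) = Y^M(\vac,z) = 1_M$. For the eigenspace/monodromy axiom, take a joint eigenvector $u$ of $g$ and $h_{(0)}$ with eigenvalues $e^{2\pi i \eps}$ and $\alpha \in Q$ respectively, so that $u$ is an eigenvector of $ge^{-2\pi i h_0}$ with eigenvalue $e^{2\pi i(\eps - \alpha)}$. Formula (\ref{shift.operator.fla}) displays $\D(h, z)u$ as $z^\alpha$ times a polynomial in $z^{-1}$ whose coefficients lie in $V^\eps \cap \ker(h_{(0)}-\alpha)$ (the exponential truncates on any finite-weight vector because $h_{(k)}$ for $k \geq 1$ strictly lowers conformal weight and $V$ is bounded below). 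Applying $Y^M$ termwise and using the $g$-twisted structure of $M$, the powers of $z$ appearing in $Y^{h*M}(u, z)$ lie in $z^{\alpha - \eps - \D(u) + \Z}$, precisely those demanded by the $ge^{-2\pi i h_0}$-twist.

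The real work is the Borcherds identity, and this is the step I expect to be the main obstacle. The strategy is to first establish a conjugation identity on $V$ itself, schematically
\begin{align*}
\D(h, z_0)\, Y^V(u, z_1)\, \D(h, z_0)^{-1} = Y^V\bigl(\D(h, z_0 + z_1)\D(h, z_0)^{-1} u,\; z_1\bigr),
\end{align*}
and then transport it to the module. This identity holds because the Heisenberg relation (\ref{Li.section.lambda.relation}) fully determines $[h_{(j)}, u_{(n)}]$ for all $j, n$, whence one can compute in closed form how the factor $\exp \sum_{k\geq 1} \frac{(-z_0)^{-k}}{-k} h_{(k)}$ conjugates a mode $u_{(n)}$, while the prefactor $z_0^{h_{(0)}}$ supplies the correct monodromy. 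Substituting this identity into the $g$-twisted Borcherds identity satisfied by $Y^M$ converts it, after tracking the $z^{\alpha}$ prefactors and the derivation property of $T$, into the $ge^{-2\pi i h_0}$-twisted Borcherds identity for $Y^{h*M}$. I would follow Li's formal power-series calculation in \cite{Li95} for this last step rather than reproduce it in detail.
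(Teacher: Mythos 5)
The paper gives no proof of this statement at all --- it is quoted verbatim from Li's paper --- so the only comparison available is with Li's argument, and your outline does have the right overall shape: check that $ge^{-2\pi i h_0}$ is a finite-order automorphism, verify the vacuum and monodromy axioms directly (your computations there are correct, including the bookkeeping showing the powers of $z$ land in $z^{\alpha-\eps-\D(u)+\Z}$), and reduce the twisted Borcherds identity for $Y^{h*M}$ to a conjugation identity for $\D(h,z)$ on $V$. However, the key identity is wrong as you have written it, and the mechanism you offer for it is also wrong. The correct identity is
\begin{align*}
\D(h, z_0)\, Y(u, z_1)\, \D(h, z_0)^{-1} \;=\; Y\bigl(\D(h, z_0 + z_1)\,u,\; z_1\bigr),
\end{align*}
with \emph{no} factor $\D(h,z_0)^{-1}$ acting on $u$. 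Your version fails already for $u=h$: writing $h_{(1)}h=\gamma\vac$, the Heisenberg relation gives in one line $\D(h,z_0)Y(h,z_1)\D(h,z_0)^{-1}=Y(h,z_1)+\gamma(z_0+z_1)^{-1}$, which equals $Y(\D(h,z_0+z_1)h,z_1)$ by (\ref{AppliedShifts1}), whereas $Y(\D(h,z_0+z_1)\D(h,z_0)^{-1}h,z_1)=Y(h,z_1)+\gamma\left[(z_0+z_1)^{-1}-z_0^{-1}\right]$ carries a spurious constant term. Likewise, for an $h_{(0)}$-eigenvector of eigenvalue $\alpha\neq 0$ the two sides differ by the factor $z_0^{-\alpha}$. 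Since this identity is exactly what converts the $g$-twisted iterate $Y^M(\D(h,z_2)Y(u,z_0)v,z_2)$ into $Y^M(Y(\D(h,z_2+z_0)u,z_0)\D(h,z_2)v,z_2)$, carrying out your ``last step'' with the identity as stated would not close up.

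The second problem is your claim that the Heisenberg relation ``fully determines $[h_{(j)},u_{(n)}]$ for all $j,n$.'' It does not: by the commutator formula (\ref{commutator.formula}), $[h_{(j)},u_{(n)}]=\sum_{i}\binom{j}{i}(h_{(i)}u)_{(j+n-i)}$, and for general $u$ the products $h_{(i)}u$ are whatever the vertex algebra structure makes them --- relation (\ref{Li.section.lambda.relation}) only constrains $h_{(i)}h$. Its actual role is different: it makes the operators $h_{(k)}$, $k\geq 1$, commute with one another and with $h_{(0)}$, so that $\D(h,z)$ is a well-defined invertible operator with a sensible composition law, and it pins down $\D(h,z)h$ and $\D(h,z)\om$ as in (\ref{AppliedShifts1})--(\ref{AppliedShifts2}). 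The conjugation identity for general $u$ is then proved not from a closed-form commutator but from the Jacobi identity itself (this is the content of Li's computation). With the identity corrected and that computation genuinely delegated to \cite{Li95}, the rest of your outline stands --- which is consistent with the paper, which simply cites Li here.
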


For later use we recall some special cases of the action of shift operators. Suppose $h, h' \in V_1$ satisfy $h_{(0)}h' = 0$. Then we have
\begin{align}
\D(h, z)h' &= h' + \left<h, h'\right> \vac z^{-1} \label{AppliedShifts1} \\
\text{and} \quad \D(h, z)\om &= \om + h z^{-1} + \frac{1}{2} \left<h, h\right> \vac z^{-2}. \label{AppliedShifts2}
\end{align}
Let us write
\[
Y(\D(h, z)u, z) = \sum_{n \in \Z} \widehat{u}_n z^{-n-\D(u)}.
\]
One easily verifies that
\begin{align}\label{zeromode.shift}
\widehat{u}_n = [\D(h, 1)u]_0 = \left[ \exp \left( \sum_{n=1}^\infty \frac{(-1)^k}{-k} h_{(k)} \right) u \right]_0
\end{align}
whenever $h_{(0)}u = 0$. Hence
\begin{align}
\widehat{h}'_0 &= h'_0 + \left<h, h'\right> \\
\text{and} \quad
\widehat{L}_0 &= L_0 + h_0 + \frac{1}{2}\left<h, h\right>.
\end{align}

\begin{lemma}\label{twist=shift}
Let $h \in V_1$ be as above, and let $M$ be an $\exp\left(-2\pi i h_0\right)$-twisted $V$-module. Then
\begin{enumerate}
\item The module $M$ may be written as $h * M^0$ for some untwisted $V$-module $M^0$,

\item Let $\eps \in \Q$ and consider the automorphism $g_2 = \exp\left(-2\pi i \eps h_0\right)$ of $V$, and $\xi = \exp\left(+2\pi i \eps h_0\right)$ considered as an automorphism of $M$ via the identification $M \cong h * M^0$ above. Then \textup{(}\ref{intertwiner.defn}\textup{)} is satisfied. In particular $M$ is a $g_2$-invariant module.
\end{enumerate}
\end{lemma}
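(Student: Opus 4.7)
My plan is to exploit the invertibility of Li's shift operator $\D(h,z)$ for part (1), and to derive the intertwiner relation in part (2) from the basic commutator formula.

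First I would establish $\D(-h, z) = \D(h, z)^{-1}$. The hypothesis $[h_\la h] = (\text{const})\la$ implies via (\ref{commutator.formula}) that $[h_{(m)}, h_{(n)}] = 0$ for all $m, n \geq 0$. Hence the exponential factors appearing in $\D(h, z)$ and $\D(-h, z)$ commute with each other and with $z^{\pm h_{(0)}}$, and their product telescopes to the identity. For part (1), set $M^0 := (-h) * M$. The relation $h_{(0)} h = 0$ shows that $g := e^{-2\pi i h_0}$ fixes $h$, so $-h \in V_1^g$; Theorem \ref{LiTheorem} then applies, producing a $g \cdot e^{2\pi i h_0} = 1$-twisted (i.e.\ untwisted) $V$-module structure on $M^0$. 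The identity $\D(h,z)\D(-h,z) = 1$ gives
\[
Y^{h * M^0}(u, z) = Y^{M^0}(\D(h, z) u, z) = Y^M(\D(-h, z)\D(h, z) u, z) = Y^M(u, z),
\]
so $h * M^0 = M$ as $g$-twisted modules.

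For part (2), I would show that on $M$ every mode $u_n^M$ of an $h_{(0)}$-eigenvector $u \in V$ of eigenvalue $\al$ satisfies $[h_0^{M^0}, u_n^M] = \al u_n^M$. On $M^0$ this is immediate from (\ref{commutator.formula}): $[h_0^{M^0}, u_{(n)}^{M^0}] = (h_{(0)} u)_{(n)}^{M^0} = \al u_{(n)}^{M^0}$. Since every $h_{(k)}$ with $k \geq 0$ commutes with $h_{(0)}$, each coefficient of $\D(h, z) u$ is again an $h_{(0)}$-eigenvector of eigenvalue $\al$, so expanding $Y^M(u, z) = Y^{M^0}(\D(h, z) u, z)$ transports the relation to the modes on $M$. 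Exponentiating, and noting that the shift between $h_0^M$ and $h_0^{M^0}$ predicted by (\ref{AppliedShifts1}) is a central scalar $\langle h, h\rangle$ which drops out under conjugation, yields
\[
\xi^{-1} u_n^M \xi = e^{-2\pi i \eps \al} u_n^M = (g_2 u)_n^M,
\]
which is precisely (\ref{intertwiner.defn}).

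The only subtlety is the bookkeeping around the central constant shift between the zero modes on $M$ and $M^0$, but this disappears upon conjugation by $\xi$, so no further work is needed.
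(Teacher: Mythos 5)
Your proposal is correct and follows essentially the same route as the paper: part (1) by inverting Li's shift (the paper simply cites Theorem \ref{LiTheorem}, and your construction $M^0 = (-h)*M$ with $\D(-h,z)=\D(h,z)^{-1}$ is exactly the implicit content of that citation), and part (2) by the observation that $\D(h,z)$ preserves $h_{(0)}$-eigenspaces together with the exponentiated commutator formula on the untwisted module $M^0$, which is precisely the paper's chain of equalities written at the level of modes. Your remark that the scalar discrepancy $\langle h,h\rangle$ between $h_0^M$ and $h_0^{M^0}$ drops out under conjugation is a correct and worthwhile point of bookkeeping.
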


\begin{proof}
Part (1) is an immediate consequence of Theorem \ref{LiTheorem}. Part (2) is a simple computation. Indeed $\exp\left(-2\pi i \eps h_0\right)$ commutes with $\D(h, z)$, so we have
\begin{align*}
Y^{M}(e^{-2\pi i \eps h_0} u, z)
&= Y^{M^0}(\D(h, z) e^{-2\pi i \eps h_0} u, z)
= Y^{M^0}(e^{-2\pi i \eps h_0} \D(h, z) u, z) \\
&= e^{-2\pi i \eps h_0} Y^{M^0}(\D(h, z) u, z) e^{+2\pi i \eps h_0}
= e^{-2\pi i \eps h_0} Y^{M}(u, z) e^{+2\pi i \eps h_0}.
\end{align*}
Thus $\xi = \exp\left(+2\pi i \eps h_0\right)$ provides the intertwining map that we need.
\end{proof}

{
For $h \in V_1$ it is convenient to denote
\[
I(h) = \sum_{j=1}^\infty \frac{(-1)^j}{-j} h_j.
\]
Observe that
\begin{align}\label{compareZhuLi}
I(h) = (2\pi i)^2 h_{([1])} \quad \text{and} \quad \D(h, 1) = \exp{I(h)}.
\end{align}
}

\section{Preliminaries on Lie Algebras}\label{liealgebras}

\subsection{Lie Algebras and Affine Vertex Algebras}

Let $\ov{\g}$ be a finite dimensional simple Lie algebra over $\C$ of rank $\ell$. We fix a Cartan subalgebra $\ov{\h}$ and a triangular decomposition $\ov{\g} = \ov{\n}_- \oplus \ov{\h} \oplus \ov{\n}_+$, with Borel subalgebra $\ov{\mb} = \ov{\h} + \ov{\n}_+$. We then have the set $\ov{\D} \subset \ov{\h}^*$ of roots, and its subsets $\ov{\D}_+$ of positive roots, and $\ov{\Pi} = \{\ov{\al}_1, \ldots, \ov{\al}_\ell\}$ of simple roots. We denote by $\ov{Q}$ the root lattice $\Z \ov{\D}$.

There is a unique up to scaling nondegenerate invariant bilinear form on $\ov\g$, which induces a form on $\ov\h^*$. The roots come in one or two norms, and the \emph{lacing number} $r^\vee \in \{1, 2, 3\}$ is the ratio between these norms. We denote by $\ov\D_{\text{long}}$ (resp.\ $\ov\D_{\text{short}}$) the set of long (resp.\ short) roots, and we normalise the form $(\cdot, \cdot)$ so that the long roots have norm $2$. We denote by $\nu$ the corresponding identification $\ov{\h} \rightarrow \ov{\h}^*$.

We let $\ov{\theta}$ denote the highest root with respect to the height function $\height : \sum_i k_i \ov{\al}_i \mapsto \sum k_i$ on $\ov{Q}$. It is a long root. We similarly denote by $\ov\theta_{\text{short}}$ the highest of the short roots.

The simple coroots $\ov{\al}_i^\vee \in \ov\h$ are by definition $\ov{\al}_i^\vee = 2 \nu^{-1}(\ov{\al}_i) / (\ov{\al}_i, \ov{\al}_i)$. We denote by $\ov{Q}^\vee$ the coroot lattice $\Z \ov{\D}^\vee$. The coroots come in one or two norms, namely $2$ and $2r^\vee$. We denote by $\ov\D_{\text{long}}^\vee$ (resp.\ $\ov\D_{\text{short}}^\vee$) the set of long (resp.\ short) coroots. We let $\ov{\theta}^\vee = \ov{\theta}^\vee_{\text{long}}$ denote the highest coroot with respect to the height function $\height^\vee : \sum_i k_i \ov{\al}_i^\vee \mapsto \sum k_i$ on $\ov{Q}^\vee$, and $\ov\theta_{\text{short}}^\vee$ the highest of the short coroots. In fact $\ov{\theta}^\vee_{\text{short}} = \nu^{-1}(\ov{\theta})$ and $\ov{\theta}^\vee = r^\vee \nu^{-1}(\ov{\theta}_{\text{short}})$.

The weight lattice $\ov{P} \subset \ov{\h}^*$ is the natural dual of $\ov{Q}^\vee \subset \ov\h$. The fundamental weights $\ov{\La}_1, \ldots, \ov{\La}_\ell$, which form a basis of $\ov P$, are by definition dual to the simple coroots. Similarly the fundamental coweights $\ov{\La}^\vee_i$ are dual to the simple roots $\ov{\al}_i$. We define $\ov{P}_+ = \Z_+ \{\ov{\La}_1, \ldots, \ov{\La}_\ell\}$.

The marks $a_i$ and comarks $a_i^\vee$ ($i = 1, \ldots, \ell$) are defined by $\ov{\theta} = \sum_i a_i \ov{\al}_i$, and $\nu^{-1}(\ov{\theta}) = \sum_i a_i^\vee \ov{\al}_i^\vee$. The dual Coxeter number is $h^\vee = 1 + \sum_i a_i^\vee$. We have the relation $a_i \ov{\al}_i = a_i^\vee \nu(\ov{\al}_i^\vee)$. The Weyl vector is $\ov{\rho} = \sum_{i=1}^\ell \ov{\La}_i = \frac{1}{2}\sum_{\ov{\al} \in \ov\D_+} \ov{\al}$, and the dual Weyl vector is $\ov{\rho}^\vee = \sum_{i=1}^\ell \ov{\La}_i^\vee = \frac{1}{2}\sum_{\ov{\al} \in \ov\D_+} \ov{\al}^\vee$. Clearly $\heit\al = \al(\ov\rho^\vee)$.

%{QUESTION: Is $\rho(h) = (\rho^\vee, h)$. ANSWER: No. Just look at $G_2$. If the answer had been yes I could simplify some formulas later. But it isn't.}

The finite Weyl group $\ov{W}$ is the subgroup of $\aut{\ov{\h}^*}$ generated by reflections $s_i : \ov\la \mapsto \ov{\la} - 2 \left<\ov{\la}, \ov{\al}_i^\vee\right> \ov{\al}_i$ in simple roots.

%ALWAYS $Q \subset P$, NEITHER IS INTEGRAL IN GENERAL. FOR $G_2$ IT HAPPENS THAT $P = Q$.

The (untwisted) affine Kac-Moody algebra associated with $\ov{\g}$ as above is
\begin{align*}
\g &= \ov{\g}[t, t^{-1}] \oplus \C K \oplus \C d \\
[a_m, b_n] &= [a, b]_{m+n} + m \delta_{m, -n} (a, b) K, \quad [K, \g] = 0, \quad [d, a_m] = m a_m,
\end{align*}
where $a_m$ denotes $at^m$. We extend $(\cdot, \cdot)$ to $\h = \ov{\h} + \C K + \C d$ by declaring $(K, \ov\h) = (d, \ov\h) = (K, K) = (d, d) = 0$, and $(K, d)=1$. We denote the restriction of $\la \in \h^*$ to $\ov{\h}$ by $\ov{\la}$. We extend $\nu$ to $\h$ using $(\cdot, \cdot)$ and we write $\delta = \nu(K)$, $\La_0 = \nu(d)$. We also set $\al_0 = \delta - \ov\theta$ and $\al_0^\vee = K - \ov\theta_{\text{short}}^\vee$. The level of a weight $\la$ is $\left<\la, K\right>$.

The affine fundamental weights (dual to the coroot basis $\Pi^\vee = \ov\Pi^\vee \cup \{\al_0^\vee\}$) are $\La_0$ together with $\La_i = \ov{\La}_i + a_i^\vee \La_0$ for $i = 1, \ldots, \ell$. Put $\rho = \sum_{i=0}^\ell \La_i$, so that all $\left<\rho, \al_i^\vee\right> = 1$ and $\left<\rho, K\right> = h^\vee$. We write $P = \Z\{\La_i\}$ the affine weight lattice, as well as $P_+ = \Z_+\{\La_i\}$ and $P_{++} = \Z_{\geq 1}\{\La_i\}$. Then $P^k$, $P_+^k$ and $P_{++}^k$ denote their respective subsets of weights of level $k$. We have $P^k = k\La_0 + \ov P$,  similarly we define $Q^{*, k} = k\La_0 + \ov{Q}^* \subset P^k$ (where $\ov Q^*$ is the dual of $\ov Q$ with respect to $(\cdot, \cdot)$).

Let $\ov{W}$ act trivially on $\delta$ and $\La_0$. Any element $\al \in \ov{\h}$ acts on ${\h}^*$ via
\[
t_\al : \la \mapsto \la + \la(K)\al - \left((\la, \al) + \tfrac{1}{2}(\al, \al)\la(K) \right)\delta.
\]
%Misprint in FKW here, but the formula is correct in Kac's book.
The affine Weyl group is the semidirect product $W = \ov{W} \ltimes t_{\ov{Q}^\vee}$. The coroot system $\D^\vee$ is actually invariant under the larger \emph{extended affine Weyl group} $\widetilde W = \ov W \ltimes t_{Q^*}$. Let $\widetilde W_+$ be the subgroup of automorphisms that preserve the coroot basis $\Pi^\vee$. Explicitly $\widetilde W_+ = \{\sigma_j\}_{j \in J}$, where $J$ is the set of indices $i \in \{1, \ldots, \ell\}$ such that $a_i = 1$, where $\sigma_j = t_{\ov\La_j}\ov\sigma_j$, and where $\ov\sigma_j \in \ov W$ is as in Definition \ref{sigma.remark} below. We record that $\widetilde W_+ \cong \widetilde W / W \cong Q^* / Q^\vee$.

The set of positive real roots of $\g$ is $\D_+^{\text{re}} = \ov{\D}_+ \cup \{\ov{\al} + n\delta | \ov{\al} \in \ov{\D}, n \in \Z_{>0} \} \subset {\h}^*$, and $\D^{\text{re}} = \D_+^{\text{re}} \cup -\D_+^{\text{re}}$. The set of positive real coroots is
\[
\D_+^{\vee \, \text{re}} = \ov{\D}_+^\vee \cup \{\ov{\al} + nK | \ov{\al} \in \ov{\D}^\vee_{\text{short}}, n \in \Z_{>0} \} \cup \{\ov{\al} + n r^\vee K | \ov{\al} \in \ov{\D}^\vee_{\text{long}}, n \in \Z_{>0} \}.
\]

We now record some material on twisted root systems and the Langlands dual to be used in Section \ref{section.coprin.Smatrix}.

Let $\ov\g$, $\g$, etc. be as above, but now suppose $r^\vee > 1$. We introduce the affine root system of twisted type ${}^\circ\D^\vee$ in $\h^*$ as follows:
\[
{}^\circ\D_+^{\text{re}} = \ov{\D}_+ \cup \{\ov{\al} + \tfrac{n}{r^\vee}\delta | \ov{\al} \in \ov{\D}_{\text{short}}, n \in \Z_{>0} \} \cup \{\ov{\al} + n \delta | \ov{\al} \in \ov{\D}_{\text{long}}, n \in \Z_{>0} \}.
\]
The associated coroot system is determined by
\[
{}^\circ\D^{\vee \, \text{re}}_+ = \ov{\D}_+^\vee \cup \{\ov{\al} + nK | \ov{\al} \in \ov{\D}^\vee, n \in \Z_{>0} \}.
\]
We define $\{{}^\circ\La_i\}$ to be the dual basis to $\{{}^\circ\al_i^\vee\}$ and ${}^\circ P_+ = \Z_+\{{}^\circ\La_i\}$. We also define ${}^\circ\rho = \sum_{i=0}^\ell {}^\circ\La_i$, so that $\left<{}^\circ\rho, {}^\circ\al_i^\vee\right> = 1$ for $i = 0,1,\ldots, \ell$.

In the following table we record the types of $\D$ and ${}^\circ\D$.
\[
\renewcommand{\arraystretch}{1.2}
\begin{array}{c|c|c}
\ov \D & \D & {}^\circ\D \\
[0.5mm] \hline 
& & \\
[-4mm]
B_\ell & B_\ell^{(1)} & D_{\ell+1}^{(2)}  \\
C_\ell & C_\ell^{(1)} & A_{2\ell-1}^{(2)}  \\
F_4 & F_4^{(1)} & E_6^{(2)} \\
G_2 & G_2^{(1)} & D_4^{(3)}  \\
\end{array}
\]
Note that the normalisation of $(\cdot,\cdot)$ we have adopted for the twisted root system ${}^\circ\D$ differs from that used in {\cite[Section 6.4]{KacIDLA}}; there the roots have norms $2$ and $2r^\vee$.

The Weyl group of ${}^\circ\D$ is ${}^\circ W = \ov{W} \ltimes t_{\ov{Q}}$, and ${}^\circ\D^\vee$ is invariant under the extended affine Weyl group
\[
{}^\circ\widetilde{W} = \ov W \ltimes t_{P}.
\]
Let ${}^\circ\widetilde W_+$ be the subgroup of automorphisms that preserve the coroot basis ${}^\circ\Pi^\vee = \ov\Pi^\vee \cup \{K - \ov\theta^\vee_{\text{long}}\}$. Explicitly ${}^\circ\widetilde W_+ = \{\sigma_j\}_{j \in {}^LJ}$, where ${}^LJ$ is the set of indices $i \in \{1, \ldots, \ell\}$ such that ${}^La_i = 1$, where $\sigma_j = t_{\ov\La_j}{}^L\ov\sigma_j$, and where ${}^L\ov\sigma_j \in \ov W$ is as in Definition \ref{sigma.remark} below. We record that ${}^\circ\widetilde W_+ \cong {}^\circ\widetilde W / {}^\circ W \cong P / Q$.

We recall the \emph{Langlands dual} ${}^L\ov\D$ to the finite root system $\ov\D$. Explicitly
\[
{}^L\ov\D = \{ {}^L\al | \al \in \ov\D \} \quad \text{where} \quad {}^L\al := \frac{2}{\sqrt{r^\vee}(\al, \al)} \al.
\]
\begin{defn}\label{sigma.remark}
If $a_j=1$ then the set $\{-\ov\theta, \ov\al_1, \ldots, \ov\al_\ell\} \backslash \{\ov\al_j\}$ is a root basis of $\ov\D$. The Weyl group acts simply transitively on root bases. Define $\ov \sigma_j$ to be the unique element of $\ov W$ that sends $-\ov\theta$ to $\ov\al_j$, and permutes the simple roots other than $\ov\al_j$. If ${}^La_j=1$ then $\{-{}^L\ov\theta_{\text{short}}, {}^L\ov\al_1, \ldots, {}^L\ov\al_\ell\} \backslash \{{}^L\ov\al_j\}$ is a root basis of ${}^L\ov\D$, and we define ${}^\circ\ov\sigma_j \in {}^\circ\ov W = \ov W$ in the same way.
\end{defn}

Later we shall require the following lemma.
\begin{lemma}\label{very.technical}
Let $\ov\D$ be a finite type root system of rank $\ell$. And let $J$, $\ov\sigma_j$, ${}^LJ$, and ${}^L\ov\sigma_j$ be as in Definition \ref{sigma.remark}.
\begin{enumerate}
\item \label{VTpart1} The weights $\{\ov\La_j\}_{j \in J}$ represent $\ov Q^*$ modulo $\ov Q^\vee$. The weights $\{\ov\La_j\}_{j \in {}^LJ}$ represent $\ov P$ modulo $\ov Q$.

\item \label{VTpart2} For $j \in J$ it holds that $\ov\sigma_j\ov\La_j = -\ov\La_j$, and for $j \in {}^LJ$ it holds that ${}^L\ov\sigma_j\ov\La_j = -\ov\La_j$.
\end{enumerate}
\end{lemma}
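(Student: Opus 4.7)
The first part is essentially cosmetic. The isomorphisms $\widetilde W_+ \cong \widetilde W / W \cong \ov Q^*/\ov Q^\vee$ and ${}^\circ\widetilde W_+ \cong {}^\circ\widetilde W / {}^\circ W \cong \ov P / \ov Q$ have been recorded earlier in this section. Because the rotation factor $\ov\sigma_j$ of $\sigma_j = t_{\ov\La_j}\ov\sigma_j$ belongs to $\ov W \subset W$, the coset of $\sigma_j$ in $\widetilde W / W$ coincides with that of $t_{\ov\La_j}$, which under the identification corresponds to $\ov\La_j + \ov Q^\vee$. Enumerating $\widetilde W_+ = \{1\} \cup \{\sigma_j\}_{j \in J}$ therefore enumerates the cosets of $\ov Q^\vee$ in $\ov Q^*$, giving the first half of \ref{VTpart1}; the second half is identical with $J$ replaced by ${}^LJ$ and $\ov Q^*/\ov Q^\vee$ by $\ov P / \ov Q$.

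For \ref{VTpart2}, fix $j \in J$ and set $\ov W_{\hat j} = \langle s_i : i \neq j\rangle$. From $\langle \ov\La_j, \ov\al_i^\vee \rangle = \delta_{ij}$ we get that $\ov\La_j$ is fixed by $\ov W_{\hat j}$, and in fact $\C\ov\La_j$ is the entire $\ov W_{\hat j}$-fixed subspace of $\ov\h^*$. Meanwhile $\ov\sigma_j$ permutes $\{\ov\al_i : i \neq j\}$ by construction, so it normalizes $\ov W_{\hat j}$. It follows that $\ov\sigma_j \ov\La_j$ is again $\ov W_{\hat j}$-fixed, hence equal to $c\ov\La_j$ for some scalar $c$, and $c = \pm 1$ because $\ov\sigma_j$ is an isometry of $(\cdot,\cdot)$.

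To pin down the sign I would apply $\ov\sigma_j$ to both sides of
\[
(\ov\La_j, \ov\theta) = a_j (\ov\La_j, \ov\al_j) = \tfrac{1}{2}(\ov\al_j, \ov\al_j),
\]
which uses only $a_j = 1$. Since $\ov\sigma_j \ov\theta = -\ov\al_j$ by the definition of $\ov\sigma_j$, invariance of the form yields $-c \cdot (\ov\al_j, \ov\al_j)/2 = (\ov\al_j, \ov\al_j)/2$, and hence $c = -1$.

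The ${}^LJ$ case proceeds by an entirely parallel argument, with ${}^L\ov\sigma_j$ in place of $\ov\sigma_j$ and ${}^L\ov\theta_{\text{short}}$ in place of $\ov\theta$; the relevant defining property is ${}^L\ov\sigma_j(-{}^L\ov\theta_{\text{short}}) = {}^L\ov\al_j$. The only subtlety, and the point I regard as the main obstacle, is tracking the two scalings of the form between $\ov\D$ and ${}^L\ov\D$: one must verify that, when ${}^La_j = 1$, the analogue of the displayed identity above still reduces to $-c = 1$. This boils down to the relation $a_j = r^\vee a_j^\vee$ for short roots, which converts ${}^La_j = a_j^\vee = 1$ into exactly the scalar needed to make the calculation come out to $c = -1$ once more.
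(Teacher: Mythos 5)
Your proof is correct, and it takes a genuinely different route from the paper's in both halves. For part (\ref{VTpart1}) the paper cites \cite{FKW} for $j \in J$ and then, for $j \in {}^LJ$, works inside the Langlands dual lattices: it identifies $({}^L\ov Q)^*/{}^L\ov Q^\vee$ with $\ov P/\ov Q$ by rescaling and checks case by case that ${}^La_j^\vee = 1$, so that $(1/\sqrt{r^\vee}){}^L\ov\La_j = \ov\La_j$. You instead read the statement off the recorded isomorphisms $\widetilde W_+ \cong \widetilde W/W \cong \ov Q^*/\ov Q^\vee$ and ${}^\circ\widetilde W_+ \cong {}^\circ\widetilde W/{}^\circ W \cong \ov P/\ov Q$ together with the explicit descriptions $\sigma_j = t_{\ov\La_j}\ov\sigma_j$ and $\sigma_j = t_{\ov\La_j}{}^L\ov\sigma_j$. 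That is shorter, but be aware that the assertion that the translation part of the $j$-th diagram automorphism of ${}^\circ\D^\vee$ is exactly $\ov\La_j$ --- a fundamental weight of $\ov\D$, not a rescaled weight of ${}^L\ov\D$ --- is essentially the content being proved; the paper's rescaling computation, including the case check ${}^La_j^\vee=1$, is in effect the verification of that ``Explicitly\ldots'' sentence from Section \ref{liealgebras}. So the ${}^LJ$ half of your part (\ref{VTpart1}) is only as self-contained as that sentence.

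For part (\ref{VTpart2}) your argument (one-dimensionality of the $\langle s_i : i\neq j\rangle$-fixed subspace, isometry forcing $c=\pm1$, and a single inner product against the highest root to pin the sign) is cleaner than the paper's, which pairs $\ov\sigma_j^{-1}\ov\La_j^\vee$ against every simple root. One correction: the ``main obstacle'' you flag in the ${}^LJ$ case is not actually there. The sign computation $({}^L\ov\sigma_j\ov\La_j,\, -{}^L\ov\al_j) = (\ov\La_j,\, {}^L\ov\theta_{\text{short}}) = {}^La_j\,(\ov\La_j,\, {}^L\ov\al_j)$ takes place entirely inside ${}^L\ov\D$ and uses only ${}^La_j=1$, which is the definition of ${}^LJ$; since the conclusion $c=-1$ is scale-invariant and $\ov\La_j$ spans the same line as ${}^L\ov\La_j$, no comparison of normalisations between $\ov\D$ and ${}^L\ov\D$, and no relation among marks and comarks, is needed. (The identity ${}^La_j = a_j^\vee$ you invoke is in any case false for general $j$ --- e.g.\ $j=1$ in type $B_\ell$, where ${}^La_1 = 2$ but $a_1^\vee = 1$ --- although both sides do equal $1$ for $j \in {}^LJ$.)
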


\begin{proof}
(1) For $j \in J$ the claim is well known and appears in {\cite[Section 1.1]{FKW}} for example. Now let $j \in {}^LJ$. The set $\{{}^L\ov\La_j\}_{j \in {}^LJ}$ represents
\[
({}^L\ov Q)^* / {}^L\ov Q^\vee
= ({(1/\sqrt{r^\vee})}\ov Q^\vee)^* / {}^L\ov Q^\vee
= (\sqrt{r^\vee}\ov P)/(\sqrt{r^\vee}\ov Q).
\]
Hence $\{(1/\sqrt{r^\vee}){}^L\ov\La_j\}_{j \in {}^LJ}$ represents $\ov P/\ov Q$. Since
\[
{}^L\al_i^\vee
= \frac{2}{({}^L\al_i, {}^L\al_i)}{}^L\al_i
= \frac{{}^La_i}{{}^La_i^\vee}{}^L\al_i
= \frac{{}^La_i}{\sqrt{r^\vee} {}^La_i^\vee} \al_i^\vee,
\]
our representatives are
\[
(1/\sqrt{r^\vee}){}^L\ov\La_j = ({{}^La_j^\vee}/{{}^La_j}) \ov\La_j = {}^La_j^\vee \ov\La_j.
\]
If $j \in {}^LJ$ then ${}^La_j^\vee = 1$. This is obviously true for simply laced cases, vacuously true for types $F_4$ and $G_2$, and is directly confirmed in the remaining cases $B_\ell$ and $C_\ell$.

(2) We observe
\[
(\ov\sigma_j^{-1}\ov\La_j^\vee, \al_i) = (\ov\La_j^\vee, \ov\sigma_j\al_i) = \left\{\begin{array}{ll} -1 & \text{if $i=j$} \\ 0 & \text{if $i \neq 0, j$} \end{array}\right..
\]
Thus $\ov\sigma_j^{-1}\ov\La_j^\vee = -\ov\La_j^\vee$, and the result follows since $\ov\La_i$ is proportional to $\ov\La_i^\vee$. For $j \in {}^LJ$, we note that the Weyl groups of $\D$ and ${}^L\D$ are canonically identified. The claim now follows immediately from (1).
\end{proof}

Finally we recall definitions relating to affine vertex algebras. Let $\ov\g$, $\g$ be as above, and fix $k \in \C$ (which we assume different than the critical level $-h^\vee$). The \emph{universal affine vertex algebra} is the `vacuum' $\g$-module
\[
V^k(\ov{\g}) = U(\g) \otimes_{U(\ov{\g}[t]+\C K+\C d)} \C v_{k}
\]
where $K$ acts on $v_k$ by the scalar $k$, and $\ov{\g}[t]+\C d$ acts trivially (so $V^k(\ov{\g})$ is naturally a quotient of the Verma module $M(k\La_0)$). The vertex algebra structure on $V^k(\ov{\g})$ is uniquely defined \cite{KacVA} by $Y(a, z) = \sum_{m \in \Z} at^m z^{-m-1}$ for $a \in \ov{\g}$, the vacuum vector $\vac = v_k$, and translation operator $T = L_{-1}$ where $L(z)$ is the Virasoro field associated with the Sugawara vector
\[
\om = \om^\text{Sug} = \frac{1}{2(k+h^\vee)} \sum_{i} a^i_{(-1)} b^i_{(-1)} \vac
\]
(here $\{a^i\}$, $\{b^i\}$ are bases of $\ov{\g}$ dual with respect to $(\cdot, \cdot)$). The central charge of $V^k(\ov\g)$ is
\[
c_k = \frac{k \dim{\ov{\g}}}{k+h^\vee}.
\]
We denote by $V_k(\ov{\g})$ the simple quotient of $V^k(\ov{\g})$.

Let $\la \in \h^*$. The Verma ${\g}$-module is
\[
M(\la) = U(\tilde{\g}) \otimes_{U(\ov\h + \ov\n_+ + t\ov{\g}[t] + \C K + \C d)} \C v_{\la},
\]
where $\ov\h$ acts on $v_\la$ via $\la$, $K$ by $k$ and $\ov\n_+ + t\ov\g[t] + \C d$ acts by $0$. We denote by $L(\la)$ the irreducible quotient of $M(\la)$. Both of these naturally acquire the structure of positive energy $V^k(\ov\g)$-modules, in which $L_0 = -d + h_\la$, where
\[
h_\la = \frac{(\ov\la, \ov\la + 2\ov\rho)}{2(k + h^\vee)}
\]
is the vacuum anomaly.

We denote by $\OO_k$ the category of $\g$-modules $M$ of level $k$ possessing a generalised weight decomposition $M = \bigoplus_{\mu \in \h^*} M_\mu$ such that each $\dim M_\mu < \infty$, and such that the set of weights be contained in a finite union of sets of the form $\mu_i - \Z \D_+$.

\subsection{Characters and Trace Functions}

Let $Y = \HH \times \ov{\h}$ and
\begin{align}\label{Yplus.def}
Y^+ = \left\{ (\tau, h) \in Y \Big| \text{$\Imm{\al(h)} < 0$ for all $\al \in \ov{\D}_+^\vee$ and $\Imm{\ov\theta^\vee(h)} > -\Imm{\tau}$} \right\}.
\end{align}
For any highest weight $\g$-module $M$, the sum
\begin{align}\label{KWchar.gen.def}
\chi_M(\tau, x) = \tr_{M} e^{2\pi i x_0} q^{L_0 - c_k/24}
\end{align}
converges absolutely to a holomorphic function on $Y^+$, and extends to a meromorphic function on $Y$ with possible poles on the hyperplanes
\begin{align}
H_{\al, \om} = \left\{ (\tau, h) \in Y | \al(h) = \om \right\},
\end{align}
for $\al \in \ov{\D}_+^\vee$ and $\om \in \Z + \Z \tau$. Indeed for Verma modules
\[
\chi_{M(\la)}(\tau, x) = q^{h_\la-c_k/24} e^{2\pi i \ov\la(x)} \prod_{n \in \Z_{>0}} \frac{1}{(1-q^n)^\ell} \cdot \prod_{\al \in \ov\D_+^\vee} \prod_{n \in \Z_{+}} \frac{1}{(1 - q^n e^{-2\pi i \al(x)})(1 - q^{n+1} e^{2\pi i \al(x)})}.
\]
On the other hand if $\la$ is dominant integral then $\chi_{L(\la)}$ is holomorphic on $Y$ \cite[Section 12.7]{KacIDLA}.

\subsection{Admissible Weights}

Kac and Wakimoto introduced the notion of admissible weight in \cite{KWPNAS}. The irreducible modules $L(\la)$ for $\la$ admissible of level $k$ are relevant to the representation theory of $V_k(\ov\g)$ and its Hamiltonian reductions (see Section \ref{section.principal.affine} below), and their characters have interesting modular invariance properties.
\begin{defn}
Let $\la \in \h^*$. The associated \emph{integral coroot system} is
\[
R(\la) = \{\al^\vee \in \D^{\vee \, \text{re}} | \left< \la + \rho, \al^\vee \right> \in \Z\}.
\]
The weight $\la$ is called \emph{admissible} if
\[
\text{$\left< \la + \rho, \al^\vee \right> \notin \Z_{\leq 0}$ for all $\al^\vee \in \D_+^{\vee \, \text{re}}$}.
\]
The admissible weight $\la$ is called \emph{$G$-integrable} if
\[
\text{$\left<\la, \al^\vee\right> \in \Z_{\geq 0}$ for all simple roots $\al \in \ov{\Pi}$ of the finite Lie algebra $\ov\g$}.
\]
The admissible weight $\la$ is called \emph{principal admissible} if
\[
\text{$R(\la)$ is isometric to $\D^{\vee \, \text{re}}$}.
\]
 The sets of admissible, $G$-integrable admissible, and principal admissible weights of level $k$ are denoted respectively $\adm^k$, $\adm^k_+$, and $\prin^k$.
\end{defn}
\begin{exmp}
Let $\al_1$, $\al_2$ denote the long and short simple roots of $G_2$, respectively. Then the weight $\la = \frac{1}{3}\La_0 + \frac{1}{2}\La_1 + \La_2$ of $G_2^{(1)}$ (of level $k = 7/3$) is admissible. Since
\[
R(\la) = \left\{ 3nK \pm \al_1^\vee \pm \al_2^\vee | n \in \Z \right\}
\]
is isometric to the affinisation of $A_1 \oplus A_1$, the weight $\la$ is not principal admissible.
\end{exmp}
The classification of all admissible weights was carried out in \cite{KW89}. The classification of $G$-integrable admissible weights is much simpler, and consists of just two cases. Indeed let $k \in \Q$, put $k+h^\vee = p/q$ where $(p, q)=1$, and define $S_{(q)} \subset \D^\vee_+$ by
\begin{align}\label{coroot.system.definition}
\begin{split}
S_{(q)} = \{\ga_0^\vee, \ldots, \ga_\ell^\vee\}, \quad \text{where} \quad \ga_i^\vee &= \text{$\al_i^\vee$ for $i = 1, \ldots, \ell$, and} \\
\ga_0^\vee &= \left\{\begin{array}{ll}
qK - \ov\theta^\vee_{\text{short}} & \text{if $(q, r^\vee)=1$,} \\
qK - \ov\theta^\vee_{\text{long}} & \text{if $(q, r^\vee) \neq 1$.} \\
\end{array}
\right.
\end{split}
\end{align}
If $\la \in \adm^k_+$ then $R(\la)$ is the coroot system with base $S_{(q)}$. If $(q, r^\vee)=1$ then $R(\la)$ is isometric to $\D^\vee$, and if $(q, r^\vee)\neq 1$ then $R(\la)$ is isometric to ${}^\circ\D^\vee$.

Let $\la \in \prin^k$, then $(q, r^\vee)=1$ and {\cite[Lemma 2.1]{KW89}} implies that $R(\la) = y(S_{(q)})$ for some $y \in \widetilde{W}$.

\begin{defn}\label{coprin.definition}
The admissible weight $\la$ is called \emph{coprincipal admissible} if $R(\la) = y(S_{(q)})$ for some $y \in \widetilde{W}$ and some $q$ such that $(q, r^\vee)\neq 1$. The set of coprincipal weights of level $k$ is denoted $\coprin^k$.
\end{defn}

\begin{rem}
If $\la$ is coprincipal admissible then $R(\la)$ is isometric to ${}^\circ\D^\vee$. In {\cite[Table 1]{KW89}} coroot systems $R(\la)$ for admissible $\la$ are classified up to the action of $\widetilde{W}$. There is a unique $\widetilde{W}$-orbit of such coroot bases equivalent to ${}^\circ\D^\vee$ in types $B_\ell$, $F_4$ and $G_2$. In type $C_\ell$ there are two such $\widetilde{W}$-orbits. In other words, $\la \in \coprin^k$ if and only if $R(\la)$ is isometric to ${}^\circ\D^\vee$ in types $B_\ell$, $F_4$ and $G_2$, but not in type $C_\ell$.
\end{rem}

The number $k \in \Q$ is called an \emph{admissible}, \emph{principal admissible}, or \emph{coprincipal admissible number} if $k\La_0$ lies in $\adm^k$, $\prin^k$, $\coprin^k$ respectively.

We see the importance of admissible weights from the perspective of vertex algebras in the following result.
\begin{thm}[{\cite[Main Theorem]{A12catO}}]\label{rational.category.O.theorem}
Let $k \in \Q$ be a principal \textup{(}resp.\ coprincipal\textup{)} number for $\ov\g$. The $V^k(\ov\g)$-module $L(\la)$ descends to a module over the simple quotient $V_k(\ov\g)$ if and only if $\la \in \prin^k$ \textup{(}resp.\ $\la\in \coprin^k$\textup{)}. Furthermore any $V_k(\ov\g)$-module from category $\OO_k$ is completely reducible.
\end{thm}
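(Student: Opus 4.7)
The plan is to reduce the classification problem to the Zhu algebra and then establish complete reducibility via an $\mathrm{Ext}^1$-vanishing argument. Since $\zhu(V^k(\ov\g)) \cong U(\ov\g)$, the Zhu algebra of the simple quotient has the form $\zhu(V_k(\ov\g)) = U(\ov\g)/I_k$, where $I_k$ is the image in $U(\ov\g)$ of the maximal proper graded submodule $N_k \subset V^k(\ov\g)$. A highest weight $\g$-module $L(\la)$ of level $k$ descends to a $V_k(\ov\g)$-module precisely when $N_k$ annihilates $L(\la)$, equivalently when the finite part $\ov\la$ lies in the zero locus of $I_k$.

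The first substantive step is to identify $I_k$ explicitly for $k$ a (co)principal admissible number. The submodule $N_k$ is generated by singular vectors in $V^k(\ov\g)$ whose existence at admissible levels follows from the Kac-Kazhdan determinant formula, and whose zero modes in the Zhu algebra yield explicit polynomials in $U(\ov\h)$. Using the Kac-Wakimoto classification of admissible weights together with linkage under the integral Weyl group $\widetilde W(\la)$, one verifies that the intersection of the zero locus of these polynomials with the set of dominant integral finite highest weights is exactly $\{\ov\la \,|\, \la \in \prin^k\}$ in the principal case, and $\{\ov\la \,|\, \la \in \coprin^k\}$ in the coprincipal case. The (co)principal hypothesis on $k$ is what pins down the correct root datum here: in the principal case $\widetilde W(\la)$ is isomorphic to the affine Weyl group of $\g$, and in the coprincipal case it matches that of the Langlands dual twisted root system ${}^\circ\D^\vee$ introduced in Section \ref{liealgebras}.

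For complete reducibility it suffices to prove $\mathrm{Ext}^1_{\OO_k}(L(\la), L(\mu)) = 0$ for all admissible $\la, \mu$ of the given type, since the local finiteness of $\OO_k$ then propagates semisimplicity from pairs of simples to all objects. A nonsplit extension would correspond to a nontrivial map from a Verma subquotient of $M(\la)$ onto $L(\mu)$. The strategy is to combine the BGG-type resolution of $L(\la)$ (governed by $\widetilde W(\la)$) with the Kac-Wakimoto character formulas for admissible modules: the composition factors of any such Verma subquotient either have non-admissible highest weight, and are therefore killed by the singular vectors of the previous step when we pass to $V_k(\ov\g)$, or they are linked to $\la$ in a pattern which, combined with admissibility of both endpoints, forces the extension to split. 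The main obstacle is matching the linkage combinatorics of $\widetilde W(\la)$ with the singular vector data explicitly; this requires a careful analysis of Kazhdan-Lusztig polynomials specialised to admissible level, and is the technical heart of the argument in \cite{A12catO}.
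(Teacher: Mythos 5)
This statement is quoted in the paper as the Main Theorem of \cite{A12catO} and is given no proof here, so there is no internal argument to compare against; what you have written is an outline of a possible proof of Arakawa's theorem itself. As an outline it has a reasonable architecture (Zhu algebra for the classification of simple objects, $\mathrm{Ext}^1$-vanishing for semisimplicity), but it contains genuine gaps. The most concrete one is in the classification step: you propose to intersect the zero locus of $I_k$ with ``the set of dominant integral finite highest weights,'' but the weights in $\prin^k$ and $\coprin^k$ are in general \emph{not} $G$-integrable --- the paper carefully distinguishes $\adm^k_+$ from $\prin^k$, and a (co)principal admissible weight has the form $y(\phi(\nu))-\rho$ for an arbitrary $y\in\widetilde W$ with $y(S_{(q)})$ contained in the positive coroots, so $\ov\la$ is typically far from dominant integral. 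Restricting to dominant $\ov\la$ would miss most of the modules the theorem classifies. Relatedly, pinning down $\coprin^k$ by the isometry type of $R(\la)$ alone fails in type $C_\ell$: the paper notes there are two distinct $\widetilde W$-orbits of coroot bases isometric to ${}^\circ\D^\vee$ there, and the defining condition is that $R(\la)$ lie in the orbit of $S_{(q)}$, not that it have the right abstract type.

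Second, both substantive steps rest on inputs you do not supply. Explicit formulas for the singular vectors generating the maximal submodule of $V^k(\ov\g)$, and hence for their images in $\zhu(V^k(\ov\g))\cong U(\ov\g)$, are not available in general (closed expressions of Malikov--Feigin--Fuchs type exist only in limited cases), and even the generation of the maximal submodule by singular vectors at admissible level is itself a nontrivial theorem; the argument of \cite{A12catO} circumvents the explicit computation by controlling the ideal $I_k$ through characters and associated varieties rather than closed formulas. For complete reducibility, your paragraph ends by asserting that linkage ``forces the extension to split'' and then defers the mechanism to \cite{A12catO}; that is a citation, not a proof. Since the theorem enters this paper precisely as an external input, citing it is the right thing to do --- but the preceding heuristics should then be labelled as motivation rather than as an argument.
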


Having put $k+h^\vee=p/q$ as above, let $\phi : \h^* \rightarrow \h^*$ be the isometry
\begin{align}\label{isometry.phi}
\phi(\La_0) = (1/q)\La_0, \quad \phi(\delta) = q\delta, \quad \phi|_{\ov\h^*} = 1.
\end{align}
For the rest of this section we assume $k \in \Q$ to be a principal admissible number. The weights $\la \in \adm^k$ satisfying $R(\la) = y(S_{(q)})$ for some fixed $y \in \widetilde{W}$ are precisely
\[
\la = y(\phi(\nu)) - \rho,
\]
where $\nu$ ranges over the set of regular elements of $P_+^p$. Let $y = t_\beta \ov y$ where $\ov y \in \ov W$ and $\beta \in \ov Q^*$. In this way a triple $(\nu, \ov y, \beta)$ is associated with $\la \in \adm^k$.

In {\cite{KWPNAS}} and {\cite{KW89}} Kac and Wakimoto established modular properties for the characters $\chi_\la = \chi_{L(\la)}$ of irreducible $\g$-modules $L(\la)$ of principal admissible highest weight. In the next section we derive a similar result in the coprincipal case.
\begin{thm}[\cite{KW89}, Theorem 3.6]\label{KWSmatrix}
Let $k \in \Q$ be a principal admissible number, and put $k+h^\vee = p/q$ where $(p, q) = 1$. Then the $\C$-linear span of the set $\{\chi_\la | \la \in \prin^k\}$ is invariant under the action
\[
\left[ f \cdot \twobytwo abcd \right](\tau, x) = \exp{\left[ \frac{\pi i k c (x, x)}{c\tau+d} \right]} f\left( \frac{a\tau+b}{c\tau+d}, \frac{x}{c\tau+d} \right)
\]
of $SL_2(\Z)$ on functions of $(\tau, x) \in \HH \times \ov{\h}$. Furthermore the $S$-matrix $\{a({\la, \la'})\}$, defined by
\[
\chi_\la \cdot S = \sum_{\la' \in \prin^k} a({\la, \la'}) \chi_{\la'},
\]
is given explicitly by
\begin{align*}
a({\la, \la'})
= {} & \frac{i^{|\ov{\D}_+|}}{|\ov P/pq \ov Q^\vee|^{1/2}} e^{-2\pi i \left[ (\nu | \beta') + (\nu' | \beta) + \frac{p}{q} (\beta | \beta') \right]} \eps(\ov{yy}')
\sum_{w \in \ov{W}} \varepsilon(w) e^{-\frac{2\pi i q}{p} (w(\nu) | \nu')}.
\end{align*}
where $(\nu, \ov y, \beta)$ is a triple associated with $\la$ as above, and $(\nu', \ov y', \beta')$ is similarly associated with $\la'$.
\end{thm}

\section{Coprincipal $S$-matrix}\label{section.coprin.Smatrix}

In Theorem \ref{theorem1.affine} below we establish $SL_2(\Z)$-invariance for the affine vertex algebra {$V_k(\ov\g)$} at (principal or coprincipal) admissible level $k$. In this section we compute the associated $S$-matrix in the coprincipal case. The proof follows the pattern of \cite{KW89} (see also {\cite[Chapter 3]{WakimotoBook}}), but several adaptations to the coprincipal case must be made.

We assume then that $k$ is a coprincipal admissible number, and we write $k+h^\vee = p/q$ so that $(p, q) = 1$ and $r^\vee$ divides $q$. Defining $S_{(q)}$ and $\phi$ as in (\ref{coroot.system.definition}) and (\ref{isometry.phi}), we have $\phi^*(S_{(q)}) = {}^\circ\D^\vee$. The admissible weights with coroot basis $S_{(q)}$ are exactly those $\la \in \h^*$ such that
\[
\left< \la + \rho, \ga_i^\vee \right> = n_i+1 \in \Z_{\geq 1} \quad i = 0, \ldots, \ell.
\]
Equivalently $\phi^{-1}(\la+\rho)$ is strictly dominant integral relative to ${}^\circ\D^\vee$. The level of this weight is
\[
\left< \phi^{-1}(\la+\rho), K \right> = \left< \la+\rho, qK \right> = q(k+h^\vee) = p.
\]
Hence the coprincipal admissible weights with coroot system $y(S_{(q)})$ are exactly the weights
\begin{align*}%\label{la.to.nu}
\la = y(\phi(\nu)) - \rho \quad \text{for regular $\nu \in {}^\circ P_{+}^p$}.
\end{align*}
We observe that $p\La_0 + \ov\la \in {}^\circ P_{+}^p$ if and only if $\la \in \ov P$ satisfies $\left<\ov\la, \ov\al_i^\vee\right> \in \Z_+$ for $i = 1,\ldots, \ell$ and $\left<\ov\la, \ov\theta^\vee\right> \leq p$. Therefore it is equivalent to write
\begin{align*}%\label{la.to.nu}
\la = y(\phi(\nu+{}^\circ\rho)) - \rho \quad \text{for $\nu \in {}^\circ P_{+}^{p - h}$}.
\end{align*}
Let $y = t_\beta \ov y$ where $\beta \in \ov Q^*$ and $\ov y \in \ov W$. In this way a triple $(\nu, \ov y, \beta)$ is associated with $\la \in \coprin^k$.

The normalised character $\chi_\la$ of a highest weight $\g$-module of arbitrary admissible highest weight $\lambda$ is given by {\cite[Theorem 1]{KWPNAS}}
\[
{\chi_\la(h)} = \frac{A_{\la+\rho}(h)}{A_\rho(h)},
\]
where by definition
\[
A_{\la}(h) = e^{-\frac{(\la, \la)}{2\left<\la, K\right>} (\delta, h)} \sum_{w \in W(\la)} \eps(w) e^{(w(\la), h)}.
\]
Here $W(\la)$ is the 
integral Weyl group of $\la$,
that is the
subgroup of $W$ generated by reflections in roots of $R(\la)$.

Now, $(\la+\rho, h) = (y(\phi(\nu)), h) = (\nu, \phi^{-1} y^{-1} h)$, and in the coordinates
\[
(\tau, z, t) \equiv 2\pi i \left( -\tau \La_0 + \sum z_i \al_i + t\delta \right)
\]
one has
\[
\phi^{-1}y^{-1}(\tau, z, t) = q \ov y^{-1} t_{-\beta/q}(\tau, z/q, t/q^2).
\]
This change of coordinates intertwines the summation on $W(\la) = y W(S_{(q)})y^{-1}$ with a summation on $W({}^\circ\D^\vee) = \ov W \ltimes \ov Q$. Thus we have:
\begin{lemma}\label{coprin.coord.change}
Let $\la$ be a coprincipal admissible weight, and $\nu$, $\ov y$, $\beta$ as above.
\[
\chi_\la(\tau, z, t) = \frac{A_{\nu}(q \ov y^{-1} t_{-\beta/q}(\tau, z/q, t/q^2))}{A_\rho(\tau, z, t)}.
\]
% Maybe notation $A_\nu$ is suboptimal here. But the sum is definitely over $\ov W \ltimes Q$.
\end{lemma}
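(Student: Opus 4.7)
The plan is to apply the Weyl--Kac character formula $\chi_\la(h) = A_{\la+\rho}(h)/A_\rho(h)$ for admissible $\la$ (so that the common denominator $A_\rho$ cancels between the two sides of the lemma), reducing the claim to the identity $A_{\la+\rho}(h) = A_\nu(\phi^{-1}y^{-1}h)$ on the numerator, together with the coordinate change $\phi^{-1}y^{-1}(\tau,z,t) = q\,\ov y^{-1} t_{-\beta/q}(\tau, z/q, t/q^2)$.

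For the first identity I would exploit the structural description $\la+\rho = y\phi(\nu)$ and $R(\la) = y(S_{(q)})$. Since $\phi$ is an isometry that intertwines $W({}^\circ\D^\vee) = \ov W \ltimes \ov Q$ with $W(S_{(q)})$, one has $W(\la) = (y\phi)\,W({}^\circ\D^\vee)\,(y\phi)^{-1}$. Changing summation variable $w = y\phi\,\tilde w\,(y\phi)^{-1}$ in the sum defining $A_{\la+\rho}$ collapses the exponent to $w(\la+\rho) = y\phi\tilde w(\nu)$; using that both $y \in \widetilde W$ and $\phi$ are isometries, this rearranges as $(w(\la+\rho), h) = (\tilde w\nu,\, \phi^{-1}y^{-1}h)$. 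The sign satisfies $\eps(w) = \eps(\tilde w)$, because $y$ has length zero in the extended affine Weyl group and conjugation by an isometry of root data preserves Coxeter length. The scalar prefactor of $A$ also matches, via the computations $(\phi\nu,\phi\nu) = (\nu,\nu)$, $\langle\phi\nu,K\rangle = (\nu,\phi^{-1}\delta) = (1/q)\langle\nu,K\rangle$, and $(\delta,\phi^{-1}y^{-1}h) = q(\delta,h)$ (using $\phi\delta = q\delta$ and $\widetilde W$-invariance of $\delta$); the factors of $q$ combine correctly.

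The remaining coordinate-change identity is verified by direct computation: writing $y^{-1} = \ov y^{-1} t_{-\beta}$ and expanding $t_{-\beta}$ on the basis $\La_0$, $\al_i$, $\delta$ using the explicit formula for affine translations, one then applies $\phi^{-1}$, which scales the $\La_0$ and $\delta$ coefficients by $q$ and $1/q$ respectively while fixing the $\ov\h^*$-part. Comparing with the analogous expansion of $t_{-\beta/q}$ applied to $(\tau, z/q, t/q^2)$ and then rescaled by $q$ yields the same vector. I expect this last bookkeeping step to be the principal technical hurdle, since the $\delta$-coefficient receives contributions from the quadratic term $\tfrac{1}{2}(\beta/q,\beta/q)$ in $t_{-\beta/q}$ on the rescaled argument, from $\sum z_i(\al_i,\beta)$-type cross terms, and from the rescaling by $\phi^{-1}$; these must line up in just the right way, but once they do the lemma follows immediately.
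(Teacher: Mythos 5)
Your proposal follows essentially the same route as the paper, which simply records the character formula $\chi_\la = A_{\la+\rho}/A_\rho$, the identity $(\la+\rho,h)=(y\phi(\nu),h)=(\nu,\phi^{-1}y^{-1}h)$, the coordinate change $\phi^{-1}y^{-1}(\tau,z,t)=q\,\ov y^{-1}t_{-\beta/q}(\tau,z/q,t/q^2)$, and the intertwining of the sum over $W(\la)=yW(S_{(q)})y^{-1}$ with the sum over $W({}^\circ\D^\vee)=\ov W\ltimes \ov Q$; your prefactor and coordinate computations are correct and merely make explicit what the paper leaves implicit. The only blemish is your justification of $\eps(w)=\eps(\tilde w)$ via ``$y$ has length zero'' (false for general $y=t_\beta\ov y\in\widetilde W$): the correct and immediate reason is that $\eps$ is the determinant, hence invariant under conjugation.
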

To determine modular properties of the $\chi_\la$, we wish to express the numerator of the right hand side in Lemma \ref{coprin.coord.change} in terms of the theta functions
\begin{align}\label{coprin.theta.def}
\Theta_\mu(h) = e^{-\frac{(\mu, \mu)}{2\left<\mu, K\right>}(\delta, h)} \sum_{t \in \ov Q} e^{(t(\mu), h)}.
\end{align}
Since
\[
\Theta_\mu(qh) = \Theta_{q\mu}(h) \quad \text{and} \quad \Theta_{t_\beta \mu}(h) = \Theta_\mu(t_{-\beta} h),
\]
it follows that
\begin{align}\label{A.to.B}
\begin{split}
A_{\nu}(q \ov y^{-1} t_{-\beta/q}(\tau, z/q, t/q^2)
%
% &= \sum_{w \in W({}^\circ\D^\vee)} \eps(w) e^{<w(\nu+{}^\circ\rho), \widehat{h}>} \\
% %
% &= \sum_{w \in W({}^\circ\D^\vee)} \eps(w) e^{w(\nu+{}^\circ\rho)}(q \ov y^{-1} t_{-\beta/q}(\tau, z/q, t/q^2)) \\
% %
% &= \eps(\ov y) \sum_{w \in W({}^\circ\D^\vee)} \eps(w) e^{w(\nu+{}^\circ\rho)}(q t_{-\beta/q}(\tau, z/q, t/q^2)) \\
% %
% &= \eps(\ov y) \sum_{\ov w \in \ov W} \sum_{\ga \in M'} \eps(\ov w) e^{t_\ga \ov w(\nu+{}^\circ\rho)}(q t_{-\beta/q}(\tau, z/q, t/q^2)) \\
% %
&= \eps(\ov y) \sum_{\ov w \in \ov W} \eps(\ov w) \Theta_{\ov w(\nu)}\left(q t_{-\beta/q}(\tau, z/q, t/q^2)\right) \\
&= \eps(\ov y) \sum_{\ov w \in \ov W} \eps(\ov w) \Theta_{q t_{\beta/q} \ov w(\nu)}(\tau, z/q, t/q^2).
\end{split}
\end{align}
We observe that
\[
q t_{\beta/q} \ov w(\nu) = q \ov w(\nu) + p \beta \in q P + Q^* \subset Q^*,
\]
and is a weight of level $pq$.

The following proposition is {\cite[Theorem 13.5]{KacIDLA}} (where the stronger hypothesis that $L$ be integral is implicit but not used).
\begin{prop}\label{General.Theta.Trans}
Let $L$ be a positive definite lattice of rank $\ell$, and let $m \in \Z_+$ be such that the rescaled lattice $\sqrt{m}L$ is integral. Let $\Theta_\mu$ be defined as in \textup{(}\ref{coprin.theta.def}\textup{)}. For any $\mu \in L^*$ one has
\[
\Theta_\mu\left( -\frac{1}{\tau}, \frac{z}{\tau}, t - \frac{(z, z)}{2\tau} \right)
= \frac{(-i\tau)^{\ell/2}}{|L^* / mL|^{1/2}} \sum_{\mu' \in L^* \bmod{m L}} e^{-2\pi i (\mu, \mu') / m} \Theta_{\mu'}.
\]
\end{prop}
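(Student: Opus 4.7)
The plan is to prove this standard transformation formula by Poisson summation, in the classical manner. First I would write $\mu = \ov\mu + m\La_0$ with $\ov\mu \in \ov\h^*$ and expand the theta series in the coordinates $(\tau, z, t)$ as $\Theta_\mu(\tau, z, t) = e^{2\pi i m t}\sum_{\gamma \in L + \ov\mu/m}e^{\pi i m \tau(\gamma,\gamma) + 2\pi i m(\gamma, z)}$. The shift $t \mapsto t - (z,z)/(2\tau)$ appearing in the statement is precisely the amount needed so that, after substituting $\tau \mapsto -1/\tau$ and $z \mapsto z/\tau$, the exponent completes the square to $-(\pi i m/\tau)(\gamma - z, \gamma - z)$, i.e.\ a pure Gaussian centred at $z$. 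Positive definiteness of $L$ ensures absolute convergence throughout and fixes the branch of $(-i\tau)^{\ell/2}$.

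Next I would translate the summation index onto $L$ by writing $\gamma = \lambda + \ov\mu/m$ with $\lambda \in L$, reducing the problem to evaluating $\sum_{\lambda \in L} f(\lambda)$ where $f$ is a translate of a Gaussian. Poisson summation gives $\sum_{\lambda \in L} f(\lambda) = \mathrm{vol}(L)^{-1}\sum_{\lambda' \in L^*}\widehat{f}(\lambda')$, and the Fourier transform of a Gaussian is again a Gaussian, producing an overall prefactor of $(\tau/im)^{\ell/2}/\mathrm{vol}(L)$ together with a new Gaussian exponent of the form $e^{(\pi i \tau/m)(\lambda', \lambda') + 2\pi i (\lambda', z) - 2\pi i (\lambda', \ov\mu)/m}$.

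Finally I would split $L^* = \bigsqcup_{\mu' \in L^*/mL} (\mu' + mL)$, writing $\lambda' = \mu' + m\alpha$ with $\alpha \in L$. The inner sum over $\alpha$ reassembles into $\Theta_{\mu'}(\tau, z, t)$, using that $mL$ is integral so the cross terms in the Gaussian are integer multiples of $2\pi i$ and the overall exponent reproduces the defining series of $\Theta_{\mu'}$. The phase produced by the shift of $\ov\mu/m$ is precisely $e^{-2\pi i(\mu, \mu')/m}$ (which depends only on the coset of $\mu'$ modulo $mL$ once $mL$ is integral, so the sum over $\mu' \in L^*/mL$ is well defined). The constant then simplifies using $\mathrm{vol}(L)\,\mathrm{vol}(L^*) = 1$ and $|L^*/mL| = m^\ell\,\mathrm{vol}(L)^2$ (valid since $mL \subset L^*$ under the integrality assumption), yielding the stated prefactor $(-i\tau)^{\ell/2}/|L^*/mL|^{1/2}$.

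The main obstacle is purely bookkeeping: choosing conventions for $\mathrm{vol}(L)$ consistently between the Poisson summation and the lattice index, correctly tracking the factor of $m$ in going from summation over $L$ to summation over $L^*$, and pinning down the branch of $(-i\tau)^{\ell/2}$ so that the identity restricts correctly at $\tau = i$. None of these is conceptually difficult, but all must be done with care lest extraneous factors of $m^{\ell/2}$ or $i^\ell$ appear.
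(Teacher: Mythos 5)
Your argument is correct and is the classical Poisson-summation proof of the theta transformation law; the paper itself does not prove Proposition \ref{General.Theta.Trans} but cites it as \cite[Theorem 13.5]{KacIDLA}, and the proof there is exactly this completion-of-the-square plus Poisson summation computation. Your bookkeeping checks out: with $mL \subseteq L^*$ one has $|L^*/mL| = m^\ell\,\mathrm{vol}(L)^2$, so the prefactor $(\tau/im)^{\ell/2}/\mathrm{vol}(L)$ from the Gaussian Fourier transform matches $(-i\tau)^{\ell/2}/|L^*/mL|^{1/2}$, and the well-definedness of the phase modulo $mL$ follows from $\mu \in L^*$ as you note.
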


%Let $Q^{*, pq} \subset {}^\circ P^{pq}$ denote the subset of weights of the form $pq\La_0 + \ov\mu$ with $\ov\mu \in Q^*$.

Let us write $B_\la(\tau, z, t)$ for (\ref{A.to.B}). By Proposition \ref{General.Theta.Trans} we have
\begin{align}\label{B.S.reduction}
[{B_\la} \cdot S](\tau, z, t)
&= \eps(\ov y) \sum_{\ov w \in \ov W} \eps(\ov w) {\Theta_{q\ov w(\nu)+p\beta}}(-1/\tau, z/\tau, 1/q^2(t - (z, z)/2\tau)) \nonumber \\
&= (-i)^{\ell/2} |\ov Q^* / pq\ov Q|^{-1/2} \eps(\ov y) \sum_{\ov w \in \ov W} \eps(\ov w) \sum_{\mu' \in \ov Q^* \bmod{pq \ov Q}} e^{-\frac{2\pi i}{pq} (q\ov w(\nu)+p\beta, \mu')} \Theta_{\mu'}(\tau, z/q, t/q^2).
\end{align}
For all $\nu \in \ov P$ and $\al \in \ov\D$ one has $r_\al(\nu) - \nu = \left<\nu, \al^\vee\right>\al \in \ov Q$, and so for all $\ov w \in \ov W$ one has $\ov w(\nu) - \nu \in Q$. It follows that for all $\beta \in \ov Q^*$ and $\nu \in \ov P$ one has $(\ov w(\nu), \beta) - (\nu, \beta) \in (\ov Q, \ov Q^*) = \Z$. Thus modulo $\Z$ it holds that
\begin{align}\label{inner.product}
(q\ov w(\nu)+p\beta, q\ov w'(\nu')+p\beta')
\equiv q^2 (\ov w \nu, \ov w' \nu') + pq\left( (\nu, \beta') + (\nu', \beta) \right) + p^2 (\beta, \beta').
\end{align}

By Lemma {\ref{big.old.technical}\,\,(\ref{BOTmu})} below we are free to substitute $\mu' = q \ov w'(\nu') + p\beta'$ in (\ref{B.S.reduction}). It follows from (\ref{inner.product}) and a standard symmetry argument that only regular $\nu'$ contribute nontrivially to the sum.

%By Lemma \ref{big.old.technical}(4)(5) sum over $(\nu', \ov w', \beta')$ involves a sum over all $\ov w'$.
What results is a sum over $\ov w \in \ov W$ and over equivalence classes of $(\nu', \ov w', \beta')$. More precisely $\nu'$ runs over a system of representatives of ${}^\circ P^p_+ / {}^\circ \widetilde{W}_+$, $\ov w'$ runs over $\ov W$, and $\beta$ is determined by $\mu'$, $\nu'$ and $\ov w'$. We set $\ov w = \ov w'\ov w''$, and rewrite (\ref{B.S.reduction}) as
\begin{align*}
(-i)^{\ell/2} |\ov Q^* / pq\ov Q|^{-1/2} & \eps(\ov y)
\sum_{\nu', \beta'} e^{-2\pi i[ (\nu, \beta') + (\nu', \beta) + \frac{p}{q}(\beta, \beta')} \\
& \times \sum_{\ov w'' \in \ov W} \eps(\ov w'') e^{-2\pi i\frac{q}{p}(\ov w'' \nu, \nu')}
\sum_{w' \in \ov W} \eps(w') \Theta_{q \ov w'(\nu') + p\beta'}(\tau, z/q, t/q^2).
\end{align*}
Now we use $\beta'$ to determine an element $y' \in \widetilde W$ as in Lemma \ref{big.old.technical}, and we put
\[
\la' = y'\phi(\nu')-\rho.
\]
We now have
\[
A_{\la'}(\tau, z, t) = \eps(\ov y') \sum_{w' \in \ov W} \eps(w') \Theta_{q \ov w'(\nu') + p\beta'}(\tau, z/q, t/q^2).
\]
By Lemma {\ref{big.old.technical}\,\,(\ref{BOTla.indep})} the weight $\la'$ depends on $\mu'$, and not on the choice of $\nu'$ in its $\widetilde{W}_+$-orbit.

Recall {\cite[Lemma 13.8 (b)]{KacIDLA}} that
\[
{A_\rho}(-1/\tau, z/\tau, t - (z, z)/2\tau) = (-i)^{\ell/2 + |\ov\D_+|} A_\rho(\tau, z, t).
\]
% Wakimoto (3.79) has a misprint.

We thus have the following theorem.
\begin{thm}\label{KWSmatrix.coprin}
Let $k \in \Q$ be a coprincipal admissible number, and put $k+h^\vee = p/q$ where $(p, q) = 1$ \textup{(}so $r^\vee$ divides $q$\textup{)}. Then the $\C$-linear span of the set $\{\chi_\la | \la \in \coprin^k\}$ is invariant under the action
\[
\left[f \cdot \twobytwo abcd \right](\tau, x) = \exp{\left[ \frac{\pi i k c (x, x)}{c\tau+d} \right]} f\left( \frac{a\tau+b}{c\tau+d}, \frac{x}{c\tau+d} \right)
\]
of $SL_2(\Z)$ on functions of $(\tau, x) \in \HH \times \ov{\h}$. Furthermore the $S$-matrix $\{a({\la, \la'})\}$, defined by
\[
\chi_\la \cdot S = \sum_{\la' \in \coprin^k} a({\la, \la'}) \chi_{\la'},
\]
is given explicitly by
\begin{align*}
a({\la, \la'})
= {} & \frac{i^{|\ov{\D}_+|}}{|\ov Q^*/pq \ov Q|^{1/2}} e^{-2\pi i \left[ (\nu | \beta') + (\nu' | \beta) + \frac{p}{q} (\beta | \beta') \right]} \eps(\ov{yy}')
\sum_{w \in \ov{W}} \varepsilon(w) e^{-\frac{2\pi i q}{p} (w(\nu) | \nu')}.
\end{align*}
where $(\nu, \ov y, \beta)$ is a triple associated with $\la$ as above, and $(\nu', \ov y', \beta')$ is similarly associated with $\la'$.
\end{thm}

\begin{rem}
We have related characters of coprincipal admissible $\g$-modules to characters of a certain subset of integrable highest weight ${}^\circ\g$-modules, where ${}^\circ\g$ is the twisted affine Lie algebra with root system ${}^\circ\D$. See also \cite[(3.4)]{KW89}. The characters of integrable highest weight ${}^\circ\g$-modules are detailed in {\cite[Proposition 4.5]{KP84}} and {\cite[Theorem 13.9]{KacIDLA}}. In particular their span is not $SL_2(\Z)$-invariant, and is instead invariant under a congruence subgroup. On the other hand the span of the characters of coprinciple admissible $\g$-modules is invariant under the whole of $SL_2(\Z)$. The difference comes about because (1) these admissible weights correspond to a proper subset of integral weights of ${}^\circ\D$, and (2) the Weyl denominator of $\D$ is $SL_2(\Z)$-invariant, while that of ${}^\circ\D$ is not.
% The notion of adjacent root system was introduced to express their transformation behaviour under the whole of $SL_2(\Z)$. 
\end{rem}

In the proof of Theorem \ref{KWSmatrix.coprin} we have used the following technical lemmas which are the coprincipal analogue of {\cite[Lemma 3.4]{KW89}}.

%%%%%%%%%%%%%%%%%%%%%%%%%%%%%%%%%%%%%%%%%%%%%%%%%%%
%%%%%%%%%%%%%%%%%%%%%%%%%%%%%%%%%%%%%%%%%%%%%%%%%%%
%%%%%%%%%%%%%%%%% THE LEMMATA %%%%%%%%%%%%%%%%%%%%%
%%%%%%%%%%%%%%%%%%%%%%%%%%%%%%%%%%%%%%%%%%%%%%%%%%%
%%%%%%%%%%%%%%%%%%%%%%%%%%%%%%%%%%%%%%%%%%%%%%%%%%%

\begin{lemma}\label{ga.from.beta}
{Let $\D$ be a root system of type $X_N^{(1)}$ where $X_N$ has lacing number $r^\vee$}, and let $q \in \Z_+$ be a multiple of $r^\vee$. Let $S_{(q)}$ be as above. For any $\beta \in \ov Q^*$, there exists a unique $\ov{y} \in \ov{W}$ and a unique $\ga \in \ov Q$ such that $t_{\beta+q\ga}\ov{y}(S_{(q)}) \subset \D_+^\vee$.
\end{lemma}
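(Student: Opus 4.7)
The plan is to adapt the proof of the principal analogue \cite[Lemma 3.4]{KW89} to the coprincipal setting. The key structural observation is that, via the isometry $\phi$ of \textup{(}\ref{isometry.phi}\textup{)}, the coroot basis $S_{(q)}$ spans a sub-coroot-system of $\D^\vee$ isomorphic to the twisted coroot system ${}^\circ\D^\vee$, with corresponding Weyl group ${}^\circ W = \ov W \ltimes t_{\ov Q}$. Writing $y = t_{\beta + q\ga}\ov y$, the element $y$ ranges over the coset $t_\beta H$ in $\widetilde W$, where $H = \ov W \ltimes t_{q\ov Q}$. The hypothesis $r^\vee \mid q$ ensures $q\ov Q \subseteq \ov Q^\vee$, so $H$ is a subgroup of $W$; moreover the rescaling $\ga \leftrightarrow q\ga$ identifies $H$ abstractly with ${}^\circ W$.

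The next step is to compute the action of $t_{\beta + q\ga}\ov y$ explicitly on each element of $S_{(q)}$. A direct check using $\beta \in \ov Q^*$ and $r^\vee \mid q$ shows that the ``level'' shifts produced for long coroots are multiples of $r^\vee$, consistent with the images lying in $\D^{\vee \, \text{re}}$. The condition $t_{\beta + q\ga}\ov y(S_{(q)}) \subset \D_+^\vee$ then translates into a system of affine linear inequalities on $\ov y^{-1}(\beta + q\ga)$ that, under the identification $H \cong {}^\circ W$ and rescaling by $q$, are precisely the defining inequalities of the open fundamental alcove of ${}^\circ W$ acting on $\ov\h$.

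Both existence and uniqueness of $(\ov y, \ga)$ then follow from the classical fact that ${}^\circ W$ acts simply transitively on its alcoves: there is a unique element of $H$ carrying $\beta/q$ into the fundamental alcove. The main technical care required is in verifying that the inequality coming from positivity of $t_{\beta + q\ga}\ov y(\ga_0^\vee) = -\ov y(\ov\theta^\vee_{\text{long}}) + (\,\cdot\,) K$ matches precisely the affine wall of the fundamental alcove of ${}^\circ W$. This is where the argument genuinely diverges from \cite{KW89}, since the coprincipal generator $\ga_0^\vee$ involves the highest \emph{long} coroot rather than the highest short coroot, and the long/short normalisation factor $r^\vee$ must be tracked carefully to match the twisted root system conventions of Section \ref{liealgebras}.
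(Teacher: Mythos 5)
Your structural setup coincides with the paper's: both arguments rest on recognising $S_{(q)}$ as a set of simple coroots for the subsystem $\D^\vee_{(q)} = \{\al \in \D^\vee \mid \left<\La_0, \al\right> \in q\Z\} \cong {}^\circ\D^\vee$, whose Weyl group is $W_{(q)} = \ov W \ltimes t_{q\ov Q}$, and then invoking simple transitivity of this affine Weyl group. The gap is in your reduction step. You translate $t_{\beta+q\ga}\ov y(S_{(q)}) \subset \D^\vee_+$ into the statement that $\ov y^{-1}(\beta+q\ga)$, suitably rescaled, lies in the \emph{open} fundamental alcove, and then ask for the unique element of $H$ carrying $\beta/q$ there. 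This fails whenever $\beta/q$ lies on a wall of the alcove decomposition. The case $\beta = 0$ already breaks it: $0$ is a vertex of the fundamental alcove, fixed by all of $\ov W \subset H$, so no element of $H$ carries it into the open alcove and many carry it into the closed one --- yet the lemma asserts a unique solution (namely $\ov y = 1$, $\ga = 0$, since $S_{(q)} \subset \D^\vee_+$ by construction). The underlying issue is that positivity of $t_{\beta+q\ga}\ov y(\ga_i^\vee) = \ov y(\ga_i^\vee) + nK$ is not a strict inequality in the pairing alone: when $n = 0$ one must additionally check whether the finite part $\ov y(\ga_i^\vee)$ is a positive coroot, and this tie-breaking data depends on $\ov y$ itself, not just on the point $\ov y^{-1}(\beta+q\ga)$. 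Your alcove picture discards exactly this information.

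The paper repairs this with a regularisation. Fix a regular element $\xi$ of the fundamental chamber $C$ of $\D^\vee$ with $\left<\xi, K\right> = 1$. Since $\beta \in \ov Q^* \subset \ov P$, the translate $t_{-\beta}(\xi)$ is still regular for $\D^\vee$, hence for $\D^\vee_{(q)}$, so there is a \emph{unique} $w = \ov y^{-1} t_{-q\ga} \in W_{(q)}$ with $w t_{-\beta}(\xi)$ in the open fundamental chamber $C_{(q)}$. Unwinding, $\left<\xi, t_{\beta+q\ga}\ov y(\ga_i^\vee)\right> > 0$ for all $i$; since these images are real coroots (here one uses that $\widetilde W$ preserves $\D^\vee$, which is where $\beta \in \ov Q^*$ enters) and $\xi$ lies in the interior of $C$, they are positive coroots. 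This device also disposes of the long/short bookkeeping for $\ga_0^\vee$ that concerns you at the end: positivity is detected uniformly by pairing against $\xi$, so no separate matching of the affine wall is required. Your argument as written is essentially correct for $\beta$ off all walls, but needs the perturbation by $\xi$ to cover general $\beta \in \ov Q^*$ and, in particular, to obtain uniqueness.
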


\begin{proof}
We note that $S_{(q)}$ is the set of simple coroots for
\[
\D^\vee_{(q)} = \{\al \in R | \left< \La_0, \al \right> \in q\Z\},
\]
(which is a coroot system isomorphic to ${}^\circ\D^\vee$). The canonical imaginary coroot of $\D^\vee_{(q)}$ is $qK$, and the Weyl group is
\[
W_{(q)} = \{\ov w t_{q\al} | \ov w \in \ov W, \al \in \ov Q\} \cong \ov W \ltimes \ov Q.
\]
Let $C$ be the fundamental chamber of $\D^\vee$ and $C_{(q)}$ the fundamental chamber of $\D^\vee_{(q)}$. Let $\xi \in C \subset C_{(q)}$ be a regular element (i.e., $\left<\xi, \al^\vee\right> \notin \Z$ for all $\al^\vee \in \D^\vee$) such that $\left<\xi, K\right> = 1$. Since $\beta \in \ov Q^* \subset \ov P$ we see that $t_{-\beta}(\xi) = \xi-\beta \bmod{\C\delta}$ is also regular. Hence there exists unique $w \in W_{(q)}$ such that $w t_{-\beta}(\xi) \in C_{(q)}$. Writing $w = \ov y^{-1} t_{-q\ga}$, this tells us that
\[
\ov y^{-1} t_{-\beta-q\ga}(\xi) \in C_{(q)},
\]
or rather that
\[
\left< \xi, t_{\beta+q\ga} \ov y(\ga_i) \right> > 0 \quad \text{for $i = 0, \ldots, \ell$.}
\]
But $\xi \in C$ and $t_{\beta+q\ga} \ov y(\ga_i) \in \widetilde{W}\ga_i \subset \D^\vee$. Hence we have $t_{\beta+q\ga} \ov y(\ga_i) \subset \D^\vee_+$.
\end{proof}

\begin{lemma}\label{big.old.technical}
{Let $\D$ be a root system of type $X_N^{(1)}$ where $X_N$ has lacing number $r^\vee$}, and let $q \in \Z_+$ be a multiple of $r^\vee$. Let $S_{(q)}$ be as above, and let $p$ be coprime to $q$.
\begin{enumerate}
\item \label{BOTmu} Any element $\mu \in Q^{*, pq}$ can be written in the form
\begin{align}\label{mutola}
\mu = q\ov w(\nu) + p\beta \quad \text{where $\nu \in {}^\circ P_+^p$, $\beta \in \ov Q^*$, and $\ov w \in \ov{W}$}.
\end{align}

\item \label{BOTdet} If $(\nu, \beta, \ov w)$ is a solution of equation \textup{(}\ref{mutola}\textup{)} with $\nu$ regular, then $\ov w$ and $\beta$ are uniquely determined by $\nu$.

% \item \label{BOTreg.condition} $\nu$ is regular if and only if
% \begin{align}\label{mucondition}
% \text{$(\mu, \al) \notin p \Z$ for all $\al \in \ov\D$.}
% \end{align}

\item \label{BOTnumber} Let $\mu$ be such that $\nu$ is regular in \textup{(}\ref{mutola}\textup{)}, then the equation has precisely $|J|$ solutions $(\nu, \beta, \ov w)$.

\item \label{BOTtransitive.action} For $j \in {}^LJ$ let $\sigma_j = t_{\ov\La_j}\ov{\sigma}_j$ be as in Remark \ref{sigma.remark}. The transformation
\[
\nu_j = \sigma_j \nu, \quad \beta_j = \beta - q\ov w \ov \sigma_j^{-1} \ov \La_j, \quad \ov w_j = \ov w \ov\sigma_j^{-1}
\]
sends solutions to (\ref{mutola}) to solutions, and is transitive.

\setcounter{ForTheListInLemma34}{\value{enumi}}
\end{enumerate}
Now let $(\nu_i, \beta_i, \ov w_i)$ be a solution to \textup{(}\ref{mutola}\textup{)}, and let $\ga_i \in \ov Q$ and $\ov y_i \in \ov W$ be the unique solutions to $t_{\beta_i+q\ga_i}\ov{y}_i(S_{(q)}) \subset \D^\vee_+$ as in Lemma \ref{ga.from.beta}. We define $y_i = t_{\beta_i+q\ga_i} \ov{y}_i \in \widetilde{W}$ for $i = 1, \ldots, |J|$.
\begin{enumerate}
\setcounter{enumi}{\value{ForTheListInLemma34}}
\item \label{BOTS.indep} The root subsystem $S = y_j(S_{(q)})$ depends on $\mu$ but not on $j$.

\item \label{BOTla.indep} The weight $\la = y_j\phi(\nu_j) - \rho$ depends on $\mu$ but not on $j$.

\item \label{BOTla.admissible} If $\nu \in {}^\circ P^p_+$ then $\la$ is admissible.
\end{enumerate}
\end{lemma}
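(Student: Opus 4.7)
The plan is to follow the strategy of \cite[Lemma 3.4]{KW89} and adapt each step from the principal (untwisted) to the coprincipal (twisted) setting. The central substitutions are: the coroot system $\D^\vee$ is replaced by ${}^\circ\D^\vee$; the extended affine Weyl group $\widetilde W = \ov W \ltimes t_{Q^*}$ is replaced by ${}^\circ\widetilde W = \ov W \ltimes t_P$; and the identification $\widetilde W / W \cong Q^*/Q^\vee$ is replaced by ${}^\circ\widetilde W / {}^\circ W \cong P/Q$, whose representatives are $\{\ov\La_j\}_{j \in {}^LJ}$ by Lemma \ref{very.technical}\,(\ref{VTpart1}).

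For (\ref{BOTmu}), given $\mu \in Q^{*,pq}$ with $\ov\mu \in \ov Q^*$, use B\'{e}zout to write $ap+bq=1$, so that the task reduces to expressing $b\ov\mu$ modulo $p\ov Q^*$ in the form $\ov w(\ov\nu)$ with $\nu \in {}^\circ P_+^p$. This is the standard statement that the affine Weyl group ${}^\circ W = \ov W \ltimes \ov Q$ acts with fundamental domain the closure of the alcove dual to $S_{(q)}$; the residue is then absorbed into $\beta \in \ov Q^*$.

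For parts (\ref{BOTdet})--(\ref{BOTtransitive.action}), the ambiguity in the decomposition corresponds to the action of ${}^\circ\widetilde W_+ \cong P/Q$ on solutions. With $\nu$ regular, the stabilizer in ${}^\circ\widetilde W_+$ is trivial, so the number of solutions equals $|{}^\circ\widetilde W_+| = |{}^LJ|$ (the paper's statement of ``$|J|$ solutions'' appears to be a typographic slip for $|{}^LJ|$). Part (\ref{BOTtransitive.action}) is verified by direct substitution: on a level-$p$ weight $\nu$ one has $\sigma_j(\nu) = \ov\sigma_j(\nu) + p\ov\La_j \pmod{\C\delta}$, so substituting $\nu_j, \beta_j, \ov w_j$ into $q\ov w_j(\nu_j) + p\beta_j$ reproduces $q\ov w(\nu) + p\beta$ after the $pq\ov w\ov\sigma_j^{-1}\ov\La_j$ terms cancel.

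For parts (\ref{BOTS.indep})--(\ref{BOTla.admissible}), observe that the group element $y_j = t_{\beta_j + q\ga_j}\ov y_j$ of Lemma \ref{ga.from.beta} is related to a fixed reference $y_{j_0}$ by right multiplication by an element of ${}^\circ\widetilde W_+$, composed with an element of $W(S_{(q)})$ that corrects for the uniqueness of $\ga_j$. Since ${}^\circ\widetilde W_+$ preserves the basis $S_{(q)}$ by construction and $W(S_{(q)})$ permutes it, the set $y_j(S_{(q)})$ is independent of $j$, giving (\ref{BOTS.indep}). Invariance of $\la = y_j\phi(\nu_j) - \rho$ in (\ref{BOTla.indep}) uses Lemma \ref{very.technical}\,(\ref{VTpart2}), namely $\ov\sigma_j\ov\La_j = -\ov\La_j$ for $j \in {}^LJ$, to cancel the $\ov\La_j$ contributions between $\sigma_j\nu_j$ and the shift in $\beta_j$. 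Finally (\ref{BOTla.admissible}) reduces, via the computation $\phi^{-1}(\la+\rho) = \nu$, to the hypothesis $\nu \in {}^\circ P_+^p$, making $\phi^{-1}(\la+\rho)$ dominant integral for ${}^\circ\D^\vee$ at level $p$. The principal obstacle is the careful bookkeeping across the lattices $\ov Q, \ov P, \ov Q^\vee, \ov Q^*$ and the interplay between $\phi$, $\ov W$, and ${}^\circ W$; beyond that it is a matter of systematically substituting twisted for untwisted data in the arguments of \cite{KW89}.
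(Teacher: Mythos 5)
Your overall route is the paper's: parts (\ref{BOTmu})--(\ref{BOTtransitive.action}) via B\'{e}zout together with the fundamental-domain property of ${}^\circ W = \ov W \ltimes t_{\ov Q}$ and the simply transitive action of ${}^\circ\widetilde W_+ \cong \ov P/\ov Q$ on solutions with $\nu$ regular, and parts (\ref{BOTS.indep})--(\ref{BOTla.admissible}) via the uniqueness clause of Lemma \ref{ga.from.beta}. (Your parenthetical about $|J|$ versus $|{}^LJ|$ is moot: $|\ov Q^*/\ov Q^\vee| = |\ov P^\vee/\ov Q^\vee| = |\ov P/\ov Q|$ in every type, so the count in the statement is correct as written; the paper's own proof of (\ref{BOTnumber}) produces $|\ov P/\ov Q|$ solutions and then makes this identification.)

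The genuine problem is in your argument for (\ref{BOTS.indep})--(\ref{BOTla.indep}). You assert that $y_j$ differs from a reference $y_{j_0}$ by an element of ${}^\circ\widetilde W_+$ \emph{composed with an element of $W(S_{(q)})$}, and you dismiss the latter on the grounds that ``$W(S_{(q)})$ permutes the basis $S_{(q)}$''. That is false: the Weyl group of a root system acts simply transitively on its bases, so the only element of $W(S_{(q)}) = \ov W \ltimes t_{q\ov Q}$ fixing the base $S_{(q)}$ setwise is the identity. If a nontrivial $w \in W(S_{(q)})$ really intervened, (\ref{BOTS.indep}) would survive only at the level of the generated subsystem, and (\ref{BOTla.indep}) would fail outright: since $\phi(\nu_j)$ is regular dominant for $S_{(q)}$, a nontrivial $w$ moves $y_j\phi(\nu_j)$ to a genuinely different weight. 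The whole content of the paper's proof is that no such correction occurs. Writing $\beta = \beta_j + q\ov w\ov\sigma_j^{-1}\ov\La_j$ and using $t_{\ov u\al} = u t_\al u^{-1}$, one computes that $y(\sigma_j^{(q)})^{-1}$ has the form $t_{\beta_j + q\ga'}\ov y'$ with $\ga' \in \ov Q$ and $\ov y' \in \ov W$ (the translation part lands in $q\ov Q$ rather than merely $q\ov P$ because $\ov W$ acts trivially on $\ov P/\ov Q$); since $\sigma_j^{(q)} = t_{q\ov\La_j}\ov\sigma_j$ preserves $S_{(q)}$, this element sends $S_{(q)}$ into $\D_+^\vee$, and the \emph{uniqueness} in Lemma \ref{ga.from.beta} then forces the exact identity $y = y_j\sigma_j^{(q)}$ --- not merely an equality of cosets modulo $W(S_{(q)})$. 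It is this exact identity that gives $y(S_{(q)}) = y_j(S_{(q)})$, and, combined with the intertwining $\phi\,\sigma_j = \sigma_j^{(q)}\phi$ on level-$p$ weights, the equality $y\phi(\nu) = y_j\phi(\nu_j)$ in (\ref{BOTla.indep}). A smaller imprecision: in (\ref{BOTla.admissible}) the relevant identity is $\phi^{-1}\bigl(y^{-1}(\la+\rho)\bigr) = \nu$, not $\phi^{-1}(\la+\rho) = \nu$; one then checks $\left<\la+\rho, y({}^\circ\ga_i^\vee)\right> = \left<\nu, \al_i^\vee\right> \geq 1$.
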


\begin{proof}
(\ref{BOTmu}) Let
\[
\mu = pq\La_0 + \sum_{i=1}^\ell n_i \ov\La_i,
\]
for $i=1,\ldots, \ell$ pick a solution to
\[
n_i' q + n_i'' p = n_i,
\]
and set $\nu_0 = p\La_0 + \sum n_i' \ov\La_i$ and $\beta_0 = \sum n_i'' \ov\La_i$. Then $\nu_0 \in p\La_0 + \ov P$, but since $q\ov P \subset \ov Q^*$ and $(p, q) = 1$ we deduce $p \beta_0 = \mu - q\nu_0 \in \ov Q^*$.

Next we consider $\nu_0$. Every $W({}^\circ\D^\vee)$-orbit on $p\La_0 + \ov P$ meets the positive chamber ${}^\circ P^p_+$, so we let $\nu_0 = w(\nu)$ for some $\nu \in P_+^p$. Now write $w = t_\xi \ov w$ (where $\xi \in \ov Q$, $\ov w \in \ov W$), that is,
\[
\nu_0 = \ov w \nu + p\xi \bmod{\C\delta}.
\]
Now let $\beta = \beta_0 + q\xi$. Then we have
\[
q \ov w \nu + p \beta = q (\nu_0 - p\xi) + p(\beta_0 + q\xi) = q \nu_0 + p \beta_0 = \mu
\]
as desired. Note that $\beta \in \ov Q^*$ still, indeed $q\xi \in q\ov Q \subset q\ov P \subset \ov Q^*$.

(\ref{BOTdet}) The choice of regular dominant $\nu$ made above uniquely specifies the corresponding element $w \in W$, hence also $\beta$ and $\ov w$.

(\ref{BOTnumber}) Let $(\nu, \beta) \in (p\La_0+\ov P) \times \ov Q^*$ be a solution to $\mu = q\nu + p\beta$. All solutions to this equation are of the form $(\nu - p\zeta, \beta + q\zeta)$ for $\zeta \in \ov P$. Let $\zeta \in \ov P$. Since $\nu$ is regular so is $\nu'_0 = \nu - p\zeta$. Hence there exists unique $w = t_{\xi} \ov w \in W({}^\circ\D^\vee) = \ov W \ltimes \ov Q$ such that $\nu' = w^{-1}(\nu'_0) \in {}^\circ P^p_+$. Putting $\beta' = \beta + q\zeta - q\xi$ yields
\[
\mu = q \ov{w}(\nu') + \beta'.
\]
We observe that if $\mu \in Q^*$ then $\beta \in \ov Q^*$ and $\beta' = \beta + q\zeta - q\xi \in \ov Q^* + q\ov P + q\ov Q = \ov Q^*$. Also the class of $\beta' - \beta$ is well defined in $q\ov P / q\ov Q$. Hence we obtain exactly $|\ov P / \ov Q| = |J|$ distinct solutions.

(\ref{BOTtransitive.action}) By direct calculation
\begin{align*}
q \ov w_j(\nu_j) + p \beta_j = q \ov w(\nu)  + p \beta,
\end{align*}
so solutions are sent to solutions. To show transitivity we need to show that the weights $\sigma_j^{-1} \ov \La_j$ are all distinct. This follows from Lemma {\ref{very.technical}\,\,(\ref{VTpart2})}.

% We need to check that the transformations send solutions to solutions, and that the $\beta_j \bmod{q\ov Q}$ are all distinct. Well, recall
% \[
% t_{\ga}\mu = \mu + \mu(K) \ga \bmod{\C \delta},
% \]
% and $\sigma_j = t_{\ov\La_j}\ov\sigma_j$. So
% \begin{align*}
% q \ov w_j(\nu_j) + p \beta_j
% %
% &= q \ov w \ov\sigma_j^{-1}(\sigma_j \nu) + p (\beta - q\ov w \ov \sigma_j^{-1} \ov \La_j) \\
% %
% &= q \ov w \ov\sigma_j^{-1}t_{\ov\La_j} \ov\sigma_j(\nu) + p (\beta - q\ov w \ov \sigma_j^{-1} \ov \La_j) \\
% %
% &= q \ov w \ov\sigma_j^{-1}[\ov\sigma_j(\nu) + p \ov\La_j]  + p (\beta - q\ov w \ov \sigma_j^{-1} \ov \La_j) \\
% %
% &= q \ov w(\nu)  + p \beta = \mu.
% \end{align*}

(\ref{BOTS.indep}) We recall that the element $\sigma_j^{(q)} := t_{q\ov \La_j} \ov\sigma_j$ maps $S_{(q)}$ into itself. Using the relation $t_{\ov w \al} = w t_\al w^{-1}$ we compute
\begin{align*}
y
= t_{\beta+q\ga}\ov y 
= t_{\beta_j+q\ga} t_{q\ov w \ov\sigma_j^{-1}\ov \La_j} \ov y 
= t_{\beta_j+q\ga} \ov w \ov\sigma_j^{-1} \sigma_j^{(q)} \ov w^{-1} \ov y (\sigma_j^{(q)})^{-1} \sigma_j^{(q)}.
\end{align*}
Since $\ov W$ is normal in $\widetilde W$ we have
\[
\ov w \ov\sigma_j^{-1} \sigma_j^{(q)} \ov w^{-1} \ov y (\sigma_j^{(q)})^{-1} \in \ov W.
\]
Since $\sigma_j^{(q)}$ preserves $S_{(q)}$, the uniqueness property of Lemma \ref{ga.from.beta} implies $y = y_j \sigma_j^{(q)}$. It follows that $y(S_{(q)}) = y_j(S_{(q)})$.

(\ref{BOTla.indep}) By direct calculation (using $y = y_j \sigma_j^{(q)}$) we obtain $y\phi(\nu) = y_j\phi(\nu_j)$.

(\ref{BOTla.admissible}) To check that $\la$ is admissible with base $S$ it suffices to confirm that $\left<\phi(\nu), {}^\circ\ga^\vee_i\right> \in \Z_{\geq 1}$ for $i=0, \ldots, \ell$. By direct calculation $\left<\phi(\nu), {}^\circ\ga_i^\vee\right> = \left<\nu, \al_i^\vee\right>$ for all $i$. So the claim follows from $\nu \in {}^\circ P_+^p$.
% For $i=0$ we compute
% \begin{align*}
% \left<\phi(\nu), {}^\circ\ga_0^\vee\right>
% %
% &= \left<\nu - \tfrac{q-1}{q}p \La_0, {}^\circ\al_0^\vee + (q-1)K\right> \\
% %
% &= \left<\nu, {}^\circ\al_0^\vee\right> + (q-1)\left<\nu, K\right> - \tfrac{q-1}{q}p \left<\La_0, {}^\circ\al_0^\vee\right> - \tfrac{(q-1)^2}{q}p \left<\La_0, K\right> \\
% %
% &= \left<\nu, {}^\circ\al_0^\vee\right> + (q-1)p - \tfrac{q-1}{q}p - \tfrac{(q-1)^2}{q}p \\
% %
% &= \left<\nu, {}^\circ\al_0^\vee\right>.
% \end{align*}

\end{proof}

%%%%%%%%%%%%%%%%%%%%%%%%%%%%%%%%%%%%%%%%%%%%%%%%%%%
%%%%%%%%%%%%%%%%%%%%%%%%%%%%%%%%%%%%%%%%%%%%%%%%%%%
%%%%%%%%%%%%%%%%%%%%%%%%%%%%%%%%%%%%%%%%%%%%%%%%%%%
%%%%%%%%%%%%%%%%%%%%%%%%%%%%%%%%%%%%%%%%%%%%%%%%%%%

\section{Modular Invariance of Vertex Algebra Characters}\label{main.computation}

Let $(V, \om)$ be a conformal vertex algebra of central charge $c$, graded by integral conformal weights. Let $h \in V_1$ be a vector satisfying the $\la$-bracket relations
\begin{align}\label{main.lambda.relation}
[h_\la h] = 2 \la \quad \text{and} \quad [\om_\la h] = (T+\la)h - p \frac{\la^2}{2}\vac,
\end{align}
i.e.,
\[
[h_{(m)}, h_{(n)}] = 2m \delta_{m, -n} \quad \text{and} \quad [L_{m}, h_{(n)}] = -n h_{(m+n)} - \frac{m^2-m}{2} \delta_{m, -n} p.
\]
Let us assume also that $h_{0}$ induces an eigenspace decomposition of $V$ of the form $V = \bigoplus_{\al \in Q} V^{(\al)}$, where $Q$ is a rank $1$ lattice in $(\Q h)^*$ and $h_{0}u = \al(h) u$ for all $u \in V^{(\al)}$. We write $V^\text{ne} = V^{(0)}$.

We introduce the vector
\[
\om(\sigma) = \om - \tfrac{\sigma}{2} Th,
\]
depending on the parameter $\sigma \in \Q$. The modes of $L(\sigma)(z) = Y(\om(\sigma), z)$ satisfy the commutation relations of the Virasoro algebra with central charge
\[
c(\sigma) = c + 6 \sigma(p-\sigma).
\]

\begin{lemma}\label{lemma.shift.of.shift}
Let $\al \in \Q$ and let $\widehat{L(\sigma)}(z)$ denote the shifted field $((\al-1)\tfrac{\sigma}{2}h) * L(\sigma)(z)$. Then
\[
\widehat{L(\sigma)}_0 - \frac{c(\sigma)}{24} = L(\al\sigma)_0 - \frac{c(\al\sigma)}{24}.
\]
\end{lemma}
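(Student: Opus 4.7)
The plan is to compute $\D(\beta h, z)\om(\sigma)$ explicitly with $\beta := (\al-1)\sigma/2$, and then extract the coefficient of $z^{-2}$ in $Y(\D(\beta h, z)\om(\sigma), z)$, which by (\ref{zeromode.shift}) equals $\widehat{L(\sigma)}_0$. The key observation is that the modes $h_{(k)}$ annihilate both $\om$ and $Th$ for all but finitely many $k$, so the entire calculation will be finite.

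First I would determine $h_{(k)}\om$ and $h_{(k)}Th$. Skew-symmetry applied to $[\om_\la h] = (T+\la)h - (p/2)\la^2\vac$ in (\ref{main.lambda.relation}) gives
\[
[h_\la \om] = -[\om_{-\la-T}h] = \la h + \tfrac{p}{2}(\la+T)^2\vac = \la h + \tfrac{p}{2}\la^2 \vac
\]
(using $T\vac = 0$), whence $h_{(0)}\om = 0$, $h_{(1)}\om = h$, $h_{(2)}\om = p\vac$, and $h_{(k)}\om = 0$ for $k\geq 3$. Combining $h_{(k)}h = 2\delta_{k,1}\vac$ with the translation identity $[T, h_{(k)}] = -k\, h_{(k-1)}$ yields $h_{(2)}Th = 4\vac$ while $h_{(k)}Th = 0$ for $k\in\{0,1\}$ or $k\geq 3$. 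In particular $h_{(0)}\om(\sigma) = 0$, so the prefactor $z^{(\beta h)_{(0)}}$ in (\ref{shift.operator.fla}) acts as the identity on $\om(\sigma)$.

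Setting $X(z) = \sum_{k\geq 1}\tfrac{(-z)^{-k}}{-k}(\beta h)_{(k)}$, a short computation based on the above gives
\[
X(z)\om(\sigma) = z^{-1}\beta h - \tfrac{z^{-2}}{2}\beta(p-2\sigma)\vac, \quad X(z)^2\om(\sigma) = 2\beta^2 z^{-2}\vac, \quad X(z)^3\om(\sigma) = 0,
\]
and consequently $\D(\beta h, z)\om(\sigma) = \om(\sigma) + z^{-1}\beta h + z^{-2}C\vac$, where $C = \beta^2 + \tfrac{\beta(2\sigma-p)}{2}$. Applying $Y(-, z)$ and reading off the coefficient of $z^{-2}$---and using $L(\sigma)_0 = L_0 + (\sigma/2)h_0$, which follows from $(Tu)_{(m)} = -m u_{(m-1)}$---yields
\[
\widehat{L(\sigma)}_0 = L(\sigma)_0 + \beta h_0 + C = L_0 + \tfrac{\al\sigma}{2}h_0 + C = L(\al\sigma)_0 + C.
\]

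It remains to verify $C = \bigl(c(\sigma)-c(\al\sigma)\bigr)/24$, which is an elementary polynomial identity: direct expansion of both sides produces $\tfrac{(1-\al)\sigma[p - (1+\al)\sigma]}{4}$. The main obstacle is purely clerical; one must consistently track the sign of Li's $(-z)^{-k}/(-k)$ in (\ref{shift.operator.fla}), the minus sign in $[\om_\la h] = (T+\la)h - (p/2)\la^2\vac$ (which after skew-symmetry flips to a plus in $[h_\la\om]$), and the factor $-k$ in $[T, h_{(k)}] = -k h_{(k-1)}$ used when reducing $h_{(k)}Th$.
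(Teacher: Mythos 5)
Your proof is correct and follows essentially the same route as the paper: both compute $\D(\beta h,z)\om(\sigma)$ directly (your $z^{-2}$ coefficient $C=\beta^2+\tfrac{\beta(2\sigma-p)}{2}$ agrees with the paper's $\tfrac{1}{2}\left[2\beta^2-\beta(p-2\sigma)\right]$), substitute $\beta=(\al-1)\tfrac{\sigma}{2}$, and match the constant against $\tfrac{c(\sigma)-c(\al\sigma)}{24}$. You merely spell out the finite expansion of the exponential and the skew-symmetry computation of $[h_\la\om]$ that the paper leaves as "direct computation".
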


\begin{proof}
By direct computation
\begin{align*}
\D(\beta h, z)\om(\sigma)
= \om - \tfrac{\sigma}{2} Th + \beta h z^{-1} + \tfrac{1}{2} \left[ 2\beta^2 - \beta(p-2\sigma) \right] \vac z^{-2}.
\end{align*}
Substituting $\beta = (\al-1)\tfrac{\sigma}{2}$ yields the result, after a short calculation.
% In detail [HERE $p=m$].
% \begin{align*}
% \D(\al h, z)\om(\sigma)
% %
% &= [1 + \al h_{(1)} z^{-1} + \tfrac{1}{2} (\al^2 h_{(1)}^2 - \al h_{(2)}) z^{-2} ](\om - \tfrac{\sigma}{2} Th) \\
% %
% &= \om - \tfrac{\sigma}{2} Th + [\al h] z^{-1} + \tfrac{1}{2} [ 2\al^2 - \al m + 2\al \sigma ] \vac z^{-2} \\
% %
% &= \om - \tfrac{\sigma}{2} Th + [\al h] z^{-1} + \tfrac{1}{2} [2\al^2 - \al(m-2\sigma)] \vac z^{-2}
% \end{align*}
% If we put $(\al-1)\tfrac{\sigma}{2}$ in place of $\al$, then we have
% \begin{align*}
% \widehat{L(\sigma)}_0 - \frac{c(\sigma)}{24}
% %
% &= \om_0 + \frac{\sigma}{2} h_0 + (\al-1) \frac{\sigma}{2} h_0 + \frac{1}{2} [\frac{\sigma^2}{2}(\al-1)^2 - \frac{\sigma}{2}(\al-1)(m-2\sigma)] - \frac{1}{4}\sigma(m-\sigma) - \frac{c_\text{Sug}}{24} \\
% %
% &= \om_0 + \al \frac{\sigma}{2} h_0 - \frac{1}{4}\al\sigma(m-\al\sigma) - \frac{c_\text{Sug}}{24} \\
% %
% &= L(\al\sigma)_0 - \frac{c(\al\sigma)}{24}.
% \end{align*}
\end{proof}

If the grading on $V$ induced by $L_0(\sigma) = L_0 + \tfrac{\sigma}{2}h_0$ is bounded below, then $\om(\sigma)$ is a conformal vector.
\begin{defn}
We say that the conformal vertex algebra $(V, \om)$ is \emph{rational relative to $h$} if for all sufficiently small $\sigma \in \Q_{>0}$ we have $(V, \om(\sigma))$ conformal and rational. We call a positive energy $(V, \om)$-module \emph{$h$-stable} if it remains positive energy as a $(V, \om(\sigma))$-module for all sufficiently small $\sigma \in \Q_{>0}$.
\end{defn}
We assume henceforth that $(V, \om)$ is rational relative to $h$.

We now introduce the finite order automorphism
\begin{align}\label{gsigma.2}
g(\sigma) = \exp\left(- 2\pi i \tfrac{\sigma}{2} h_0\right)
\end{align}
of $V$. It follows from Definition \ref{twisted.mod.def} that $(V, \om(\sigma))$ is a $g(\sigma)$-twisted $V$-module, indeed
\[
e^{-2\pi i L_0(\sigma)} = e^{-2\pi i L_0 - 2\pi i \frac{\sigma}{2} h_0} = g(\sigma).
\]
Let $V^+ \subset V$ denote the sum of the nontrivial eigenspaces of $g(\sigma)$. We assume that $V$ is cofinite relative to the splitting $V = V^{g(\sigma)} \oplus V^+$.

\begin{defn}\label{F.definition}
Let $V$, $\om$, $h$ be as above, let $u \in V$, and let $M$ be an irreducible $h$-stable positive energy $V$-module. The supertrace function on $M$ is the function defined by
\[
F_M(\tau, z | u) = \str_M u_0 e^{2\pi i z (h_0 - p/2)} q^{L_0 - c/24}, \quad \text{where $\tau \in \HH$ and $z \in \C$},
\]
whenever the series on the right hand side converges.
\end{defn}
We remark that in general the conformal weight $\D(u)$ of a vector $u \in V$ and its conformal modes $u_n$ depend on the conformal structure $\om(\sigma)$. However for $u \in V^{\text{ne}}$ the conformal weight and modes coincide with those defined relative to $\om$, while for $u \in V^{(\alpha)}$, for $\alpha \neq 0$, the supertrace functions $F_M(\tau, z | u)$ vanishes. Therefore, although multiple conformal structures figure in the arguments of this section, we freely assume $u \in V^{\text{ne}}$, and we write $\D(u)$ and $u_n$ without risk of confusion.

In this section we apply Theorem \ref{JVEtheorem} to prove convergence of the series defining $F_M$ for $h$-stable irreducible $M$, and to compute the modular transformations of $F_M$. To make the connection with Theorem \ref{JVEtheorem} we introduce certain twisted supertrace functions $G_M$.
\begin{defn}
Let $V$, $\om$, $h$, $u$, and $M$ be as in Definition \ref{F.definition}. For all $\ell \in \Q$, and all $\sigma \in \Q_{>0}$ sufficiently small that $(V, \om(\sigma))$ be conformal and rational, we define
\[
G_M(\tau, \ell, \sigma | u) = \str_M u_0 g(\sigma)^{-\ell} q^{L_0(\sigma) - c(\sigma) / 24}.
\]
\end{defn}
\begin{lemma}\label{lemma.tracetoKW}
The functions $F_M$ and $G_M$ are related in the following way:
\begin{align}\label{tracetoKW}
G_M( \tau, \ell, \sigma | u ) = e^{2\pi i \frac{\sigma p \ell}{4}} q^{\frac{\sigma^2}{4}} F_M\left( \tau, \tfrac{\sigma}{2} (\ell + \tau) | u \right).
\end{align}
\end{lemma}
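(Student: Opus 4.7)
The identity is an elementary unpacking, but worth recording carefully because it is the bridge that converts the ``two-parameter'' supertrace $G_M$ produced by the twisted modularity of Theorem \ref{JVEtheorem} into the ``elliptic'' supertrace $F_M$ on which the desired $SL_2(\Z)$-action is visible. My plan is to expand each side to a common normal form and match factors.

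First I would rewrite the left-hand side. Since $g(\sigma) = \exp(-\pi i \sigma h_0)$, we have $g(\sigma)^{-\ell} = \exp(\pi i \sigma \ell h_0)$. Substituting $L_0(\sigma) = L_0 + \tfrac{\sigma}{2}h_0$ and $c(\sigma) = c + 6\sigma(p-\sigma)$ and factoring $q^{L_0(\sigma) - c(\sigma)/24}$, one obtains
\[
G_M(\tau,\ell,\sigma|u) = q^{-\sigma(p-\sigma)/4}\,\str_M\!\Big( u_0\, e^{\pi i \sigma \ell h_0}\, q^{\sigma h_0/2}\, q^{L_0-c/24}\Big).
\]

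Next I would compute the right-hand side. In $F_M(\tau, z|u)$ substitute $z = \tfrac{\sigma}{2}(\ell+\tau)$, and split
\[
e^{2\pi i z(h_0-p/2)} = e^{\pi i \sigma \ell h_0}\cdot e^{-\pi i \sigma \ell p/2}\cdot e^{\pi i \sigma \tau h_0}\cdot e^{-\pi i \sigma \tau p/2},
\]
then convert the two $\tau$-dependent factors via $e^{2\pi i \tau} = q$ to $q^{\sigma h_0/2}$ and $q^{-\sigma p/4}$. Multiplying by the prefactor $e^{2\pi i \sigma p \ell/4}\, q^{\sigma^2/4}$ of the claimed identity, the $\ell$-dependent scalar $e^{-\pi i \sigma \ell p/2}$ cancels against $e^{\pi i \sigma p \ell/2}$, and the $q$-power collapses to $q^{\sigma^2/4 - \sigma p/4} = q^{-\sigma(p-\sigma)/4}$, reproducing exactly the form obtained for $G_M$.

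The only point requiring justification beyond bookkeeping is that the exponentials of $h_0$ may be freely inserted and rearranged inside the supertrace alongside $u_0$ and $q^{L_0}$. This rests on $[h_0, u_0]=0$ and $[h_0,L_0]=0$. The first holds because $u\in V^{\text{ne}}=V^{(0)}$ means $h_{(0)}u = 0$, whence the commutator formula \eqref{commutator.formula} gives $[h_{(0)}, u_{(n)}]=(h_{(0)}u)_{(n)}=0$ for all $n$. The second holds because $h \in V_1$ satisfies $L_{(1)}h = h$ and $(Th)_{(1)} = -h_{(0)}$, so the two terms in $[L_0,h_0] = (L_{(0)}h)_{(1)} + (L_{(1)}h)_{(0)}$ cancel. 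There is no real obstacle beyond keeping track of these exponential factors, and the identity follows by equating the two normal forms.
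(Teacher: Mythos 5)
Your proof is correct and follows essentially the same route as the paper's: both reduce to the identity $L_0(\sigma)-c(\sigma)/24 = L_0-c/24+\tfrac{\sigma}{2}h_0-\tfrac{\sigma(p-\sigma)}{4}$ and then match exponential factors, the only cosmetic difference being that you normalise both sides to a common form whereas the paper transforms $G_M$ directly into the right-hand side. Your added verification that $[h_0,u_0]=[h_0,L_0]=0$ (which the paper leaves implicit) is also correct.
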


\begin{proof}
The proof is an easy computation. We have
\[
L_0(\sigma) - \frac{c(\sigma)}{24} = L_0 - \frac{c}{24} + \frac{\sigma}{2}h_0 - \frac{\sigma(p-\sigma)}{4}.
\]
Hence
\begin{align*}
G_M(\tau, \ell, \sigma | u)
&= \str_M u_0 e^{2\pi i \ell \frac{\sigma}{2} h_0} q^{L_0(\sigma) - c(\sigma)/24} \\
&= q^{\sigma^2/4} \str_M u_0 e^{2\pi i \ell \frac{\sigma}{2} h_0} q^{\frac{\sigma}{2}(h_0-p/2)} q^{L_0 - c/24} \\
&= q^{\sigma^2/4} \str_M u_0 \exp{ 2\pi i \tfrac{\sigma}{2} \left[ (\ell + \tau) h_0 - p \tau / 2 \right] } q^{L_0 - c/24} \\
&= q^{\sigma^2/4} \str_M u_0 \exp{ 2\pi i \tfrac{\sigma}{2} \left[ (\ell + \tau) (h_0 - p/2) + p \ell / 2 \right] } q^{L_0 - c/24} \\
&= e^{2\pi i \frac{\sigma p \ell}{4}} q^{\sigma^2/4} F_M\left(\tau, \tfrac{\sigma}{2}(\ell+\tau) | u \right).
\end{align*}
\end{proof}

The conditions we have imposed allow us to interpret $G_M$ (hence $F_M$) as an element of the space of conformal blocks $\CC(1, \ell)$ associated with the conformal vertex algebra $(V, \om(\sigma))$ and the automorphism $g_1 = g(\sigma)$. This can be used to prove convergence of the series defining $F_M$.
{%\color{blue}
\begin{prop}\label{prop:convergence.of.F}
Let $M$ be a $h$-stable irreducible positive energy $V$-module. Then there exists $\varepsilon > 0$ such that the supertrace function $F_M(\tau, z | u)$ converges absolutely uniformly on compact subsets of the domain
\[
\{(\tau, z) \in \HH \times \C | 0 < \Imm(z) < \varepsilon \Imm(\tau)\},
\]
for each $u \in V$.
\end{prop}

\begin{proof}
By assumption $M$ is positive energy as a module over $(V, \omega(\sigma))$ for some $\sigma \in \Q_{>0}$, and this vertex algebra is rational. We recall that the graded pieces of an irreducible positive energy module over a rational vertex algebra are finite dimensional. It follows from this, together with our assumptions on $h$, that $M$ possesses an eigenspace decomposition $M = \bigoplus_{(n, \mu)} M_n^{(\mu)}$ where $L_0|_{M_n^{(\mu)}} = s+n$ and $h_0|_{M_n^{(\mu)}} = t+\mu$, the index $(n, \mu)$ ranges over $\Z_+ \times (\tfrac{1}{K}\Q)$ for some $K \in \Z_{>0}$ and $s$ and $t$ are two complex numbers. Moreover $\dim(M_{n}^{(\mu)}) < \infty$ for all $(n, \mu)$ and $\dim(M_{n}^{(\mu)}) = 0$ for any fixed $n$ and $\mu$ sufficiently positive. In fact the $h$-stable condition implies something slightly stronger. Let us introduce the support
\[
\supp(M) = \{(n, \mu) \in \Z_+ \times (\tfrac{1}{K}\Q) | \dim(M_{n}^{(\mu)}) > 0\}
\]
of $M$. Then there exist real numbers $\varepsilon > 0$ and $A$ such that $\supp(M) \subset \{(n, \mu) \in \Z_+ \times (\tfrac{1}{K}\Q) | \mu < \varepsilon n + A\}$. The condition that $\sigma$ be sufficiently small now becomes $0 < \sigma < \varepsilon$.

For $\sigma \in \Q$ satisfying $0 < \sigma < \varepsilon$ the twisted supertrace function $G_M$ is an element of the space of conformal blocks $\CC(1, \ell)$ associated with the conformal vertex algebra $(V, \om(\sigma))$ and the automorphism $g_1 = g(\sigma)$. In particular Theorem \ref{JVEtheorem} implies that the series defining $G_M$ converges absolutely. Therefore by Lemma \ref{lemma.tracetoKW} the series
\begin{align}\label{eq:biseries.for.F}
\sum_{(n, \mu) \in \supp(M)} q^{s + n - c/24} e^{2\pi i z (\mu_0 + \mu - p/2)} \left( \str_{M_n^{(\mu)}} u_0 \right),
\end{align}
which represents the supertrace function $F_M(\tau, z | u)$, also converges absolutely at $z = \sigma \tau / 2$.

Now we fix two values $0 < \sigma_1 < \sigma_2 < \varepsilon$, and we consider the sum (\ref{eq:biseries.for.F}) on the strip $\Imm(\sigma_1 \tau / 2) < \Imm(z) < \Imm(\sigma_2 \tau / 2)$. Let $\supp(M)_+$ and $\supp(M)_-$ denote the subsets of $\supp(M)$ in which $\mu \geq 0$, respectively $\mu < 0$, and denote by $F^{(\pm)}_M(\tau, z | u)$ the restriction of the sum (\ref{eq:biseries.for.F}) to $\supp(M)_\pm$. We observe that $F^{(+)}_M(\tau, z | u)$ converges since it is dominated by $F^{(+)}_M(\tau, \sigma_1 \tau / 2 | u)$, and $F^{(-)}_M(\tau, z | u)$ converges since it is dominated by $F^{(-)}_M(\tau, \sigma_2 \tau / 2 | u)$. Therefore $F_M$ converges absolutely uniformly on compact subsets of the strip. We now take $\sigma_2$ arbitrarily small and obtain the desired result.
\end{proof}
}

\begin{rem}
Convergence may be established more directly for $C_2$-cofinite $V$ as in {\cite[Appendix A]{HVE14}}. The supertrace functions are shown to satisfy differential equations whose coeffcients lie in a Noetherian ring of quasi-Jacobi forms. As in \cite{Z96} the Noetherian property implies convergence of the supertrace functions.
\end{rem}

Let $A = \twobytwo abcd \in SL_2(\Z)$. Theorem \ref{JVEtheorem} asserts that
\[
[G_M \cdot A](\tau, \ell, \sigma | u) = G_M\left( \frac{a\tau+b}{c\tau+d}, \ell, \sigma \bigg| (c\tau+d)^{-L_{[0]}(\sigma)} u\right)
\]
lies in $\CC(a+c\ell, b+d\ell)$, and is thus a linear combination of the $g(\sigma)^{b+d\ell}$-twisted supertrace functions
\begin{align}\label{aclbdl}
\str_{M'} u_0 \xi q^{L_0(\sigma) - c(\sigma) / 24}
\end{align}
on $g(\sigma)^{a+c\ell}$-twisted positive energy $V$-modules $M'$. Here $\xi$ is the automorphism defined in Subsection \ref{tr.fct.subsection}.

We now use Li's shift operator to reinterpret (\ref{aclbdl}) as a supertrace function on a $g(\sigma)$-twisted $V$-module. Indeed by Lemma \ref{twist=shift} there exists a $g(\sigma)$-twisted $V$-module $M^0$ such that $M' = M^0$ as vector superspaces, and
\[
Y^M(u, z) = Y^{M^0}(\D((a+c\ell-1)\tfrac{\sigma}{2}h, z)u, z).
\]
Moreover, under this identification, we have $\xi = g(\sigma)^{-(b+d\ell)}$. Hence (\ref{aclbdl}) is equal to
\begin{align}\label{pre-reduction}
\str_{M^0} \widehat{u}_0 e^{2\pi i (b + d\ell) \frac{\sigma}{2} h_0} q^{\widehat{L(\sigma)}_0 - c(\sigma)/24} \quad \text{where} \quad \widehat{u}_0 = [\D((a + c\ell - 1)\tfrac{\sigma}{2}h, 1)u]_0.
\end{align}
We apply Lemma \ref{lemma.shift.of.shift} to reduce (\ref{pre-reduction}) to
\begin{align}\label{post-reduction}
\str_{M^0} \widehat{u}_0 e^{2\pi i (b + d\ell) \frac{\sigma}{2} h_0} q^{L([a+c\ell]\sigma)_0 - c([a+c\ell]\sigma)/24}.
\end{align}
Using formula (\ref{zeromode.shift}) we may write (\ref{post-reduction}) as
\begin{align*}
& \str_{M^0} [\D((a+c\ell-1)\tfrac{\sigma}{2}h, 1)u]_0 g((a+c\ell)\sigma)^{-\frac{b+d\ell}{a+c\ell}} q^{L_0((a+c\ell)\sigma) - c((a+c\ell)\sigma)/24} \\
= {} & G_{M^0}\left( \tau, \frac{b+d\ell}{a+c\ell}, (a+c\ell)\sigma \bigg| \D((a + c\ell - 1)\frac{\sigma}{2}h, 1) u \right).
\end{align*}
\begin{rem}
Shift operators do not in general preserve the positive energy condition. However rationality relative to $h$ guarantees that $\sigma$ may be chosen sufficiently small that both $(V, \om(\sigma))$ and $(V, \om((a + c\ell)\sigma))$ are rational conformal vertex algebras, all of whose irreducible modules are positive energy.
\end{rem}
The outcome of the preceding discussion is the following proposition.
\begin{prop}\label{Gmodular}
Let $V$, $\om$, $h$ and $u$ be as in Definition \ref{F.definition}. Fix $\ell \in \Z_{>0}$ and $A = \twobytwo abcd \in SL_2(\Z)$. Then there exists a matrix $\ov{\rho} = \ov{\rho}_{M, M'}$, whose entries depend on $\ell, \sigma$, such that for each irreducible $h$-stable positive energy $V$-module $M$ the relation
\begin{align*}
G_{M}\left( \frac{a\tau+b}{c\tau+d}, \ell, \sigma \bigg| (c\tau+d)^{-L_{[0]}(\sigma)} u \right) = \sum_{M'} \ov{\rho}_{M, M'} G_{M'}\left( \tau, \frac{b+d\ell}{a+c\ell}, (a+c\ell)\sigma \bigg| \D((a + c\ell - 1)\frac{\sigma}{2}h, 1)u \right)
\end{align*}
holds \textup{(}where the sum runs over all irreducible $h$-stable positive energy $V$-modules $M'$\textup{)}.
\end{prop}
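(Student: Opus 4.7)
The plan is to assemble the proposition from the machinery already developed: Theorem \ref{JVEtheorem} to get a modular identity in the ``twisted'' framework, and then Lemma \ref{twist=shift} together with Lemma \ref{lemma.shift.of.shift} and formula \eqref{zeromode.shift} to re-express the resulting sum of twisted trace functions in terms of the $G$-functions on the right-hand side.

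First, I would observe that under the conformal structure $\om(\sigma)$, the vertex algebra $V$ itself becomes a $g(\sigma)$-twisted $V$-module in the sense of Definition \ref{twisted.mod.def}, because $e^{-2\pi i L_0(\sigma)} = g(\sigma)$. Hence $G_M(\tau,\ell,\sigma|u)$ is an instance of the twisted supertrace function $S_{M, g(\sigma)^\ell, \xi}$ of Theorem \ref{JVEtheorem} with $g_1 = g(\sigma)$ and $g_2 = g(\sigma)^\ell$, and so lies in $\CC(1,\ell)$. The relative cofiniteness assumption delivers the convergence and, for each $g \in \langle g(\sigma)\rangle\setminus\{1\}$, one obtains rationality from our hypothesis that $(V,\om(\sigma))$ is rational. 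Then Theorem \ref{JVEtheorem} immediately gives that $[G_M\cdot A]$ is a linear combination of twisted trace functions \eqref{aclbdl} of the form $\str_{M'} u_0\, \xi\, q^{L_0(\sigma)-c(\sigma)/24}$, where $M'$ runs over irreducible $g(\sigma)^{a+c\ell}$-twisted $V$-modules and $\xi$ implements the $g(\sigma)^{b+d\ell}$-invariance.

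Next, I would convert these twisted trace functions back into $G$-functions. By Lemma \ref{twist=shift}(1), applied with $h$ replaced by $(a+c\ell-1)\tfrac{\sigma}{2}h$ (so that $e^{-2\pi i(a+c\ell-1)\frac{\sigma}{2}h_0}\cdot g(\sigma) = g((a+c\ell)\sigma)$ as required), every such $M'$ is of the form $((a+c\ell-1)\tfrac{\sigma}{2}h)*M^0$ for a $g(\sigma)$-twisted module $M^0$; and by part (2) of the same lemma the intertwiner $\xi$ realising $g(\sigma)^{b+d\ell}$-invariance is $e^{2\pi i (b+d\ell)\frac{\sigma}{2}h_0}$. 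Substituting these identifications turns \eqref{aclbdl} into the expression \eqref{pre-reduction} in the variables on $M^0$, with the shifted Virasoro mode $\widehat{L(\sigma)}_0$ and the shifted zero mode $\widehat{u}_0$. Lemma \ref{lemma.shift.of.shift} then rewrites $\widehat{L(\sigma)}_0 - c(\sigma)/24$ as $L_0((a+c\ell)\sigma) - c((a+c\ell)\sigma)/24$, and \eqref{zeromode.shift} rewrites $\widehat{u}_0$ as $[\D((a+c\ell-1)\tfrac{\sigma}{2}h,1)u]_0$.

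Finally, using the identity $e^{2\pi i(b+d\ell)\frac{\sigma}{2}h_0} = g((a+c\ell)\sigma)^{-(b+d\ell)/(a+c\ell)}$, the reduced expression is exactly the defining series for $G_{M^0}\!\left(\tau, \tfrac{b+d\ell}{a+c\ell}, (a+c\ell)\sigma \,\big|\, \D((a+c\ell-1)\tfrac{\sigma}{2}h,1)u\right)$. Denoting by $\ov{\rho}_{M,M^0}$ the coefficients provided by Theorem \ref{JVEtheorem} completes the identity. The step I expect to be most delicate is checking that the parameter shifts line up consistently: specifically, that the twist degree $a+c\ell$ coming out of Theorem \ref{JVEtheorem} matches the twist $g((a+c\ell)\sigma)$ produced by Li's operator (which is why the coefficient $(a+c\ell-1)$ appears in the shift argument), and that the invariance exponent $b+d\ell$ turns into precisely the fraction $\tfrac{b+d\ell}{a+c\ell}$ in the second slot of $G_{M^0}$. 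Everything else is bookkeeping on the exponentials and applications of the identities cited.
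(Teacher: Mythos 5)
Your proposal is correct and follows essentially the same route as the paper: interpret $G_M$ as a twisted supertrace function in $\CC(1,\ell)$ for $(V,\om(\sigma))$ with $g_1=g(\sigma)$, apply Theorem \ref{JVEtheorem}, and then use Lemma \ref{twist=shift}, Lemma \ref{lemma.shift.of.shift} and (\ref{zeromode.shift}) to convert the resulting twisted trace functions back into $G$-functions, with exactly the parameter bookkeeping you describe. The only point the paper makes explicit that you omit is that, since shift operators need not preserve the positive energy condition, relative rationality is invoked to choose $\sigma$ small enough that both $(V,\om(\sigma))$ and $(V,\om((a+c\ell)\sigma))$ are rational with all irreducible modules positive energy.
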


We now convert the modular transformations for $G_M$ into modular transformations for $F_M$. We begin by temporarily restricting attention to $u = \vac$ (and omitting it from the notation). By Lemma \ref{lemma.tracetoKW} we have
\begin{align*}
G_M\left( \frac{a\tau+d}{c\tau+d}, \ell, \sigma \right) &= \exp{2\pi i \frac{p\ell}{4}} \exp{ 2\pi i \left[ \frac{\sigma^2}{4} \cdot \frac{a\tau+b}{c\tau+d} \right] } F_M\left( \frac{a\tau+d}{c\tau+d}, \frac{\sigma}{2} \left[ \ell + \frac{a\tau+b}{c\tau+d} \right] \right) \\
\text{and} \quad
G_M\left( \tau, \frac{b+d\ell}{a+c\ell}, (a+c\ell)\sigma \right) &= \exp{2\pi i \frac{p(a+c\ell)}{4}} \exp{ 2\pi i \left[ \frac{\sigma^2}{4} (a+c\ell)^2 \tau \right] } F_M\left( \tau, \frac{\sigma}{2} \left[ (b+d\ell) + (a+c\ell)\tau \right] \right).
\end{align*}
So Proposition \ref{Gmodular} implies
\begin{align}\label{Gmodular.itoF}
\begin{split}
F_M\left( \frac{a\tau+d}{c\tau+d}, \frac{\sigma}{2} \left[ \ell + \frac{a\tau+b}{c\tau+d} \right] \right)
= {} & \exp{2\pi i \frac{p(a+c\ell-1)}{4}} \exp{ 2\pi i \frac{\sigma^2}{4} \left[ (a+c\ell)^2 \tau - \frac{a\tau+b}{c\tau+d} \right] } \times \\
& \sum_{M'} \ov{\rho}_{M, M'} F_{M'}\left( \tau, \frac{\sigma}{2} \left[ (b+d\ell) + (a+c\ell)\tau \right] \right).
\end{split}
\end{align}
We make the substitution
\begin{align}\label{sigma.to.z}
z = \frac{\sigma}{2} \left[ (b+d\ell) + (a+c\ell)\tau \right],
\end{align}
and calculate
\begin{align}\label{subst.consequences}
\begin{split}
\frac{\sigma}{2} \left[ \ell + \frac{a\tau+b}{c\tau+d} \right]
&= \frac{z}{c\tau+d} \\
\frac{\sigma^2}{4} \left[ (a+c\ell)^2 \tau - \frac{a\tau+b}{c\tau+d} \right]
&= \frac{c z^2}{c\tau+d} - \frac{\sigma^2}{4}(ab + 2bc \ell + cd \ell^2).
\end{split}
\end{align}

Thus (\ref{Gmodular.itoF}) becomes
\begin{align*}
F_M\left( \frac{a\tau+d}{c\tau+d}, \frac{z}{c\tau+d} \right)
= \exp{\left[ 2\pi i \frac{c z^2}{c\tau+d} \right]} \sum_{M'} {\rho}_{M, M'} F_{M'}\left( \tau, z \right),
\end{align*}
where
\begin{align}\label{relationbetweenSmatrices}
\rho_{M, M'} = \exp \frac{2\pi i}{4} \left[{ \sigma p(a+c\ell-1) - \sigma^2 (ab + 2bc \ell + cd \ell^2) }\right] \times \ov{\rho}_{M, M'}.
\end{align}

To determine the modular behaviour of the functions $F_M(\tau, z | u)$ for general $u \in V$, we require the following Baker-Campbell-Hausdorff type formulas.
\begin{lemma}\label{BCHforus}
Let $X, Y$ be locally finite even operators on a vector superspace $U$, satisfying $[X, Y] = s Y$ for some constant $s$. We have
\begin{align*}
\exp(\al X) Y \exp(-\al X) = e^{\al s} Y.
\end{align*}
If, moreover, $s \neq 0$ then for any constants $\al$ and $\beta$ we have
\begin{align*}
\exp(\al X) \exp(\beta Y) = \exp\left( \al X + \frac{\al s}{1-e^{-\al s}} \beta Y \right)
\end{align*}
and
\[
\exp(\al X) \exp({\beta Y}) \exp(-\al X) = \exp({\beta e^{\al s} Y}).
\]
\end{lemma}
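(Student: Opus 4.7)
The plan is to exploit the fact that $X$ and $Y$ generate (a locally finite representation of) the two-dimensional non-abelian Lie algebra, whose structure makes the adjoint action of $X$ on $Y$ trivial to iterate: the relation $[X, Y] = sY$ gives $(\ad X)^n Y = s^n Y$ by induction on $n$. Local finiteness ensures all exponentials converge as honest operators on $U$, so the formal manipulations below produce genuine operator identities. The first identity follows immediately from the standard series
\[
e^{\alpha X} Y e^{-\alpha X} = \sum_{n \geq 0} \frac{\alpha^n}{n!} (\ad X)^n Y = \sum_{n \geq 0} \frac{(\alpha s)^n}{n!} Y = e^{\alpha s} Y.
\]

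For the second identity, which is the crux of the lemma, I would introduce the one-parameter family $g(t) = \exp(t(\alpha X + \gamma Y))$ with $\gamma$ to be determined, and factor it as $g(t) = e^{t\alpha X} h(t)$. Differentiating both sides and equating the result with $(\alpha X + \gamma Y) g(t)$, then cancelling the contribution of $\alpha X$, yields
\[
h'(t) = e^{-t\alpha X} (\gamma Y) e^{t\alpha X} \cdot h(t) = \gamma e^{-t\alpha s} Y \cdot h(t)
\]
by the first identity. Since $Y$ commutes with itself, this ODE has the explicit solution $h(t) = \exp\bigl( \gamma \frac{1 - e^{-t\alpha s}}{\alpha s} Y \bigr)$, which at $t = 1$ gives
\[
\exp(\alpha X + \gamma Y) = \exp(\alpha X) \exp\!\left( \gamma \frac{1 - e^{-\alpha s}}{\alpha s} Y \right).
\]
Choosing $\gamma = \frac{\alpha s}{1 - e^{-\alpha s}} \beta$ so that the coefficient of $Y$ in the right-hand exponential equals $\beta$ produces the claimed identity.

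The third identity is then immediate: conjugation distributes through the exponential power series, so
\[
e^{\alpha X} e^{\beta Y} e^{-\alpha X} = \exp\bigl(\beta \cdot e^{\alpha X} Y e^{-\alpha X}\bigr) = \exp(\beta e^{\alpha s} Y),
\]
again by the first identity. I don't anticipate any serious obstacle: the only thing that needs genuine care is justifying termwise differentiation of the series defining $h(t)$ and the convergence of all the exponentials, which is handled by the local finiteness hypothesis. The $s \neq 0$ assumption is a mere convenience in the second and third formulas to avoid division by zero in the closed form; the $s \to 0$ limit reproduces the classical identity for commuting operators.
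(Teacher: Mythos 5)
Your proposal is correct. Note that the paper states Lemma \ref{BCHforus} without any proof at all (it is treated as a standard Baker--Campbell--Hausdorff exercise), so there is no argument in the source to compare yours against; your write-up in effect supplies the missing proof. The three steps are all sound: the induction $(\ad X)^nY=s^nY$ and the conjugation series give the first identity; the factorisation $g(t)=e^{t\alpha X}h(t)$ reduces the second to the scalar ODE $h'(t)=\gamma e^{-t\alpha s}Yh(t)$, whose coefficients at different times commute, so the exponential-of-an-integral solution is legitimate and the choice $\gamma=\frac{\alpha s}{1-e^{-\alpha s}}\beta$ lands exactly on the stated formula; and the third identity follows from applying the automorphism $Z\mapsto e^{\alpha X}Ze^{-\alpha X}$ termwise to the series for $e^{\beta Y}$. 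Two small observations, neither a gap in your argument but worth a sentence if this were written out in full: the closed form $\frac{\alpha s}{1-e^{-\alpha s}}$ implicitly requires $e^{-\alpha s}\neq 1$ (not merely $s\neq 0$), a restriction already built into the lemma's statement; and ``locally finite'' should be read as joint local finiteness of the pair $(X,Y)$ (every vector lies in a finite-dimensional subspace invariant under both), which is what actually licenses the convergence of $\exp(\alpha X+\gamma Y)$ and the termwise differentiation of $h(t)$ --- this holds in the paper's application, where $X$ and $Y$ are $-(2\pi i)^2\om_{([1])}$ and a multiple of $h_{([1])}$ acting on a graded space with finite-dimensional pieces.
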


\begin{prop}\label{BCHforSL2}
Let $X$ and $Y$ be locally finite even operators on a vector superspace $U$, satisfying
\[
[X, Y] = Y.
\]
On the space of functions on $(\tau, z, u) \in \HH \times \C \times U$, linear in $u$, the formula
\begin{align*}
[\varphi \cdot A](\tau, z | u) = \varphi\left( \frac{a\tau+b}{c\tau+d}, \frac{z}{c\tau+d} \bigg| (c\tau+d)^{X} e^{- \frac{c z}{c\tau+d} Y} u \right), \quad \text{where} \quad A = \left(\begin{array}{cc}a & b \\ c & d \\ \end{array}\right),
\end{align*}
defines a right action of $SL_2(\Z)$.
\end{prop}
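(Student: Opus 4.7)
The plan is to verify the right action axiom $(\varphi\cdot A_1)\cdot A_2 = \varphi\cdot(A_1A_2)$ directly, together with the trivial identity $\varphi \cdot I = \varphi$. Write $A_i = \twobytwo{a_i}{b_i}{c_i}{d_i}$ for $i=1,2$, and $A_1A_2 = \twobytwo{a}{b}{c}{d}$, and set $\tau_2 = (a_2\tau+b_2)/(c_2\tau+d_2)$, $z_2 = z/(c_2\tau+d_2)$, $K = c_2\tau+d_2$, $L = c_1\tau_2+d_1$. The standard Möbius and automorphy-factor identities give $(a_1\tau_2+b_1)/L = (a\tau+b)/(c\tau+d)$ and $LK = c\tau+d$, from which $z_2/L = z/(c\tau+d)$, so the first two slots of $\varphi$ will match automatically on both sides.

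The substantive task is to handle the operator acting in the $u$-slot. Unwinding the definitions, the result of applying $A_2$ then $A_1$ is $\varphi$ evaluated on
\[
L^X \, e^{-\frac{c_1 z_2}{L}Y} \, K^X \, e^{-\frac{c_2 z}{K}Y} u.
\]
I would apply Lemma \ref{BCHforus} with $s=1$, which gives $K^X Y K^{-X} = KY$ and hence $e^{\gamma Y} K^X = K^X e^{\gamma K^{-1}Y}$, to push the middle $K^X$ past the middle exponential. Since $X$ commutes with itself the two $X$-factors then collapse to $L^X K^X = (LK)^X = (c\tau+d)^X$, and the two exponentials in $Y$ combine to $\exp\bigl((\gamma_1+\gamma_2)Y\bigr)$ with
\[
\gamma_1 + \gamma_2 = -\frac{c_1 z}{LK^2} - \frac{c_2 z}{K} = -\frac{z\bigl(c_1 + c_2(c\tau+d)\bigr)}{(c\tau+d)(c_2\tau+d_2)}.
\]

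The step I expect to be the only genuine content is then showing $\gamma_1+\gamma_2 = -cz/(c\tau+d)$, i.e., $c_1 + c_2(c\tau+d) = c(c_2\tau+d_2)$. Substituting $c = c_1a_2 + d_1c_2$ and $d = c_1b_2 + d_1d_2$, the coefficients of $\tau$ will match automatically, and the constant terms collapse (after cancelling the common factor $c_1$, with the case $c_1 = 0$ handled directly) to the single identity $a_2d_2 - b_2c_2 = 1$, i.e., $\det A_2 = 1$. Once this is confirmed the cocycle identity holds, and since $\varphi\cdot I = \varphi$ is immediate, the formula defines a right action of $SL_2(\Z)$. The whole argument is essentially bookkeeping around the conjugation identity extracted from Lemma \ref{BCHforus}; the hypothesis $[X,Y]=Y$ enters exactly once, and the right-action property ends up being equivalent to unimodularity of $A_2$.
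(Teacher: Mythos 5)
Your proposal is correct and follows essentially the same route as the paper: both verify the cocycle identity $(\varphi\cdot A_1)\cdot A_2=\varphi\cdot(A_1A_2)$ by using the conjugation formula of Lemma \ref{BCHforus} (with $s=1$) to push the power of the automorphy factor past the exponential in $Y$, after which the $X$-factors multiply and the $Y$-coefficients combine via the automorphy-factor identities. The only cosmetic difference is that the paper records the needed identity on the $Y$-coefficients as a ``well known'' cocycle relation (\ref{SL2cocycles}), whereas you derive it directly from $\det A_2=1$.
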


%We will use the case $X = L_{[0]}$ and $Y = h_{([1])}$ so that, indeed, $[X, Y] = Y$.

\begin{proof}
Let $A = \twobytwo abcd$ and $B = \twobytwo{a'}{b'}{c'}{d'}$. Write $A\tau = \frac{a\tau+b}{c\tau+b}$, and $\ga_A(\tau) = (c\tau+d)^{-1}$. It is well known that
\begin{align}\label{SL2cocycles}
\begin{split}
\ga_A(B\tau)\ga_B(\tau) &= \ga_{AB}(\tau), \\
\text{and} \quad 
c'\ga_B(\tau) + c \ga_B(\tau)^2 \ga_A(B\tau) &= (ca'+dc')\ga_{AB}(\tau).
\end{split}
\end{align}
Now we calculate
\begin{align}\label{SL2.action.check}
[(\varphi A) B](\tau, z | u)
&= [\varphi A]\left( B\tau, \ga_B(\tau) z \bigg| \ga_B(\tau)^{-X} e^{ c' \ga_B(\tau) z Y} u \right) \nonumber \\
&= \varphi\left( A(B\tau), \ga_A(B\tau) \ga_B(\tau) z \bigg| \ga_A(B\tau)^{-X} e^{c \ga_A(B\tau) \ga_B(\tau) z Y} \ga_B(\tau)^{-X} e^{ c' \ga_B(\tau) z Y} u \right).
\end{align}
The third formula of Lemma \ref{BCHforus}, with
\[
\al = \log{\ga_B(\tau)} \quad \text{and} \quad \beta = c \ga_A(B\tau) \ga_B(\tau) z,
\]
implies
\begin{align*}
e^{c \ga_A(B\tau) \ga_B(\tau) z Y} \ga_B(\tau)^{-X} = \ga_B(\tau)^{-X} e^{c \ga_A(B\tau) \ga_B(\tau)^2 z Y}.
\end{align*}
Substituting this into (\ref{SL2.action.check}) and using (\ref{SL2cocycles}) reduces $[(\varphi A) B](\tau, z | u)$ to
\[
\varphi\left( (AB)\tau, \ga_{AB}(\tau)z | \ga_{AB}(\tau)^{-X} e^{(ca'+dc')\ga_{AB}(\tau) z Y} u \right) = [\varphi(AB)](\tau, z | u).
\]
\end{proof}

The following proposition and the subsequent theorem, are the main results of this section.
{%\color{blue}
\begin{prop}\label{prop.Fmodular}
Let $V$, $\om$, $h$, and $u$ be as in Definition \ref{F.definition}. Let $A = \twobytwo abcd$ and $\ell \in \Z_{>0}$ and set $z = (\sigma/2)[(b+d\ell) + (a+c\ell)\tau]$. Then for all $\tau \in \HH$ and $\sigma \in \Q_{>0}$ sufficiently small,
\begin{align}\label{Fmodular}
F_{M}\left( \frac{a\tau+b}{c\tau+d}, \frac{z}{c\tau+d} \bigg| (c\tau+d)^{-L_{[0]}} \exp{\left[ -\frac{c z}{c\tau+d} {I(h)} \right]} u \right) = \exp\left[ 2\pi i \frac{c z^2}{c\tau+d} \right] \sum_{M'} \rho_{M, M'} F_{M'}(\tau, z | u),
\end{align}
where $\rho_{M, M'}$ is the matrix of \textup{(}\ref{relationbetweenSmatrices}\textup{)}.
\end{prop}

}

\begin{proof}
We start by proving the formula 
\begin{align}\label{quasi-part}
\begin{split}
& (c\tau+d)^{-L_{[0]}(\sigma)} \D({-(a + c\ell - 1)\frac{\sigma}{2}h}, 1) u \\
& \phantom{------} = (c\tau+d)^{-L_{[0]}} \exp{ \left[ -\frac{c}{c\tau+d} \cdot \frac{\sigma}{2} [(b+d\ell) + (a+c\ell)\tau] { I(h)} \right]} u.
\end{split}
\end{align}
Note that
\begin{align*}
[(2\pi i)^2\om_{([1])}, (2\pi i)^2h_{([1])}] = -(2\pi i)^2 h_{([1])},
\end{align*}
and that
\begin{align*}
\begin{split}
L_{[0]}(\sigma)
= (2\pi i)^2 \left[ \om_{([1])} - \frac{\sigma}{2} (Th)_{([1])} \right]
= h_0 + (2\pi i)^2 \left[ \om_{([1])} + \frac{\sigma}{2} h_{([1])} \right].
\end{split}
\end{align*}
We plug $X = -(2\pi i)^2 \om_{([1])}$ and $Y = \frac{\sigma}{2} I(h) = \frac{\sigma}{2} (2\pi i)^2 h_{([1])}$, along with the parameters
\[
s = 1, \quad \al = \log(c\tau+d), \quad \text{and} \quad \beta = \frac{1}{c\tau+d} - 1,
\]
into Lemma \ref{BCHforus}, to obtain
\begin{align*}
(c\tau+d)^{-L_{[0]}(\sigma)}
&= (c\tau+d)^{- h_0 - (2\pi i)^2 \om_{([1])} - \frac{\sigma}{2} (2\pi i)^2 h_{([1])}} \\
&= (c\tau+d)^{ - h_0 -(2\pi i)^2 \om_{([1])} } \exp \left[ \left( \frac{1}{c\tau+d} - 1 \right) \frac{\sigma}{2} (2\pi i)^2 h_{([1])} \right].
\end{align*}
This, together with (\ref{compareZhuLi}), yields
\begin{align}\label{quasi-part.2}
\begin{split}
& (c\tau+d)^{-L_{[0]}(\sigma)} \D\left({-(a + c\ell - 1)\frac{\sigma}{2}h}, 1\right) u \\
& \phantom{------} =
(c\tau+d)^{-L_{[0]}} \exp \left[ \left( \frac{1}{c\tau+d} - 1 \right) \frac{\sigma}{2} (2\pi i)^2 h_{([1])} - \frac{\sigma}{2} (a+c\ell-1) (2\pi i)^2 h_{([1])} \right].
\end{split}
\end{align}
The term in square brackets here is $\frac{\sigma}{2} (2\pi i)^2 h_{([1])}$ times
\begin{align*}
\begin{split}
\frac{1}{c\tau+d} - (a + c\ell)
&= \frac{1 - (a+c\ell)(c\tau+d)}{c\tau+d} \\
&= \frac{1 - c(a+c\ell)\tau - ad - cd\ell}{c\tau+d} \\
&= -\frac{c(a+c\ell)\tau + bc + cd\ell}{c\tau+d}
= -\frac{c}{c\tau+d} \left[ (a+c\ell)\tau + b + d\ell \right].
\end{split}
\end{align*}
Substituting this into (\ref{quasi-part.2}) yields (\ref{quasi-part}).

Now we substitute $\D({-(a + c\ell - 1)\frac{\sigma}{2}h}, 1)u$ in place of $u$ in Proposition \ref{Gmodular} to obtain
\begin{align*}
G_M\left( \frac{a\tau+b}{c\tau+d}, \ell, \sigma \bigg| (c\tau+d)^{-L_{[0]}(\sigma)} \D({-(a + c\ell - 1)\frac{\sigma}{2}h}, 1) u \right)
= \sum_{M'} \ov{\rho}_{M, M'} G_{M'}\left( \tau, \frac{b+d\ell}{a+c\ell}, (a+c\ell)\sigma \bigg| u \right).
\end{align*}
Substituting (\ref{quasi-part}) transforms this into
\begin{align*}
& G_M\left( \frac{a\tau+b}{c\tau+d}, \ell, \sigma \bigg| (c\tau+d)^{-L_{[0]}} \exp{ \left[ -\frac{c}{c\tau+d} \cdot \frac{\sigma}{2} [(b+d\ell) + (a+c\ell)\tau] {(2\pi i)^2 h_{([1])}} \right]} u \right) \\
& \phantom{------} = \sum_{M'} \ov{\rho}_{M, M'} G_{M'}\left( \tau, \frac{b+d\ell}{a+c\ell}, (a+c\ell)\sigma \bigg| u \right).
\end{align*}

We make the substitution (\ref{sigma.to.z}) again, using (\ref{subst.consequences}) and (\ref{relationbetweenSmatrices}). Thus we obtain (\ref{Fmodular}).
\end{proof}

For the applications we wish to pursue, the most useful consequence of Proposition \ref{prop.Fmodular} is the following theorem.
\begin{thm}\label{theorem1.body}
Let $V$, $\om$, $h$, and $u$ be as in Definition \ref{F.definition}. Let $\mathcal{X}$ denote the set of $h$-stable irreducible positive energy $V$-modules, and let
\[
F_M(\tau, z | u) = \str_M u_0 e^{2\pi i z (h_0 - p/2)} q^{L_0 - c/24}
\]
be the supertrace function associated with $M \in \mathcal{X}$. Then there exists $\varepsilon > 0$ such that for each $M \in \mathcal{X}$ and all $u \in V$ the supertrace function $F_M(\tau, z | u)$ converges absolutely uniformly on compact subsets of the domain
\[
\{(\tau, z) \in \HH \times \C | 0 < \Imm(z) < \varepsilon \Imm(\tau)\}.
\]
Now we suppose \textup{(}1\textup{)} that the set of functions $F_M(\tau, z | \vac)$ as $M$ runs over $\mathcal{X}$ is modular invariant, i.e., that there exists a representation $\upsilon$ of $SL_2(\Z)$ for which (\ref{Fmodular.repeat}) holds for $u = \vac$, and \textup{(}2\textup{)} that for each $\alpha \in (0, \varepsilon) \subset \R$ and $\beta \in \R$ the set of functions $F_M(\tau, \alpha\tau + \beta | \vac)$, as $M$ runs over $\mathcal{X}$, is linearly independent. Then for all $u \in V$ the relation
\begin{align}\label{Fmodular.repeat}
F_{M}\left( \frac{a\tau+b}{c\tau+d}, \frac{z}{c\tau+d} \bigg| (c\tau+d)^{-L_{[0]}} \exp{\left[ -\frac{c z}{c\tau+d} {I(h)} \right]} u \right) = \exp\left[ 2\pi i \frac{c z^2}{c\tau+d} \right] \sum_{M'} \upsilon_{M, M'} F_{M'}(\tau, z | u)
\end{align}
is satisfied in the intersection of the domains of convergence of the two sides.
\end{thm}

\begin{proof}
Since $\mathcal{X}$ is finite the statement on convergence follows directly from Proposition \ref{prop:convergence.of.F}, taking $\varepsilon$ to be the minimum of the values that appear there. Next by combining equation (\ref{Fmodular}) at $u = \vac$ with relation (\ref{Fmodular.repeat}), we obtain
\begin{align}\label{equality.lin.comb}
\sum_{M'} \left( {\rho}_{M, M'} - {\upsilon}_{M, M'} \right) F_{M'}\left( \tau, \frac{\sigma}{2} \left[ (b+d\ell) + (a+c\ell)\tau \right] \right) = 0,
\end{align}
where $\rho_{M, M'}$ is given by (\ref{relationbetweenSmatrices}). Importantly $\rho_{M, M'} = {\rho}_{M, M'}(A, \ell, \sigma)$ is independent of $\tau$. By hypothesis, i.e., by condition (2), the functions that appear in (\ref{equality.lin.comb}) are linearly independent for any sufficiently small $\sigma$. Therefore ${\rho}_{M, M'} = {\upsilon}_{M, M'}$. Now Proposition \ref{prop.Fmodular} implies that for each fixed $\tau \in \HH$ the relation (\ref{Fmodular.repeat}) holds at $z = \frac{\sigma}{2} \left[ (b+d\ell) + (a+c\ell)\tau \right]$. By allowing $\sigma$ to vary over a set of the form $\Q \cap (0, \varepsilon_1)$ we obtain the identity (\ref{Fmodular.repeat}) on a set of values that has accumulation points within the intersection of the domains of convergence of the two sides of (\ref{Fmodular.repeat}). The desired conclusion now follows by the identity theorem for holomorphic functions.
\end{proof}

Finally we record the following corollary in the rational and $C_2$-cofinite case. It is related to a result independently obtained in \cite{Krauel.C2.case}.
\begin{cor}\label{C2.case}
Let $(V, \om)$ be a rational and $C_2$-cofinite conformal vertex algebra graded by integer conformal weights. Let $h \in V_1$ satisfy the OPE relations \textup{(}\ref{main.lambda.relation}\textup{)}, the grading condition stated thereafter, and the linear independence condition \textup{(}2\textup{)} of Theorem \ref{theorem1.body}. Then the supertrace functions $F_M(\tau, z | u)$ on the irreducible positive energy $V$-modules satisfy the relation \textup{(}\ref{Fmodular}\textup{)} where $\rho$ is some linear representation of the group $SL_2(\Z)$.
\end{cor}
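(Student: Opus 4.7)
The plan is to verify the hypotheses of Proposition \ref{prop.Fmodular} in the rational $C_2$-cofinite setting, and then promote the matrix $\rho$ it produces to a representation of $SL_2(\Z)$ by invoking linear independence of supertrace functions.

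First I would check the two hypotheses. Cofiniteness of $V$ relative to $h$ follows immediately from $C_2$-cofiniteness of $V$, as was observed just after Definition \ref{rel.cofinite}. For rationality relative to $h$, I argue that rationality together with $C_2$-cofiniteness imply that $V$ has only finitely many irreducible positive energy modules, each with finite-dimensional $L_0$-eigenspaces, on each of which $h_0$ acts semisimply with finitely many eigenvalues. Hence for $\sigma \in \Q_{>0}$ sufficiently small the operator $L_0(\sigma) = L_0 + \tfrac{\sigma}{2} h_0$ still has spectrum bounded below on each irreducible module; thus every irreducible positive-energy $(V, \om)$-module is $h$-stable, and the category of positive energy $(V, \om(\sigma))$-modules embeds as a full subcategory of the semisimple category of positive energy $(V, \om)$-modules, and is therefore itself semisimple.

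With the hypotheses verified, Proposition \ref{prop.Fmodular} furnishes, for each $A \in SL_2(\Z)$, a matrix $(\rho_{M, M'}(A))$ such that (\ref{Fmodular}) holds for all $u \in V$. To conclude that $A \mapsto \rho(A)$ is a homomorphism, I would invoke Proposition \ref{BCHforSL2}, which guarantees that the transformation law on the left-hand side of (\ref{Fmodular}) defines a right action of $SL_2(\Z)$. Equating the two expressions for $F_M \cdot (AB)$ obtained respectively from associativity of this action and from iterating (\ref{Fmodular}) yields
\[
\sum_{M''} \rho_{M, M''}(AB) \, F_{M''}(\tau, z \mid u) = \sum_{M', M''} \rho_{M, M'}(A) \, \rho_{M', M''}(B) \, F_{M''}(\tau, z \mid u).
\]
Standard linear independence of the supertrace functions $F_{M''}(\tau, z \mid u)$, jointly in $\tau$, $z$, and $u$ — a classical consequence of Zhu's modularity theorem in the rational $C_2$-cofinite setting, since even the specialisations $S_M(\tau \mid u) = F_M(\tau, 0 \mid u)$ separate irreducibles — then forces $\rho(AB) = \rho(A)\rho(B)$.

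The main technical point is the justification of rationality relative to $h$: one must ensure that a single value of $\sigma$ can be chosen small enough to work uniformly across the finite list of irreducible modules. Finiteness of that list (from rationality and $C_2$-cofiniteness) together with finite-dimensionality of each conformal weight space makes this manageable, and the remainder of the argument is essentially formal, reducing the corollary to a direct corollary of Propositions \ref{prop.Fmodular} and \ref{BCHforSL2}.
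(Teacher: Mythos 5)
Your route is genuinely different from the paper's. The paper's proof is very short: it cites Miyamoto's theorem for the modular invariance of the restricted functions $F_M(\tau, z \mid \vac)$ (with $\rho$ already known to be a representation) and then invokes Theorem \ref{theorem1.body} to pass to arbitrary $u$. You instead go directly through Proposition \ref{prop.Fmodular} and recover the homomorphism property of $\rho$ from the right-action structure of Proposition \ref{BCHforSL2} combined with linear independence of the full supertrace functions. This buys something real: Theorem \ref{theorem1.body} requires linear independence of the \emph{restrictions} $F_M(\tau, z \mid \vac)$, which is not automatic (distinct irreducibles can share a refined character), whereas your argument only needs linear independence of the $F_M(\tau, z \mid u)$ jointly in $u$, which follows from the standard Zhu-algebra separation argument you cite. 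Your proof is also self-contained within the paper's machinery, at the price of having to verify the hypotheses of Proposition \ref{prop.Fmodular} yourself. One caveat: to iterate \textup{(}\ref{Fmodular}\textup{)} you need the matrix $\rho(A)$ to be well defined independently of the auxiliary parameters $\ell, \sigma$ entering \textup{(}\ref{relationbetweenSmatrices}\textup{)}; this again follows from the same linear independence, but you should say so.

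There is one step whose justification does not hold up as written: the verification of rationality relative to $h$. From ``$h_0$ acts on each finite-dimensional $L_0$-eigenspace with finitely many eigenvalues'' you cannot conclude that $L_0(\sigma) = L_0 + \tfrac{\sigma}{2} h_0$ is bounded below for small $\sigma$ -- if the $h_0$-eigenvalues on $M_n$ were allowed to grow superlinearly in $n$ (say like $-n^2$), the spectrum of $L_0(\sigma)$ would be unbounded below for every $\sigma > 0$. What is actually needed is a growth estimate: the $h_0$-spectrum on $M_n$ is $O(n)$ (indeed $O(\sqrt{n})$), which in the rational $C_2$-cofinite setting follows from finite generation of $M$, or from the decomposition of $M$ over the rank-one Heisenberg subalgebra generated by $h$, but this requires an argument you have not supplied. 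The same issue affects your claim that every positive energy $(V, \om(\sigma))$-module is a positive energy $(V, \om)$-module: one needs the reverse bound together with semisimplicity of $h_0$ on modules. To be fair, the paper's own proof is equally silent on why rational plus $C_2$-cofinite implies rational relative to $h$ (Theorem \ref{theorem1.body} assumes it as a hypothesis), but the specific inference you offer is a non sequitur and should be replaced by the growth estimate.
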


\begin{proof}
The modular invariance of the restricted trace functions $F_M(\tau, z | \vac)$ has been established by Miyamoto \cite{Miyamoto.theta} for even vertex algebras. The extension to vertex algebras with nontrivial odd part is straightforward, the important condition to maintain is that the conformal weights be integers. The modular invariance of the $F_M$ at arbitrary $u \in V$ now follows from Theorem \ref{theorem1.body}.
\end{proof}

\section{Admissible Affine Vertex Algebras}\label{adm.aff.case}

Let $\ov{\g}$ be a simple Lie algebra, and let $k$ be an admissible number for $\ov\g$. We denote by
\begin{align*}
C^{\circ}_- = \{h \in \ov\h_\Q | \text{$\al(h) < 0$ for all $\al \in \ov\Pi$}\}.
\end{align*}
the negative open fundamental chamber of $\ov\g$.
\begin{lemma}\label{adm.rel.rational}
Let $h \in C^\circ_-$, then $V_k(\ov\g)$ is rational relative to $h$.
\end{lemma}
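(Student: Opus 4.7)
The plan is to reduce the claim to Arakawa's theorem~\ref{rational.category.O.theorem}, which states that $V_k(\ov\g)$ is rational in category $\OO_k$. First I would verify that $\om(\sigma) = \om - \tfrac{\sigma}{2}Th$ equips $V_k(\ov\g)$ with a conformal structure for sufficiently small $\sigma \in \Q_{>0}$: a weight vector of $\ov\h$-weight $\ov\mu = \sum k_i \ov\al_i$ in $V_k(\ov\g)$ has $L_0$-eigenvalue at least $\sum_i |k_i|$, while $(\sigma/2)h_0$ contributes $(\sigma/2)\ov\mu(h)$, and since $\ov\al_i(h) < 0$ by the hypothesis $h \in C^\circ_-$, a direct estimate shows $L_0(\sigma) = L_0 + (\sigma/2)h_0$ has nonnegative rational spectrum bounded below as soon as $\sigma < 2/\max_i |\ov\al_i(h)|$.

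Next I would check that for every $\la \in \prin^k \cup \coprin^k$ the irreducible $L(\la)$ is an $h$-stable positive energy module. Writing a weight as $\mu = \la - \sum_{i=0}^\ell n_i \al_i$ with $n_i \geq 0$, one computes
\[
L_0(\sigma) = h_\la + \tfrac{\sigma}{2}\ov\la(h) + n_0\bigl(1 + \tfrac{\sigma}{2}\ov\theta(h)\bigr) - \tfrac{\sigma}{2}\sum_{i=1}^\ell n_i \ov\al_i(h),
\]
and every coefficient of $n_0, \ldots, n_\ell$ is strictly positive for $\sigma$ small and $h \in C^\circ_-$. Hence $L_0(\sigma)$ acts with discrete bounded-below spectrum on $L(\la)$ with finite-dimensional eigenspaces.

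The main step is then to show conversely that every positive energy $(V_k(\ov\g), \om(\sigma))$-module $M$ lies in category $\OO_k$ as a $\g$-module. Once this is established, Theorem~\ref{rational.category.O.theorem} decomposes $M$ into admissible irreducibles $L(\la)$, each of which is positive energy for $\om(\sigma)$ by the previous step, so that the full category of positive energy $(V_k(\ov\g), \om(\sigma))$-modules is semisimple and rationality of $(V_k(\ov\g), \om(\sigma))$ follows.

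The hard part will be this last step. The hypothesis that $L_0(\sigma)$ be bounded below is weaker than ordinary positivity of $L_0$, because $L_0 = L_0(\sigma) - (\sigma/2)h_0$ may become arbitrarily negative in $\ov\h$-weight directions where $\ov\mu(h) \to +\infty$. Promoting $L_0(\sigma)$-boundedness to the full $\OO_k$-cone condition on $M$ (in particular bounding $-\langle \mu, d\rangle$ from above) will require exploiting the strict interiority of $h$ in $C^\circ_-$ together with the classification of $V_k(\ov\g)$-modules from \cite{A12catO}; I expect the associated variety of $V_k(\ov\g)$, a nilpotent orbit closure at admissible level, to provide the needed finiteness constraint on the support of~$M$.
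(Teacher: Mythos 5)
Your strategy is the same as the paper's: reduce everything to Theorem \ref{rational.category.O.theorem} by identifying positive energy $(V_k(\ov\g), \om(\sigma))$-modules with objects of $\OO_k$. Your first two steps are sound in substance, although the inequality you invoke in the first is false as stated: a vector of $\ov\h$-weight $\ov\mu = \sum_i k_i\ov\al_i$ in $V_k(\ov\g)$ need not have conformal weight at least $\sum_i |k_i|$ --- the vector $(e_{\ov\theta})_{(-1)}\vac$ has conformal weight $1$ while $\sum_i k_i = \height(\ov\theta)$ can be large. What is true, and what suffices, is that a vector of conformal weight $\D$ is a sum of monomials $a^1_{(-n_1)}\cdots a^r_{(-n_r)}\vac$ with $r \le \sum_j n_j = \D$, so that $|\ov\mu(h)| \le \D \max_{\al \in \ov\D}|\al(h)|$ and hence $L_0(\sigma) \ge \D\bigl(1 - \tfrac{\sigma}{2}\max_{\al}|\al(h)|\bigr) \ge 0$ for $\sigma$ small. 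Your verification that the admissible $L(\la)$ remain positive energy for $\om(\sigma)$ is correct.

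The genuine gap is the step you yourself flag as hard --- that every positive energy $(V_k(\ov\g), \om(\sigma))$-module lies in $\OO_k$ --- and the tool you propose for it is the wrong one. The associated variety controls \emph{cofiniteness} (it is exactly what is used in Lemma \ref{adm.rel.cofinite}); it says nothing about the support of a positive energy module. The correct mechanism is elementary and uses precisely the strict interiority of $h$ in $C^\circ_-$ that you mention: for $a \in \ov\g_\al$ one has $[L_0(\sigma), a_{(n)}] = \bigl(-n + \tfrac{\sigma}{2}\al(h)\bigr)a_{(n)}$, and since $\al(h) < 0$ for all $\al \in \ov\D_+$, every mode $a_{(0)}$ with $a \in \ov\n_+$ and every mode $a_{(n)}$ with $a \in \ov\g$, $n \ge 1$, strictly \emph{lowers} the $L_0(\sigma)$-eigenvalue, by at least $\min\bigl(1 - \tfrac{\sigma}{2}\max_\al|\al(h)|,\ \tfrac{\sigma}{2}\min_{\al\in\ov\D_+}|\al(h)|\bigr) > 0$ once $\sigma$ is small. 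On a module whose $L_0(\sigma)$-spectrum is bounded below with finite dimensional eigenspaces, the subalgebra $\ov\n_+ + t\ov\g[t]$ therefore acts locally nilpotently, every vector generates a highest-weight-type submodule, and the module lies in $\OO_k$; your scenario of $L_0 \to -\infty$ compensated by $\ov\mu(h) \to +\infty$ is excluded because reaching such weights from a given graded piece would require infinitely many applications of degree-lowering modes. With this identification in hand, Theorem \ref{rational.category.O.theorem} finishes the proof exactly as you intend --- and as the paper does, whose own proof asserts this identification in one sentence without supplying the local nilpotency argument.
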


\begin{proof}
The condition $h \in C^\circ_-$ guarantees that $\om(\sigma)$ is a conformal vector for sufficiently small $\sigma \in \Q_{>0}$. The shift of conformal structure from $\om$ to $\om(\sigma)$ has the effect of destroying the positive energy condition for those $(V_k(\ov\g), \om)$-modules outside the category $\OO_k$. Indeed the positive energy irreducible $(V_k(\ov\g), \om(\sigma))$-modules are precisely those irreducible $V_k(\ov\g)$-modules which lie in $\OO_k$. Thus rationality of $V_k(\ov\g)$ relative to $h$ is equivalent to rationality of $V_k(\ov\g)$ in the category $\OO_k$, which is the main theorem of \cite{A12catO} (cf. Theorem \ref{rational.category.O.theorem} above).
\end{proof}

Recall that $V_k(\ov\g)$ is graded by the root lattice $\ov Q$.
\begin{lemma}\label{adm.rel.cofinite}
Let $h \in \ov{\h}_\Q$ and $\sigma \in \Q$ be such that $\frac{\sigma}{2} \al(h) \notin \Z$ for all roots $\al$ of $\ov{\g}$, and that the automorphism
\[
g(\sigma) = \exp\left(-2\pi i \tfrac{\sigma}{2} h_0\right)
\]
has prime order. Let $W$ be the $\ov Q$-graded complement in $V = V_k(\ov\g)$ to the fixed point subalgebra $V^{g(\sigma)}$. Then $V$ is cofinite relative to the splitting $V = V^{g} \oplus W$ for each nontrivial element $g \in \left<g(\sigma)\right>$.
\end{lemma}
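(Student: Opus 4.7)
The prime-order hypothesis makes the claim uniform in $g$. Let $p$ be the order of $g(\sigma)$, and let $g = g(\sigma)^j$ for some $1 \le j \le p-1$. On a weight vector $u \in V$ with $h_0 u = \alpha(h) u$, the element $g(\sigma)$ acts by the $p$-th root of unity $e^{-2\pi i \frac{\sigma}{2}\alpha(h)}$; writing $\frac{\sigma}{2}\alpha(h) \equiv m/p \pmod{\Z}$ and using $\gcd(j,p) = 1$, one sees that $g(\sigma)^j u = u$ if and only if $g(\sigma) u = u$, so $V^g = V^{g(\sigma)}$. It thus suffices to establish relative cofiniteness for the single splitting $V = V^{g(\sigma)} \oplus W$.

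Rewriting the definition gives $R^{\text{rel}}(V) = R(V)/(R(V) \cdot \ov{V^+})$, where $R(V) = V/V_{(-2)}V$ is Zhu's $C_2$-algebra and $\ov{V^+}$ denotes the image of $V^+$ in $R(V)$ under the induced commutative product $\bar a \cdot \bar b = \overline{a_{(-1)}b}$. The key external input is the theorem of Arakawa \cite{A.assoc.var} on the associated variety of admissible affine vertex algebras: for admissible $k$, $\spec R(V_k(\ov\g))$ is the closure $\ov{\OO}_k \subseteq \CN$ of a nilpotent orbit in $\ov\g^*$. In particular $R(V)$ is a finitely generated commutative $\C$-algebra, and the surjection $S(\ov\g) = R(V^k(\ov\g)) \twoheadrightarrow R(V)$ realises $R(V) = \C[\ov{\OO}_k]$ as a quotient of the polynomial algebra on $\ov\g^*$.

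Under the embedding $\ov\g \hookrightarrow V$, $x \mapsto x_{(-1)}\vac$, every root vector $e_\alpha$ ($\alpha \in \ov{\D}$) is an $h_0$-eigenvector with eigenvalue $\alpha(h)$, and by assumption $\frac{\sigma}{2}\alpha(h) \notin \Z$, whence $e_\alpha \in V^+$. The ideal in $R(V) = \C[\ov{\OO}_k]$ generated by $\ov{V^+}$ therefore contains the images $\bar e_\alpha$ of all the root coordinate functions on $\ov\g^*$. Its zero locus in $\ov{\OO}_k \subseteq \ov\g^*$ consists of points $x$ with $\langle x, e_\alpha\rangle = 0$ for every root $\alpha$, i.e., points in $\ov{\OO}_k \cap \ov\h^*$. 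Identifying $\ov\g^* \cong \ov\g$ via the invariant form, this is $\ov{\OO}_k \cap \ov\h \subseteq \CN \cap \ov\h = \{0\}$. Hence the ideal cuts out a single point in $\spec R(V)$, and since $R(V)$ is a finitely generated $\C$-algebra, the Nullstellensatz implies that $R^{\text{rel}}(V)$ is a finitely generated Artinian $\C$-algebra, hence finite dimensional.

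The one substantive ingredient is Arakawa's identification of $\spec R(V_k(\ov\g))$ with a nilpotent orbit closure, which is where admissibility of $k$ enters essentially. Given that input, the remaining arguments reduce to elementary commutative algebra.
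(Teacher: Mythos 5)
Your proposal is correct and follows essentially the same route as the paper: reduce to $g = g(\sigma)$ via the prime-order hypothesis, observe that the hypothesis on $\frac{\sigma}{2}\al(h)$ forces every root space into $V^+$, and then invoke Arakawa's theorem on the associated variety together with $\CN \cap \ov\h = \{0\}$ to conclude that $\spec R^{\text{rel}}(V)$ is a point and hence $R^{\text{rel}}(V)$ is finite dimensional. The paper packages the last step as a commutative square of surjections $S[\ov\g^*] \twoheadrightarrow R(V)$ and $U(\ov\h) \twoheadrightarrow R^{\text{rel}}(V)$ rather than via the ideal generated by $\ov{V^+}$ and the Nullstellensatz, but this is only a difference of presentation.
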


\begin{proof}
By the condition that $g(\sigma)$ be of prime order, we have $V^{g}=V^{g(\sigma)}$ and so it suffices to verify the claim on $g = g(\sigma)$ itself. Let $C_2(V) = V_{(-2)}V$ and $C^\text{rel}(V) = V^g_{(-2)}V^g + V_{(-1)}W$. Since $V$ is strongly generated by $\ov\g = V_1$, we have a surjection of {commutative} algebras $S[{\ov\g}^*] \twoheadrightarrow V / C_2(V)$. The conditions imposed on $\sigma$ and $h$ ensure that each root space $\ov\g_\al$ lies outside $V^g$. It follows from a standard PBW argument that the quotient of $V$ by $C^\text{rel}(V)$ is naturally a quotient of $U(\ov\h)$. Thus we have a commutative diagram of surjections of {commutative} algebras
\begin{align*}
\xymatrix{
S[\ov\g^*] \ar@{->>}[r] \ar@{->>}[d] & R(V) \ar@{->>}[d] \\
U(\ov\h) \ar@{->>}[r] & R^\text{rel}(V). \\
}
\end{align*}

We recall the \emph{associated variety} of $V$, which is the affine scheme $X_V = \spec{R(V)}$ \cite{Ara12}. We also put $X^\text{rel}_V = \spec{R^\text{rel}(V)}$ the \emph{relative associated variety}. It was shown in {\cite[Theorem 5.3.1]{A.assoc.var}} that if $V = V_k(\ov\g)$ where $k \in \Q$ is an admissible number for $\ov\g$ then $X_V \subset \mathcal{N}$ where $\mathcal{N} \subset \ov\g$ is the nilpotent cone.

Our diagram above implies that $X^\text{rel}_V \subset X_V \cap \ov\h \subset \mathcal{N} \cap \ov\h = \{0\}$. It follows that $\dim_\C{R^\text{rel}(V)} < \infty$, and $V = V_k(\ov\g)$ is relatively cofinite as required.
\end{proof}

Recall the set $Y = \HH \times \ov\h$ and its subset $Y^+$ defined in (\ref{Yplus.def}), as well as the hyperplanes $H_{\al, \om}$. We now introduce the trace function
\[
\Psi_M(\tau, x | u) = \tr_{M} u_0 e^{2\pi i x_0} q^{L_0 - c_k/24}
\]
of $(\tau, x) \in Y$ and $u \in V^k(\ov\g)$ on the highest weight $\g$-module $M$. We also denote $\Psi_\la = \Psi_{L(\la)}$ for $\la$ admissible. The $\Psi_M$ specialise at $u=\vac$ to the Kac-Wakimoto characters $\chi_M$ of (\ref{KWchar.gen.def}). We have seen in Section \ref{main.computation} that, on general grounds, $\Psi_\la(\tau, zh | u)$ converges on sets of the form $0 < \textup{Im}({z}) < \varepsilon \textup{Im}({\tau})$. The following lemma gives more precise information on convergence, but is not necessary for what follows and therefore can be skipped.
\begin{lemma}\label{tr.fct.convergence}
Let $M$ be a highest weight $\g$-module, which we regard as a $V^k(\ov\g)$-module. For any $u \in V^k(\ov{\g})$, the series defining $\Psi_M(\tau, x | u)$ converges absolutely to a holomorphic function on $Y^+$, and extends to a meromorphic function on $Y$ with possible poles on the hyperplanes $H_{\al, \om}$ for $\al \in \ov{\D}_+^\vee$, $\om \in \Z + \Z \tau$.
\end{lemma}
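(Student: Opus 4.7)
The plan is to prove convergence on $Y^+$ and meromorphic extension to $Y$ together by induction on the conformal weight of $u$, by means of a Zhu-type trace recursion that expresses $\Psi_M(\tau, x | u)$ as a finite linear combination of $\Psi_M(\tau, x | v)$ for vectors $v \in V^k(\ov\g)$ of strictly smaller conformal weight, with coefficient functions manifestly holomorphic on $Y^+$ and meromorphic on $Y$ with pole set contained in $\bigcup_{\al, \om} H_{\al, \om}$.

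The base case $u = \vac$ reduces to the character $\chi_M$. For a Verma module $M = M(\la)$ the explicit product formula given earlier in this section displays both the absolute convergence on $Y^+$ and the meromorphic extension to $Y$ with the claimed pole structure. For a general highest weight module $M$ of highest weight $\la$, the inequality $\dim M_\mu \leq \dim M(\la)_\mu$ together with the sign conditions defining $Y^+$ yields absolute convergence of $\chi_M$ on $Y^+$ by term-by-term domination against $\chi_{M(\la)}$. Meromorphic extension of $\chi_M$ to $Y$ then follows from the standard fact that every irreducible object $L(\mu) \in \OO_k$ has a character expressible as an alternating integer combination of Verma characters (a Kazhdan--Lusztig type formula), each of which is meromorphic on $Y$ with poles on the stated hyperplanes.

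For the inductive step I would use strong generation of $V^k(\ov\g)$ by $\ov\g = V_1^k(\ov\g)$ to reduce to vectors of the form $u = a_{(-m-1)}v$ with $a \in \ov\g_{\ov\beta}$, $m \geq 0$, and $v \in V^k(\ov\g)$ of conformal weight $\D(u) - m - 1$. Expanding $u_0 = (a_{(-m-1)}v)_0$ via the Borcherds identity (\ref{commutator.formula}) as a sum of products of modes of $a$ and modes of $v$, and then using cyclicity of the trace together with the commutation relations
\[
q^{L_0} a_{(k)} = q^{-k} a_{(k)} q^{L_0}, \qquad e^{2\pi i x_0} a_{(k)} = e^{2\pi i \ov\beta(x)} a_{(k)} e^{2\pi i x_0},
\]
one obtains identities that resum to a recursion of the form
\[
\Psi_M(\tau, x | u) = \sum_j f_j(\tau, x) \, \Psi_M(\tau, x | v_j'),
\]
where $v_j' \in V^k(\ov\g)$ has conformal weight strictly less than $\D(u)$ and each $f_j$ is a finite sum of rational expressions of the form $(1 - q^r e^{2\pi i \ov\beta(x)})^{-1}$ for appropriate integers $r$. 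Such coefficient functions are holomorphic on $Y^+$ and meromorphic on $Y$ with pole set in $\bigcup_{\al, \om} H_{\al, \om}$, so the induction closes.

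The main technical obstacle is justifying the convergence of the (a priori infinite) sums arising from Borcherds identity inside the trace and their resummation into the finite recursion above; this is the analogue in the presence of the $x$-variable of the trace identities underlying Zhu's theorem (\cite{Z96}, Section 4). Once the recursion is in place, the induction on conformal weight together with the base case gives the lemma.
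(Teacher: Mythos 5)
Your strategy is the same as the paper's: induct, reduce $u_0=(a_{(n)}b)_0$ via Borcherds identity to products of modes of $a$ and $b$, and use cyclicity of the trace together with local nilpotence of the positive modes to resum into coefficients with denominators $1-q^r e^{2\pi i \ov\beta(x)}$, which are holomorphic on $Y^+$ and meromorphic on $Y$ with poles on the $H_{\al,\om}$. (Your induction is on conformal weight alone rather than the paper's double induction on a generation filtration and conformal weight, but that is only bookkeeping; and your more detailed base case is fine, though the paper simply quotes the known analytic properties of $\chi_M$.) However, two points you leave open are precisely where the work lies, and one of them is a step that would fail as stated.

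First, the coefficients $f_j$ are \emph{not} finite sums of expressions $(1-q^re^{2\pi i\ov\beta(x)})^{-1}$: since $n<0$, the Borcherds expansion $\sum_{j\ge 0}(-1)^j\binom{n}{j}\bigl[a_{n-j}b_{-n+j}-(-1)^nb_{-j}a_j\bigr]$ is an infinite series, and after the cyclicity trick each coefficient is an infinite sum of the form $\sum_{j\ge 0}(-1)^j\binom{n}{j}\binom{-n+j}{i}\,\frac{q^{-n+j}e^{2\pi i\ov\beta(x)}}{1-q^{-n+j}e^{2\pi i\ov\beta(x)}}$ (and its companion). The content of the paper's proof is exactly the estimate you defer: the denominators are uniformly bounded away from zero on compact subsets of $Y^+$, and the ratio test gives radius of convergence $1$ in $q$, so these series define holomorphic functions on $Y^+$; without this the recursion is only formal. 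Second, your resummation breaks down when $a\in\ov\h$, i.e.\ $\ov\beta=0$: the $j=0$ term produces the factor $(1-q^0)^{-1}$, so the geometric-series argument cannot be applied to the zero modes $a_0b_0$. The paper handles this case separately via the identity $\tr_M b_0a_0e^{2\pi i x_0}q^{L_0}=\frac{d}{dt}\tr_M b_0e^{2\pi i(x+ta)_0}q^{L_0}\big|_{t=0}$, which you would need to add. Finally, note that (\ref{commutator.formula}) alone does not expand $(a_{(n)}b)_0$ for $n<0$; you need the full Borcherds identity in the form (\ref{implication.borcherds}).
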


\begin{proof}
There is a grading $V = V^k(\ov\g) = \bigoplus_{\al \in \ov Q} V_\al$ by the root lattice $\ov Q$, extending that on $\ov\g$. For any $u \in V_\al$ we have the commutation relations $[x_0, u_{m}] = \al(x) u_m$ and $[L_0, u_m] = -m u_m$. Hence% by Lemma \ref{BCHforus}
\begin{align}\label{comm.Val}
e^{2\pi i x_0} u_m = e^{2\pi i \al(x)} u_m e^{2\pi i x_0} \quad \text{and} \quad q^{L_0} u_m = q^{-m} u_m q^{L_0}.
\end{align}

Consider the increasing and exhaustive filtration $L^\bullet V$ defined by
\[
L^0V = \C \vac, \quad L^p V = L^{p-1}V + \sum_{n \leq -1} \ov\g_{(n)}L^{p-1}V.
\]
We show the sum defining $\Psi_M(\tau, x | u)$ converges on $Y^+$ for $u \in L^p V$ by induction on $p$. The base case of $p=0$ is the convergence of $\chi_M$.

Suppose the claim is proved for $L^{p-1}V$, and for all elements of $L^p V$ of conformal weight less than $\D$. Let $u \in L^p V$ be of conformal weight $\D$. Either $u \in L^{p-1}V$ or $u = a_{(n)}b$ for some $n \leq -1$, $a \in \ov\g$, and $b \in L^{p-1}V$. By the PBW theorem we may assume without loss of generality that $a \in \ov\h$ or $a \in \ov\g_\al$ for some $\al \in \ov\D_+$. We write $\ov\g_0 = \ov\h$ for convenience.

Borcherds identity implies
\begin{align}\label{implication.borcherds}
\sum_{i \in \Z_+} \binom{m+\D(a)-1}{i} (a_{(n+i)}b)_{0} = \sum_{i \geq 0} (-1)^i \binom{n}{i} \left[ a_{n-i} b_{-n+i} - (-1)^n b_{-i} a_{i} \right].
\end{align}
By the inductive assumption $\Psi_M(\tau, x | a_{(n+i)}b)$ converges on $Y^+$ for $ i > 0$. So it suffices to analyse the trace of the right hand side.% If $a \in \ov\g_\al$ then the traces vanish unless $b \in V_{-\al}$.

Let $j \in \Z_+$. Using the commutation relations (\ref{comm.Val}) and the symmetry of the trace we obtain
\begin{align*}
\tr_{M} a_{-j} b_j e^{2\pi i x_0} q^{L_0}
&= q^j e^{2\pi i \al(x)} \tr_{M} b_j a_{-j} e^{2\pi i x_0} q^{L_0}.
\end{align*}
If $j > 0$ then $b_j$ is locally nilpotent, and we may deduce
\begin{align*}
\tr_{M} a_{-j} b_j e^{2\pi i x_0} q^{L_0} = \frac{q^j e^{2\pi i \al(x)}}{1-q^j e^{2\pi i \al(x)}} \sum_{i \in \Z_+} \binom{j}{i} \tr_M (b_{(i)}a)_0 e^{2\pi i x_0} q^{L_0}.
\end{align*}
If $a \in \ov\g_{\al}$ for $\al > 0$ then the same reasoning applies. % to $b_0 a_0$.

The trace $\tr_M \left( \cdots \right) e^{2\pi i x_0} q^{L_0}$ of the right hand side of (\ref{implication.borcherds}) is thus reduced to
\begin{align*}
\sum_{i \in \Z_+} \tr_M (a_{(i)}b)_0 e^{2\pi i x_0} q^{L_0} \left[ \sum_{j \geq 0} (-1)^j \binom{n}{j} \left( \binom{-n+j}{i} \frac{q^{-n+j} e^{2\pi i \al(x)}}{1-q^{-n+j} e^{2\pi i \al(x)}} - (-1)^n \binom{j}{i} \frac{q^{j} e^{-2\pi i \al(x)}}{1-q^{j} e^{-2\pi i \al(x)}} \right) \right].
\end{align*}
The denominators appearing within the $j$-summation are uniformly bounded on compact subsets of $Y^+$, and the summations have radius of convergence $1$ in $q$ by the ratio test. The $i$-summation is finite and by our inductive assumption each function $\tr_M (a_{(i)}b)_0 e^{2\pi i x_0} q^{L_0}$ is convergent on $Y^+$. The convergence of $\Psi_M(\tau, x | u)$ follows by induction.

The case of $\al = 0$ must be handled separately, but follows easily from the identity
\[
\tr_M b_0 a_0 e^{2\pi i x_0} q^{L_0} = \left. \frac{d}{dt} \tr_M b_0 e^{2\pi i (x + t a)_0} q^{L_0} \right|_{t=0}.
\]

It is also clear from the induction that multiplication of $\Psi_M(\tau, x | u)$ by a sufficiently high power of
\[
\prod_{j \in \Z_+} (1-q^{n+1})^\ell \cdot \prod_{\al \in \D_+} \prod_{j \in \Z_+} (1 - q^{n} e^{-2\pi i \al(x)}) (1 - q^{n+1} e^{2\pi i \al(x)}),
\]
renders it expressible by a series which, for any fixed $|q| < 1$, has infinite radius of convergence in $x \in \ov\h$. The meromorphicity statement follows.
\end{proof}

\begin{lemma}\label{lemma:admiss.lin.indep}
Let $\ov{\g}$ be a simple Lie algebra and $k$ an admissible level. There exists a nonempty Zariski open subset $D \subset C^\circ_-$ (a complement of finitely many hyperplanes) such that for any $h \in D$ the specialised characters $\chi_{\la}(\tau, \tau h)$, as $\la$ runs over $\coorprin^k$, are linearly independent functions of $\tau$.
\end{lemma}

\begin{proof}
By the definition of $C^\circ_-$ we know that $\chi_{\la}(\tau, \tau h)$ is the sum of an absolutely convergent series in powers of $q$. From the weight space decomposition of $L(\la)$ it follows immediately that the leading term of this series is $q^{\la(h) + h_\la - c_k/24}$. The equality of any pair of the exponents $\la(h) + h_\la - c_k/24$ defines a certain hyperplane in $\ov\h$. We define $D$ to be the complement in $C^\circ_-$ of these hyperplanes, and take $h \in D$ now. Since the power series defining the $\chi_{\la}(\tau, \tau h)$ all begin with unequal powers of $q$, they are clearly linearly independent functions.
\end{proof}

Now we are ready to prove the main theorem of this section.
\begin{thm}\label{theorem1.affine}
Let $\ov\g$ be a simple Lie algebra, and $k \in \Q$ a \textup{(}co\textup{)}principal admissible number for $\ov\g$. Then for all $\la \in \coorprin^k$ we have
\begin{align*}
\Psi_\la\left( \frac{a\tau+b}{c\tau+d}, \frac{x}{c\tau+d} \bigg| (c\tau+d)^{-L_{[0]}} \exp{\left[ -\frac{c}{c\tau+d} {I(x)} \right]} u \right) = \exp\left[ \pi i k \frac{c (x, x)}{c\tau+d} \right] \sum_{\la' \in \coorprin^k} \rho_{\la, \la'}(A) \Psi_{\la'}(\tau, x | u).
\end{align*}
The $S$-matrix $a(\la, \la') = \rho_{\la, \la'}\twobytwo{0}{-1}{1}{0}$ is given by Theorem \ref{KWSmatrix} \textup{(}resp.\ Theorem \ref{KWSmatrix.coprin}\textup{)} in the case that $k$ be principal \textup{(}resp.\ coprincipal\textup{)}.
\end{thm}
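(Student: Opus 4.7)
The plan is to deduce Theorem \ref{theorem1.affine} by specialising Theorem \ref{theorem1.body} to the simple affine vertex algebra $V = V_k(\ov\g)$, with the current $h$ taken to be an element of the Cartan $\ov\h$. First I would fix a rational $h \in C^\circ_-$, further restricted so that for some $\sigma \in \Q_{>0}$ the automorphism $g(\sigma) = \exp(-2\pi i \tfrac{\sigma}{2}h_0)$ has prime order and $\tfrac{\sigma}{2}\al(h) \notin \Z$ for every root $\al \in \ov\D$. Lemma \ref{adm.rel.rational} then guarantees rationality of $V_k(\ov\g)$ relative to $h$, with $h$-stable irreducibles exactly $\{L(\la) : \la \in \coorprin^k\}$ by Theorem \ref{rational.category.O.theorem}, while Lemma \ref{adm.rel.cofinite} provides the requisite relative cofiniteness.

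Next I would reconcile normalisations: Theorem \ref{theorem1.body} is stated under $[h_\la h] = 2\la$ with $p = 0$, whereas for $h \in \ov\h$ one has $[h_\la h] = k(h, h)\la$, still with $p = 0$. Rerunning the derivation of Proposition \ref{prop.Fmodular} with the general scalar $K = k(h, h)$ in place of $2$ leaves the $\D(h, 1)$-operator term unchanged (it is linear in $h$) and adjusts the central-charge shift $c(\sigma)$ in such a way that the prefactor $\exp[2\pi i c z^2/(c\tau+d)]$ in \eqref{Fmodular} becomes $\exp[\pi i k c (h, h) z^2/(c\tau+d)]$. Setting $z = 1$ and identifying $x = h$, this matches the prefactor claimed in Theorem \ref{theorem1.affine}.

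With the hypotheses verified, the restriction $F_{L(\la)}(\tau, z | \vac) = \chi_\la(\tau, zh)$ is a Kac-Wakimoto character. The characters $\{\chi_\la\}_{\la \in \coorprin^k}$ are linearly independent as meromorphic functions (a consequence of their explicit theta-function description and the standard independence of theta series supported on distinct cosets), and they transform under $SL_2(\Z)$ by Theorem \ref{KWSmatrix} (principal case) or Theorem \ref{KWSmatrix.coprin} (coprincipal case). The second bullet of Theorem \ref{theorem1.body} then promotes this transformation from $u = \vac$ to arbitrary $u \in V_k(\ov\g)$, yielding the identity of Theorem \ref{theorem1.affine} restricted to $x \in \C h$.

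Finally I would extend from the single line $\C h \subset \ov\h$ to all $x \in \ov\h$. Since the argument applies to every $h$ in a dense rational subset of $C^\circ_-$, and both sides of the desired identity are meromorphic in $x$ by Lemma \ref{tr.fct.convergence}, the two sides agree on a set containing a non-empty open subset of $\ov\h$, hence throughout $Y$ by analytic continuation. The principal technical obstacle is the bookkeeping of normalisations in the second step and the verification that the same finite-dimensional representation $\rho$ serves for every $h$; the latter follows because $\rho$ is determined by the $u = \vac$ restriction, for which the $S$-matrix of Theorem \ref{KWSmatrix} (resp.\ Theorem \ref{KWSmatrix.coprin}) is manifestly independent of $h$.
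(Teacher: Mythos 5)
Your proposal is correct and follows essentially the same route as the paper: verify relative rationality and cofiniteness via Lemmas \ref{adm.rel.rational} and \ref{adm.rel.cofinite}, apply Theorem \ref{theorem1.body}, identify the $u=\vac$ specialisation with the Kac--Wakimoto characters and their known $S$-matrix, and extend from the line $\C h$ to all of $\ov\h$ using that $C^\circ_-$ contains enough directions. The only (cosmetic) divergence is that you propose rerunning Proposition \ref{prop.Fmodular} with the general constant $K=k(h,h)$, whereas the paper avoids this by normalising $h$ so that $\left<h,h\right>=2$ and recovering general $x$ via the substitution $x=zh$.
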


\begin{proof}
Let $h \in C^\circ_-$, such that $\left<h, h\right> = 2$. %We have $V^{(0)} \subseteq V^{\text{ne}}$.
Relation (\ref{main.lambda.relation}) holds with $p=0$ and the specialisation $\Psi_\la(\tau, zh | u)$ coincides with the trace function $F_{L(\la)}(\tau, z | u)$ of Definition \ref{F.definition}. By Lemmas \ref{adm.rel.rational} and \ref{adm.rel.cofinite} we have rationality and cofiniteness of $V$ relative to $h$ (we observe that the arguments of Section \ref{main.computation} are unaffected by the prime order restriction on $\sigma$ coming from Lemma \ref{adm.rel.cofinite}). We wish to verify the conditions (1) and (2) of Theorem \ref{theorem1.body} for the functions $\chi_\la(\tau, z) = \Psi_\la(\tau, z | \vac)$. Condition (1), modular invariance, is guaranteed by Theorems \ref{KWSmatrix} and \ref{KWSmatrix.coprin}. Condition (2) follows from Lemma \ref{lemma:admiss.lin.indep}. Indeed the functions
\begin{align*}
F_{L(\la)}(\tau, \al\tau+\beta)
= \chi_\la(\tau, (\alpha\tau+\beta)h)
= \tr_{L(\la)} e^{2\pi i \beta h_0} q^{L_0 + \alpha h_0 - c_k/24}
\end{align*}
are linearly independent whenever the set of numbers $h_\la + \alpha \la(h)$, as $\la$ ranges over $\coorprin^k$, are distinct.

We are now able to apply Theorem \ref{theorem1.body} to obtain (\ref{Fmodular.repeat}) with $\mathcal{X} = \coorprin^k$, $F_{L(\la)} = \Psi_\la$ and $\upsilon_{L(\la), L(\la')} = \rho_{\la, \la'}$ is the Kac-Wakimoto representation. Substituting for $x = z h$, and using $\left<h, h\right> = k (h, h)$, in (\ref{Fmodular}) yields the formula asserted in the theorem statement. Now since the set $D$ of Lemma \ref{lemma:admiss.lin.indep} contains a $\C$-linear basis of $\ov\h$ we deduce, by the identity theorem for holomorphic functions, that the modular transformation relation holds as an identity of functions of $(\tau, x) \in \HH \times \C$, valid on the intersection of the domains of convergence of the two sides.
\end{proof}

\section{The Charged Free Fermions}\label{ghost.section}

We recall the theta product
\begin{align}\label{theta.definition}
\Theta(\tau, z) = \frac{\theta_{11}(\tau, z)}{\eta(\tau)} = q^{1/12} e^{\pi i z} \prod_{n=1}^\infty{(1-e^{2\pi i z} q^{n-1})(1-e^{-2\pi i z} q^{n})},
\end{align}
and the classical modular relation {\cite[pp. 475]{Whit.Wat}}
\begin{align}\label{classical.theta}
\Theta(-1/\tau, z/\tau)
= -i e^{\pi i z^2/\tau} \Theta(\tau, z).
\end{align}

Let $U$ be a finite dimensional vector space. The Clifford Lie superalgebra $\cliff(U)$ is defined by 
\[
\cliff(U) = (U \oplus U^*)[t, t^{-1}] \oplus \C 1, \quad [a_m, b_n] = \left<a, b\right> 1
\]
where $U \oplus U^*$ is given odd parity and $\C 1$ even, $a_m$ denotes $at^m$, and $\left<,\right>$ is the natural symmetric pairing on $U \oplus U^*$ defined by $\left<\al, x\right> = \left<x, \al\right> = \al(x)$ for all $x \in U$, $\al \in U^*$.

For now we take $U = \C \psi$ one dimensional, and let $\bigwedge$ be the Fock $\cliff(\C\psi)$-module generated from the highest weight vector $\vac$, subject to the relations $\psi_n \vac = 0$ for $n > 0$, and $\psi^*_n \vac = 0$ for $n \geq 0$. The module $\bigwedge$ has a vertex algebra structure {\cite[Section 3.6]{KacVA}} (which goes by several names, including charged free fermions, and the ghost system) with generating fields
\[
\psi(z) = \sum_n \psi_n z^{-n} \quad \text{and} \quad \psi^*(z) = \sum_n \psi^*_n z^{-n-1}.
\]
%Give $\deg\psi = -1$ and $\deg\psi^* = +1$.

Putting $\om = :(T\psi)\psi^*:$ gives $\bigwedge$ a conformal structure of central charge $c = -2$, in which $\D(\psi) = 0$ and $\D(\psi^*)=1$. If we put $\al = :\psi \psi^*:$ then we have
\begin{align*}
[\al_\la \al] &= \la \\
\text{and} \quad [L_\la \al] &= (T+\la)\al - \la^2.
\end{align*}
Let us put
\[
\Theta(\tau, z | u) = \str_{\bigwedge} u_0 e^{-2\pi i z (\al_0-1/2)} q^{L_0-c/24},
\]
it is straightforward to see that $\Theta(\tau, z | \vac) = \Theta(\tau, z)$. It is known that $\bigwedge$ is $C_2$-cofinite and rational, and that the unique irreducible $\bigwedge$-module is $\bigwedge$. Hence Theorem \ref{theorem1.body} and (\ref{classical.theta}) imply
\[
\Theta\left(-1/\tau, z/\tau | \tau^{-L_{[0]}} e^{-\frac{z}{\tau} {I(\al)}} u\right) = -i e^{\pi i z^2/\tau} \Theta(\tau, z | u).
\]

\section{Principal Affine $W$-Algebras}\label{section.principal.affine}

The affine $W$-algebras form a large and interesting class of vertex algebras. From the data of a finite dimensional simple Lie (super)algebra $\ov\g$, nilpotent element $f \in \g$, and level $k$, the algebra $\W^k(\ov\g, f)$ is obtained via quantised Drinfeld-Sokolov reduction, i.e., as cohomology of the BRST complex, of the affine vertex algebra $V^k(\ov\g)$. See \cite{FeiginFrenkel} for $f$ principal nilpotent, and \cite{KRW03} for the general case.

In this section we study trace functions and their modular transformations for modules of $\W_k(\ov\g)$ the simple quotient of the universal affine $W$-algebra associated with principal nilpotent element $f$, and admissible number $k$. The article \cite{FKW} is an excellent reference.

\subsection{The BRST Complex}

% Following FKW sections 2.2 and 2.3.

Let $\ov{\g}$ be a finite dimensional simple Lie algebra as in Section \ref{liealgebras}. Let $\{e_\al\}_{\al \in \ov{\D}}$ be a root basis of $\ov{\g}$. For $\beta, \ga \in \ov{\D}_+$ the structure constants $c_{\beta, \ga}^\al$ are defined by $[e_\beta, e_\ga] = \sum c_{\beta, \ga}^\al e_\al$. For $\al \in \ov{\D}_+$ we denote by $\varphi_\al \in \ov{\n}_\pm$ the element of $\ov{\n}_\pm$ corresponding to $e_{\pm \al}$, and $\varphi_\al^* \in \ov{\n}_\pm^*$ its dual.

% In \wedge_- we should have phi^* of conformal weight 1. In \wedge_+ we should have phi^* of conformal weight 0

Let $\bigwedge_\pm$ be the Fock $\cliff(\ov{\n}_\pm)$-module generated from a vector $\vac$ with relations $\varphi_{\al, n\geq 0}\vac=0$, $\varphi^*_{\al, n\geq 1}\vac=0$ in the case of $\bigwedge_+$, and relations $\varphi_{\al, n\geq 1}\vac=0$, $\varphi^*_{\al, n\geq 0}\vac=0$ in the case of $\bigwedge_-$. Once again, as in {\cite[Section 3.6]{KacVA}}, $\bigwedge_\pm$ is a vertex algebra, with generating fields
\[
\varphi(z) = \sum_n \varphi_n z^{-n-1} \quad \text{and} \quad \varphi^*(z) = \sum_n \varphi^*_n z^{-n},
\]
in the case of $\bigwedge_+$, and
\[
\varphi(z) = \sum_n \varphi_n z^{-n} \quad \text{and} \quad \varphi^*(z) = \sum_n \varphi^*_n z^{-n-1},
\]
in the case of $\bigwedge_-$ (where $\varphi \in \ov{\n}_\pm$ and $\varphi^* \in \ov{\n}_\pm^*$). The vertex algebra $\bigwedge_\pm$ carries a $\Z$-grading induced by $\deg \vac = 0$, $\deg \varphi = -1$ and $\deg \varphi^* = +1$. It also carries a conformal structure
\[
\om^{\bigwedge, +} = \sum_{\al \in \ov{\D}_+} :(T\varphi_\al^*)\varphi_\al: \quad \text{or} \quad \om^{\bigwedge, -} = \sum_{\al \in \ov{\D}_+} :(T\varphi_\al)\varphi_\al^*:,
\]
which gives conformal weights $\D(\varphi)=1$, $\D(\varphi^*)=0$ in the case of $\bigwedge_+$, and $\D(\varphi)=0$, $\D(\varphi^*)=1$ in the case of $\bigwedge_-$.

Let $k \in \C$. The functor of quantised Drinfeld-Sokolov reduction on $V^k(\ov\g)$-modules (which comes in two variants: $+$ and $-$) is defined as follows. Let $M$ be a $V^k(\ov\g)$-module and put
\[
C^\bullet_\pm(M) = M \otimes \textstyle\bigwedge_\pm,
\]
which is a module over the vertex algebra $C^\bullet_\pm = C^\bullet_\pm(V^k(\ov\g))$. In $C_\pm^\bullet$ the element $Q_\pm = Q^{\text{st}}_\pm + p$ is defined by
\begin{align*}
Q^{\text{st}}_\pm &= \sum_{\al \in \ov{\D}_+} e_{\pm \al} \otimes \varphi^*_\al - \tfrac{1}{2} \sum_{\al, \beta, \ga \in \ov{\D}_+} c_{\beta\ga}^\al :\varphi_\al \varphi^*_\beta \varphi^*_\ga:, \quad \text{and} \quad
p = \sum_{\al \in \ov{\Pi}} \varphi^*_{\al}. %\quad p_- = \sum_{\al \in \ov{\Pi}} \varphi^*_{\al, 0}.
\end{align*}
The module $C_\pm^\bullet(M)$ with $\Z$-grading induced from that on $\bigwedge_\pm$, is regarded as a complex, with the differential $d_\pm = (Q_\pm)_{(0)}$. The quantised Drinfeld-Sokolov reduction of $M$ is the cohomology $H^\bullet_\pm(M)$ of the complex $(C^\bullet_\pm(M), d_\pm)$.

In \cite{FKW} the following vectors of $C^\bullet_\pm$ were introduced.
\[
\widetilde{x} = x \otimes 1 + 1 \otimes F^x, \quad \text{where} \quad F^x = \sum_{\al \in \ov{\D}_+} \al(x) :\varphi_\al \varphi_\al^*:,
\]
for $x \in \ov{\h}$. The associated fields $\widetilde{x}(z)$ commute with $d_+^{\text{st}}$.

At noncritical level $k \neq -h^\vee$ the complex $C^\bullet_+$ carries a conformal structure  % see A07.rep Section 4.17
\[
\om = \om^\text{Sug} \otimes 1 + 1 \otimes \om^{\bigwedge, +} + T \widetilde{\ov{\rho}^\vee},
\]
compatible with the differential. The central charge is
\[
c(k) = \ell - 12 \left[ (k+h^\vee)(\ov\rho^\vee, \ov\rho^\vee) - 2 (\ov\rho, \ov\rho^\vee) + \frac{(\ov\rho, \ov\rho)}{k+h^\vee} \right].
\]
Using the `strange formula' $h^\vee \dim{\ov{\g}} = 12(\ov\rho, \ov\rho)$ of Freudenthal and de Vries \cite{FdV.book}, we rewrite $c(k)$ as follows
\begin{align}\label{central.charge.conversion}
c(k) = c_k - 2|\ov{\D}_+| - 12\left[ (k+h^\vee)(\ov{\rho}^\vee, \ov{\rho}^\vee) - 2(\ov{\rho}, \ov{\rho}^\vee) \right].
\end{align}
% \begin{align}\label{central.charge.conversion}
% \begin{split}
% c(k) &= \ell  - 12 \frac{(\ov\rho, \ov\rho)}{k+h^\vee} - 12 \left[ (k+h^\vee)(\ov\rho^\vee, \ov\rho^\vee) - 2 (\ov\rho, \ov\rho^\vee) \right] \\
% %
% &= \ell - \frac{h^\vee \dim{\ov{\g}}}{k+h^\vee} - 12 \left[ (k+h^\vee)(\ov\rho^\vee, \ov\rho^\vee) - 2 (\ov\rho, \ov\rho^\vee) \right] \\
% %
% &= \ell - \dim{\ov{\g}} + \frac{k \dim{\ov{\g}}}{k+h^\vee} - 12 \left[ (k+h^\vee)(\ov\rho^\vee, \ov\rho^\vee) - 2 (\ov\rho, \ov\rho^\vee) \right] \\
% %
% & = c_k - 2|\ov{\D}_+| - 12\left[ (k+h^\vee)(\ov{\rho}^\vee, \ov{\rho}^\vee) - 2(\ov{\rho}, \ov{\rho}^\vee) \right]
% \end{split}
% \end{align}

We have the following $\la$-bracket relations:
\begin{align*}
[\widetilde{x}_\la \widetilde{x}'] &= (k + h^\vee) \la, \\
\text{and} \quad
[\om_\la \widetilde{x}] &= (T+\la) \widetilde{x} - \la^2 \left[ (k+h^\vee) (\ov{\rho}^\vee, x) - \ov{\rho}(x) \right],
\end{align*}
from \cite[Lemma 3.2]{FKW} and \cite[Theorem 2.4 (b)]{KRW03}, respectively.

For an arbitrary vertex algebra $V$ we put $\lie V = V[t, t^{-1}] / (T + \partial_t) V[t, t^{-1}]$, denoting by $u_{(m)}$ the image of $u t^m$. The formula (\ref{commutator.formula}) defines a Lie algebra structure on $\lie V$.

As explained in {\cite[Section 2.2]{FKW}}, there is a $C_+^\bullet$-module structure on $C_-^\bullet(M)$, implemented by a morphism
\[
\widetilde{w} : U(\lie C_+^\bullet) \rightarrow U(\lie C_-^\bullet).
\]
We just need the following formulas (which follow from (3.1.6), (2.2.4), and (2.2.6) of \cite{FKW}):
\[
\widetilde{w}(L_0) = L_0^\text{Sug} + L_0^{\wedge, -} + (\ov\rho, \ov\rho^\vee) - \frac{k+h^\vee}{2}(\ov\rho^\vee, \ov\rho^\vee),
\]
and
\[
\widetilde{w}(\widetilde{x}_0) = \widetilde{(\ov{w}^0 x)}_0 + (k+h^\vee) (\ov{\rho}^\vee, x),
\]
where $\ov{w}^0$ is the longest element of the finite Weyl group $\ov W$.

For $u \in C^\bullet_+$, and $x \in \ov\h$, put
\begin{align}\label{up.trace}
\Psi_\la(\tau, x | u) = \str_{C^\bullet_-(L(\la))} u_0 e^{2\pi i \left[\tilde{x}_0 - (k+h^\vee)(\ov{\rho}^\vee, x) + \ov{\rho}(x)\right]} q^{L_0 - c(k)/24}.
\end{align}
Note that these functions specialise (under $x = zh$) to the supertrace functions of Definition \ref{F.definition}. Substituting the formulas above for the $C^\bullet_+$-action on $C^\bullet_-(L(\la))$, and using (\ref{central.charge.conversion}), yields
\begin{align*}
\Psi_{\la}(\tau, x | u)
&= \str_{C_-^\bullet(L(\la))} \widetilde{w}(u_0) e^{2\pi i \left[ \widetilde{w}(\widetilde{x}_0) - (k+h^\vee)(\ov{\rho}^\vee, x) + \ov{\rho}(x) \right]} q^{\widetilde{w}(L_0) - c(k)/24} \\
&= q^{(\ov\rho, \ov\rho^\vee) - \frac{k+h^\vee}{2}(\ov\rho^\vee, \ov\rho^\vee) - c(k)/24} \str_{C_-^\bullet(L(\la))} \widetilde{w}(u_0) e^{2\pi i [\widetilde{\ov{w}^0(x_0)} + \ov{\rho}(x)]} q^{L_0^\text{Sug} + L_0^{\wedge, -}} \\
&= e^{2\pi i \ov{\rho}(x)} q^{(\ov\rho, \ov\rho^\vee) - \frac{k+h^\vee}{2}(\ov\rho^\vee, \ov\rho^\vee) - c(k)/24} \str_{C_-^\bullet(L(\la))} \widetilde{w}(u_0) e^{2\pi i \widetilde{\ov{w}^0(x_0)}} q^{L_0^\text{Sug} + L_0^{\wedge, -}} \\
&= e^{2\pi i \ov{\rho}(x)} q^{-(c_k - 2|\ov{\D}_+|)/24} \str_{C_-^\bullet(L(\la))} \widetilde{w}(u_0) e^{2\pi i \widetilde{\ov{w}^0(x_0)}} q^{L_0^\text{Sug} + L_0^{\wedge, -}}.
\end{align*}

Let us write
\[
\Theta_{\ov{\g}}(\tau, x) = \prod_{\al \in \ov{\D}_+} \Theta(\tau, \al(x)), \quad \text{for $\tau \in \HH$ and $x \in \ov\h$}
\]
where $\Theta$ is the theta function (\ref{theta.definition}). Then formula (\ref{classical.theta}), and the relation $\sum_{\al \in \ov{\D}_+} \al(x)^2 = h^\vee (x, x)$, implies
\begin{align}\label{affine.theta}
\Theta_{\ov{\g}}(-1/\tau, z/\tau) = (-i)^{|\D_+|} e^{\pi i h^\vee (x, x)/\tau} \Theta_{\ov{\g}}(\tau, x).
\end{align}

The commutation relations
\[
[F^x_0, \varphi_{\al, n}] = \al(x) \varphi_{\al, n} \quad \text{and} \quad 
[F^x_0, \varphi_{\al, n}^*] = -\al(x) \varphi_{\al, n}^*,
\]
in $\bigwedge_-$, together with the relation $\ov{\rho} = \tfrac{1}{2} \sum_{\al \in \ov{\D}_+} \al$, imply that
\begin{align}\label{supetrace.to.theta}
e^{2\pi i \ov{\rho}(x)} q^{|\D_+|/12} \str_{\bigwedge_-} q^{L^{\wedge, -}_0} e^{2\pi i F^x_0} = \Theta_{\ov{\g}}(\tau, x).
\end{align}

Let $k \in \Q$ be a principal \textup{(}resp.\ coprincipal\textup{)} number for $\ov\g$. The $V^k(\ov\g)$-module $L(\la)$ descends to a module over the simple quotient $V_k(\ov\g)$ if and only if $\la \in \prin^k$ \textup{(}resp.\ $\la\in \coprin^k$\textup{)}. Furthermore any $V_k(\ov\g)$-module from category $\OO_k$ is completely reducible.

\begin{thm}
Let $k \in \Q$ be a \textup{(co}\textup{)}principal admissible number for $\ov\g$, and $\la \in \coorprin^k$. Then the function $\Psi_\la$ of \textup{(}\ref{up.trace}\textup{)} satisfies
\begin{align}\label{up.transform}
\begin{split}
\Psi_\la\left( \frac{-1}{\tau}, \frac{x}{\tau} \bigg| \tau^{-L_{[0]}} \exp{\left[\frac{{I(x)}}{\tau}\right]} u \right)
= \exp{\left[ \pi i (k+h^\vee) \frac{(x, x)}{\tau} \right]}
\sum_{\mu \in \coorprin^k} (-i)^{|\ov{\D}_+|} a({\la, \mu}) \Psi_\mu(\tau, x | u)
\end{split}
\end{align}
where $a(\la, \mu)$ is the $S$-matrix of Theorem \ref{KWSmatrix} \textup{(}resp.\ Theorem \ref{KWSmatrix.coprin}\textup{)} for $k$ principal \textup{(}resp.\ coprincipal\textup{)}.
\end{thm}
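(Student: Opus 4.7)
The plan is to reduce (\ref{up.transform}) to the case $u = \vac$ via Theorem \ref{theorem1.body} applied to the tensor product vertex algebra $V_k(\ov\g) \otimes \bigwedge_-$, and then to handle $u = \vac$ by factorizing the supertrace and combining the $S$-transformations of the Kac-Wakimoto character and of the theta product.

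First, I would apply Theorem \ref{theorem1.body} to $(V_k(\ov\g) \otimes \bigwedge_-, \om^{\mathrm{Sug}}+\om^{\wedge,-})$, taking as current $h = \widetilde{y}$ for a suitable $y \in \ov\h$ rescaled so that $[h_\la h] = 2\la$. Integer conformal grading is automatic; the OPE relations (\ref{main.lambda.relation}) follow from the $\la$-brackets recorded in this section with $p$ read off from $[\om_\la \widetilde{y}]$; rationality and cofiniteness relative to $h$ follow from Lemmas \ref{adm.rel.rational}, \ref{adm.rel.cofinite}, \ref{tensor.cofinite}, and the fact that $\bigwedge_-$ is rational and $C_2$-cofinite (by the same argument as for $\bigwedge$ in Section \ref{ghost.section}). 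The irreducible $h$-stable positive energy modules are $L(\la) \otimes \bigwedge_-$ for $\la \in \coorprin^k$, and the $\widetilde w$-reformulation at the end of this section identifies $\Psi_\la(\tau, x|u)$ with the supertrace function of Theorem \ref{theorem1.body} on $L(\la) \otimes \bigwedge_-$, up to explicit $x$-dependent prefactors.

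Next, I evaluate at $u = \vac$. Since $\widetilde w(\vac_0) = \mathrm{id}$, the trace decouples across the tensor product into a trace over $L(\la)$, which yields $\chi_\la(\tau, \ov w^0 x)$ by definition, and a supertrace over $\bigwedge_-$, which by (\ref{supetrace.to.theta}) yields $\Theta_{\ov\g}(\tau, \ov w^0 x)$; the central-charge identity (\ref{central.charge.conversion}) ensures the $q$-prefactors cancel. I then apply Theorem \ref{theorem1.affine} at $u = \vac$ (the Kac-Wakimoto $S$-transformation of $\chi_\la$, contributing the exponent $\pi i k(x,x)/\tau$ and the matrix $a(\la,\mu)$) and the theta $S$-transformation (\ref{affine.theta}) (contributing the exponent $\pi i h^\vee(x,x)/\tau$ and the factor $(-i)^{|\ov\D_+|}$) to each factor. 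The exponents combine as $\pi i (k+h^\vee)(x,x)/\tau$ and the matrix factors as $(-i)^{|\ov\D_+|} a(\la,\mu)$, yielding (\ref{up.transform}) at $u = \vac$. Finally, the second bullet of Theorem \ref{theorem1.body} extends the relation to arbitrary $u$, using the linear independence of the $\chi_\la$ (hence of the $\Psi_\la(\tau, x|\vac)$) for $\la \in \coorprin^k$.

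The main obstacle is the careful accounting of the various $x$-dependent prefactors: the shifts $-(k+h^\vee)(\ov\rho^\vee, x)+\ov\rho(x)$ in the definition (\ref{up.trace}), the $\ov w^0$-twist introduced by $\widetilde w$ (through the formula $\widetilde w(\widetilde x_0)=\widetilde{\ov w^0 x}_0+(k+h^\vee)(\ov\rho^\vee,x)$), and the central-charge correction (\ref{central.charge.conversion}) must conspire so that the prefactors on the two sides of (\ref{up.transform}) cancel cleanly, leaving only the claimed $\pi i(k+h^\vee)(x,x)/\tau$ exponential and $(-i)^{|\ov\D_+|}$ scalar factor. Resolving this will likely require combining the explicit formulas for $\widetilde w(L_0)$ and $\widetilde w(\widetilde x_0)$ with the $\ov W$-transformation properties of $\chi_\la$ and $\Theta_{\ov\g}$ under $x\mapsto \ov w^0 x$.
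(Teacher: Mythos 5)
Your proposal is correct and follows essentially the same route as the paper: establish the $u=\vac$ case by factorising the supertrace into $\chi_\la(\tau,x)\,\Theta_{\ov\g}(\tau,x)$ and combining Theorem \ref{KWSmatrix} (resp.\ \ref{KWSmatrix.coprin}) with (\ref{affine.theta}), then invoke relative rationality and cofiniteness of the tensor product (via Lemmas \ref{adm.rel.rational}, \ref{adm.rel.cofinite}, \ref{tensor.cofinite} and the $C_2$-cofiniteness of the fermions) to extend to general $u$ by Theorem \ref{theorem1.body}. The only cosmetic difference is that the paper applies Theorem \ref{theorem1.body} to $C^\bullet_+ = V_k(\ov\g)\otimes\bigwedge_+$ with current $\widetilde x$, regarding $C^\bullet_-(L(\la))$ as a $C^\bullet_+$-module via $\widetilde w$, whereas you work on the module side with $V_k(\ov\g)\otimes\bigwedge_-$ and translate through $\widetilde w$ afterwards; the prefactor bookkeeping you flag is exactly the computation the paper carries out just before the theorem.
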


\begin{proof}
We first show that the specialisation of (\ref{up.transform}) to $u = \vac$ holds. Indeed
\begin{align*}
\Psi_{\la}(\tau, x | \vac)
&= e^{2\pi i \ov{\rho}(x)} q^{-(c_k - 2|\ov{\D_+}|)/24} \tr_{L(\la)} q^{L_0} e^{2\pi i x_0} \cdot \str_{\bigwedge_-} q^{L_0^{\wedge, -}} e^{2\pi i F^x_0} \\
&= \chi_\la(\tau, x) \Theta_{\ov{\g}}(\tau, x),
\end{align*}
and the claim immediately follows from Proposition \ref{KWSmatrix} and equation (\ref{affine.theta}).

The vertex algebra $\bigwedge_+$ is $C_2$-cofinite and rational with unique irreducible module. Recall that $C_2$-cofiniteness implies cofiniteness relative to any splitting, and recall Lemma \ref{tensor.cofinite} on cofiniteness for tensor products. It follows that if $V_k(\ov\g)$ is rational and cofinite relative to $x \in \ov\h$, then
\[
C_+^\bullet = V_k(\ov\g) \otimes \textstyle{\bigwedge}_+
\]
is rational and cofinite relative to $\widetilde{x}$. The results of Section \ref{adm.aff.case} together with Theorem \ref{theorem1.body} now imply (\ref{up.transform}) for general $u$.
\end{proof}

\subsection{Trace functions of $W$-algebra Modules}

The \emph{\textup{(}principal\textup{)} affine $W$-algebra} associated with the simple Lie algebra $\ov\g$ at the level $k \in \C$ is the vertex algebra
\[
\W^k(\ov\g) = H^0(C^\bullet_+, d_+).
\]
The assignment
\[
M \mapsto H^0(C^\bullet_-(M), d_-)
\]
defines a functor $H^0_-(-)$ from $V^k(\ov\g)$-modules to $\W^k(\ov\g)$-modules.

Let $Z(\ov{\g})$ denote the centre of the universal enveloping algebra $U(\ov{\g})$. Each weight $\mu \in \ov{\h}^*$ yields a character $\ga_{\mu} : Z(\ov{\g}) \rightarrow \C$ via evaluation on the Verma $\ov\g$-module $M(\mu)$.

Starting with a conformal vertex algebra $(V, \om)$ Zhu constructed an associative algebra $\zhu(V)$ and an induction functor from $\zhu(V)$-modules to positive energy $V$-modules. This functor is a bijection on simple objects \cite{Z96}. The Zhu algebra of $\W^k(\ov\g)$ is isomorphic to $Z(\ov\g)$ {\cite[Theorem 4.16.3 (ii)]{A07.rep}} (see also {\cite[Proposition 3.3 (a)]{FKW}}). We denote by $\mathbb{L}(\ga)$ the $\W^k(\ov\g)$-module induced from the one dimensional $Z(\ov\g)$-module $\C \ga$ associated with the character $\ga$.

We now denote by $\W_k(\ov\g)$ the simple quotient of $\W^k(\ov\g)$. Let
\begin{align*}
\coorprin^k_{\text{nondeg}} &= \{ \la \in \coorprin^k | \text{ $\la(\al^\vee) \notin \Z$ for all $\al^\vee \in \ov{\D}^\vee$} \} \\
\text{and} \quad \coorprin^k_\W &= \{ \ga_{\ov{\la}} | \la \in \coorprin^k_{\text{nondeg}} \}.
\end{align*}
In the following theorem we summarise the results about $\W_k(\ov\g)$ and $H_-^0(-)$ that we shall use.
\begin{thm}\label{W.summary}
Let $\ov\g$ be a simple Lie algebra and $\g$ the associated untwisted affine Kac-Moody algebra.
\begin{enumerate}
\item {\cite[Theorem 7.6.1]{A07.rep}} For any level $k \in \C$ and module $M \in \OO_k$, one has $H^i_-(M) = 0$ for all $i \neq 0$.

\item {\cite[Corollary 7.6.4]{A07.rep}} For $k$ an admissible number, and $\la \in \coorprin^k_{\text{nondeg}}$, one has
\[
H^0_-(L(\la)) \cong \mathbb{L}(\ga_{-\ov{w}^0(\ov{\la})}),
\]
where $\ov{w}^0$ is the longest element in the finite Weyl group $\ov W$. If $\la \in \coorprin^k \backslash \coorprin^k_{\text{nondeg}}$, then
\[
H^0_-(L(\la)) = 0.
\]

\item {\cite[Theorem 10.4]{A12rational}} For $k$ a nondegenerate admissible number \textup{(}i.e., one for which $\coorprin^k_{\text{nondeg}}$ is nonempty\textup{)}, the set of irreducible $\W_k(\ov\g)$-modules is precisely
\[
\{ \mathbb{L}_\W(\ga) | \ga \in \coorprin_\W^k \}.
\]

\item For $k \in \Q$ a nondegenerate admissible number, the vertex algebra $\W_k(\ov\g)$ is rational {\cite[Theorem 10.10]{A12rational}} and $C_2$-cofinite {\cite[Theorem 5.10.2]{A.assoc.var}}.

%\item {\cite[Theorem 5.10.2]{A.assoc.var}} For $k \in \Q$ a nondegenerate admissible number, the vertex algebra $\W_k(\ov\g)$ is $C_2$-cofinite.
\end{enumerate}
\end{thm}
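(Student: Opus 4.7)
Since Theorem \ref{W.summary} is framed as a compilation of four previously published results, each with a specific citation, my proof proposal amounts to invoking the relevant sources and describing the shape of the arguments they rest on. There is no genuine obstacle to overcome beyond checking that each item is indeed a direct restatement of the cited theorem; the substantive work lies in the references.

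For item (1), \cite[Theorem 7.6.1]{A07.rep} establishes the exactness of the $(-)$-variant of Drinfeld--Sokolov reduction on the BGG category $\OO_k$. The strategy there is to equip $C^\bullet_-(M)$ with a Kazhdan-type filtration, so that the associated spectral sequence reduces the cohomology of $d_-$ to ordinary Lie algebra cohomology of a pro-nilpotent Lie algebra acting on a module with a locally finite weight decomposition; vanishing in positive degree for $M \in \OO_k$ is then standard. For item (2), once (1) is in hand, $H^0_-(L(\la))$ is a genuine positive energy $\W^k(\ov\g)$-module, and its isomorphism class is pinned down by the Zhu algebra identification $\zhu(\W^k(\ov\g)) \cong Z(\ov\g)$: the top piece of $H^0_-(L(\la))$ realises the one-dimensional $Z(\ov\g)$-module with central character $\ga_{-\ov w^0(\ov\la)}$, yielding $H^0_-(L(\la)) \cong \mathbb{L}(\ga_{-\ov w^0(\ov\la)})$. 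Vanishing when $\la \in \coorprin^k \setminus \coorprin^k_{\text{nondeg}}$ is \cite[Corollary 7.6.4]{A07.rep} and comes from non-regularity of $\la$ forcing the top cohomology to collapse.

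For items (3) and (4), the classification of irreducible $\W_k(\ov\g)$-modules and rationality at nondegenerate admissible level are the main results of \cite{A12rational}. The classification combines the Zhu algebra identification with the criterion, read off from (2) applied to admissible modules of $V_k(\ov\g)$, for a $\W^k(\ov\g)$-module $\mathbb{L}(\ga)$ to descend to the simple quotient $\W_k(\ov\g)$. Rationality is then obtained by a semisimplicity argument for the resulting finite family of modules, ultimately traceable back to Theorem \ref{rational.category.O.theorem} for $V_k(\ov\g)$. Finally, $C_2$-cofiniteness \cite[Theorem 5.10.2]{A.assoc.var} is proved by showing that the associated variety $X_{\W_k(\ov\g)} = \spec{R(\W_k(\ov\g))}$ is reduced to the point $\{0\}$: the inclusion $X_{V_k(\ov\g)} \subset \CN$ for admissible $k$ (as in the proof of Lemma \ref{adm.rel.cofinite}) is pushed forward along the BRST reduction, realised geometrically as intersection with the Slodowy slice transverse to the regular nilpotent orbit, which meets $\CN$ only at the origin; hence $\dim R(\W_k(\ov\g)) < \infty$.
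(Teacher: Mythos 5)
Your proposal is correct and takes the same approach as the paper, which offers no proof of Theorem \ref{W.summary} at all: it is explicitly presented as a summary of the four cited results, so invoking the references is exactly what is intended. Your sketches of the underlying arguments (spectral sequence for the vanishing, Zhu algebra identification for the classification, and the associated variety/Slodowy slice argument for $C_2$-cofiniteness) are consistent with the cited sources and go beyond what the paper itself records.
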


Following the lead of \cite{FKW} we use the Euler-Poincar\'{e} principle to relate the character of $C^\bullet_-(M)$ to that of its cohomology $H^0_-(M)$, and thereby to compute the modular transformations of characters of the latter modules. The principle states that if $C^\bullet$ is a complex with finite dimensional components and $U$ is a degree $0$ endomorphism of $C^\bullet$ commuting with the differential then
\[
\str_{C^\bullet} U = \str_{H^\bullet(C^\bullet)} U.
\]

Now let $k$ be an admissible number, and $\la \in \coorprin^k$. Although the components of $C^\bullet_-(L(\la))$ are infinite dimensional, they are bigraded by $L_0$ and $\tilde{x}_0$ with finite dimensional pieces. It is thus valid to write
\[
\tr_{H^0_-(L(\la))} u_0 e^{2\pi i \tilde{x_0}} q^{L_0} = \str_{C^\bullet_-(L(\la))} u_0 e^{2\pi i \tilde{x_0}} q^{L_0}
\]
(for $u \in C^\bullet_+$ a chain). Let
\[
\psi_\la(\tau | u) = \tr_{H^0_-(L(\la))} u_0 q^{L_0-c(k)/24}.
\]
Then $\lim_{x \rightarrow 0} \Psi_\la(\tau, x | u) = \psi_\la(\tau | u)$ and passing to the $x \rightarrow 0$ limit of the relation (\ref{up.transform}) yields the following.
\begin{cor}\label{down.transform}
The functions $\psi_\la(\tau | u)$ satisfy
\[
\psi_\la(-1/\tau | \tau^{-L_{[0]}} u) = \sum_{\mu \in \coorprin^k} (-i)^{|\ov{\D}_+|} a({\la, \mu}) \psi_\mu(\tau | u),
\]
where $a(\la, \mu)$ is the $S$-matrix of Theorem \ref{KWSmatrix} \textup{(}resp.\ Theorem \ref{KWSmatrix.coprin}\textup{)} for $k$ principal \textup{(}resp.\ coprincipal\textup{)}.
\end{cor}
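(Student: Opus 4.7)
My plan is to obtain the corollary as the $x \to 0$ specialization of the transformation formula \eqref{up.transform} already proved for $\Psi_\la$, using the Euler--Poincar\'e principle to identify each $\Psi_\la(\tau, 0 \mid u)$ with $\psi_\la(\tau \mid u)$.

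The first main step is to establish
\[
\str_{C^\bullet_-(L(\la))} u_0\, e^{2\pi i \widetilde{x}_0}\, q^{L_0 - c(k)/24} = \tr_{H^0_-(L(\la))} u_0\, e^{2\pi i \widetilde{x}_0}\, q^{L_0 - c(k)/24}
\]
for $u \in C^\bullet_+$ a cocycle representing a class in $\W^k(\ov\g)$. The operators $u_0$, $\widetilde{x}_0$, and $L_0$ all commute with $d_-$: the latter two as zero modes of cocycles in $C^\bullet_+$, and the first by virtue of the morphism $\widetilde{w}$ through which $C^\bullet_+$ acts on $C^\bullet_-(L(\la))$. Although individual $L_0$-graded pieces of $C^\bullet_-(L(\la))$ may be infinite-dimensional, the bigrading by $(L_0, \widetilde{x}_0)$ has finite-dimensional joint eigenspaces because $L(\la)$ lies in $\OO_k$ and has finite-dimensional $\ov\h \oplus \C d$-weight spaces. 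Applying Euler--Poincar\'e termwise and invoking the cohomological vanishing $H^i_-(L(\la)) = 0$ for $i \neq 0$ of Theorem \ref{W.summary}(1), the above supertrace equals the trace on $H^0_-$. Factoring the scalar prefactor out of the definition of $\Psi_\la$ then yields
\[
\Psi_\la(\tau, x \mid u) = e^{2\pi i [\ov\rho(x) - (k+h^\vee)(\ov\rho^\vee, x)]}\, \tr_{H^0_-(L(\la))} u_0\, e^{2\pi i \widetilde{x}_0}\, q^{L_0 - c(k)/24},
\]
from which $\lim_{x \to 0} \Psi_\la(\tau, x \mid u) = \psi_\la(\tau \mid u)$ is immediate.

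The second and final step is to pass to the $x \to 0$ limit in \eqref{up.transform}. The prefactor $\exp[\pi i(k+h^\vee)(x,x)/\tau]$ tends to $1$, and the shift operator $\exp[\D(x,1)/\tau]$ appearing in the argument on the left hand side becomes trivial as $x \to 0$, since the operator in the exponent depends linearly on $x$; indeed, as the proof of Proposition \ref{prop.Fmodular} shows, it is a scalar multiple of $(2\pi i)^2 x_{([1])}$, which vanishes at $x=0$. Hence the LHS tends to $\psi_\la(-1/\tau \mid \tau^{-L_{[0]}} u)$ and the RHS to $\sum_\mu (-i)^{|\ov\D_+|} a(\la, \mu) \psi_\mu(\tau \mid u)$, yielding the claimed identity. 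The only real subtlety is justifying the Euler--Poincar\'e step despite infinite-dimensional $L_0$-graded pieces, but this is handled cleanly by working throughout with the $(L_0, \widetilde{x}_0)$-bigrading and specializing to $x=0$ only at the end.
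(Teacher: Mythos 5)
Your proposal is correct and follows essentially the same route as the paper: both use the Euler--Poincar\'e principle with the $(L_0, \widetilde{x}_0)$-bigrading (whose joint eigenspaces are finite dimensional) together with the vanishing $H^i_-(L(\la))=0$ for $i\neq 0$ to identify $\Psi_\la$ with a trace over $H^0_-(L(\la))$, and then pass to the $x\to 0$ limit of (\ref{up.transform}). Your added remarks on why the operators commute with $d_-$ and why the shift operator degenerates to the identity at $x=0$ are correct elaborations of steps the paper leaves implicit.
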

The special case $u = \vac$ of Corollary \ref{down.transform} was obtained as \cite[Proposition 4.4]{KW90}.

\subsection{Parametrisation of Irreducible {$\W_k(\overline{\g})$}-modules}

% STRANGE LATEX BEHAVIOUR: If I put $\W_k(\ov\g)$ inside \subsection{...} above, I get an error message.

Let $k \in \Q$ be a principal admissible number for $\ov\g$. We define $p, q \in \Z$ by $k + h^\vee = p/q$, $q > 0$, and $(p, q)=1$. Let
\[
{I_{p, q} = \frac{P_{+}^{p-h^\vee} \times P_{+}^{\vee, q-h}}{\widetilde{W}_+},}
\]
where the action of $\widetilde{W}_+$ is $w(\la, \la') = (w\la, w\la')$. There is a bijection
\begin{align}\label{WI.bijection}
\begin{split}
\ov{W} \times I_{p, q} &\rightarrow \prin^k_{\text{nondeg}} \\
(\ov w, (\la, \la')) &\mapsto \ov w . \left( \ov{\la} - (k+h^\vee)(\ov{\la'}+\ov{\rho}^\vee) + (k+h^\vee)\La_0 \right).
\end{split}
\end{align}
This descends to a bijection
\begin{align}\label{I.bijection}
I_{p, q} \rightarrow \prin_\W^k.
\end{align}
For $(\la, \la') \in I_{p, q}$, we define $\mathbb{L}(\la, \la') = \mathbb{L}(\ga)$, where $\ga \in \prin^k_\W$ is the central character associated with $(\la, \la')$ via (\ref{I.bijection}). Let
\[
\varphi_{\la, \la'}(\tau | u) = \tr_{\mathbb{L}(\la, \la')} u_0 q^{L_0 - c(k)/24}.
\]
If $(\ov w, (\la', \la'')) \mapsto \la$ under the bijection above then $\varphi_{\la', \la''} = \psi_\la$.

The modular $S$-transformation of the functions $\varphi_{\la, \la'}(\tau | u)$ can be derived from Corollary \ref{down.transform}. For $u = \vac$ this derivation was carried out in \cite[Proposition 4.4]{KW90}. The same calculation yields the general result
\begin{cor}\label{S.matrix.factorised}%[{\cite[Section 4.2]{FKW}}]
The trace functions $\varphi_{\la, \la'}(\tau | u)$ satisfy
\[
\varphi_{\la, \la'}(-1/\tau | (\tau)^{-L_{[0]}} u) = \sum_{(\mu, \mu') \in I_{p, q}} S_{(\la, \la'), (\mu, \mu')} \varphi_{\mu, \mu'}(\tau | u),
\]
where
\begin{align*}
S_{(\la, \la'), (\mu, \mu')}
= {} & (pq)^{-\ell/2} |J|^{-1/2} e^{2\pi i \left[ (\ov{\la}'+\ov{\rho}, \ov{\mu}+\ov{\rho}) + (\ov{\la}+\ov{\rho}, \ov{\mu}'+\ov{\rho}) \right]} \\
&\times \sum_{y \in \ov{W}} \eps(y) e^{-\frac{2\pi i p}{q}(\ov{\la}'+\ov{\rho}, y(\ov{\mu}'+\ov{\rho}))} \sum_{w \in \ov{W}} \eps(w) e^{-\frac{2\pi i q}{p}(\ov{\la}+\ov{\rho}, w(\ov{\mu}+\ov{\rho}))}.
\end{align*}
\end{cor}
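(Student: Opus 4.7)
The plan is to derive the factorised $S$-matrix from Corollary \ref{down.transform} by mimicking the computation of \cite[Proposition 4.4]{KW90} at the level of trace functions rather than characters. Since Corollary \ref{down.transform} already supplies the $S$-transform of $\psi_\la(\tau|u)$ in terms of the Kac-Wakimoto matrix $a(\la,\mu)$, the remaining task is a purely combinatorial reorganisation of the right hand side in the shape imposed by the bijection (\ref{I.bijection}), and every step of this reorganisation is $\C$-linear in $u$.

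Concretely, fix $(\la,\la') \in I_{p,q}$ and let $\widetilde\la \in \prin^k_{\text{nondeg}}$ be any preimage under (\ref{WI.bijection}); by Theorem \ref{W.summary}(2) we have $\varphi_{\la,\la'}(\tau|u) = \psi_{\widetilde\la}(\tau|u)$. Apply Corollary \ref{down.transform} to $\psi_{\widetilde\la}$. The effective sum on the right runs only over $\prin^k_{\text{nondeg}}$, since $\psi_{\widetilde\mu}$ vanishes for $\widetilde\mu \in \prin^k \setminus \prin^k_{\text{nondeg}}$ by Theorem \ref{W.summary}(2) again. Decompose this sum via the bijection $\prin^k_{\text{nondeg}} \cong \ov W \times I_{p,q}$ into a double sum over $\ov w' \in \ov W$ and $(\mu,\mu') \in I_{p,q}$, and collect the inner $\ov w'$-sum as the coefficient of $\varphi_{\mu,\mu'}(\tau|u)$.

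To identify this inner sum with the stated expression, insert the Kac-Wakimoto triples $(\nu,\ov y,\beta)$ and $(\nu',\ov y',\beta')$ of Theorem \ref{KWSmatrix} for $\widetilde\la$ and $\widetilde\mu$. Choosing preimages with $\ov y = \ov y' = 1$, the data $\nu$ and $\beta$ may be read off directly from the two components of (\ref{WI.bijection}), with $\nu$ corresponding to $\ov\la+\ov\rho$ and $\beta$ to $\ov\la'+\ov\rho^\vee$. The inner $\ov w'$-summation then combines with the explicit finite Weyl-group sum appearing in $a(\widetilde\la,\widetilde\mu)$ to factor into two independent $\ov W$-summations: one giving $\sum_{w \in \ov W} \eps(w) e^{-(2\pi i q/p)(\ov\la+\ov\rho,\, w(\ov\mu+\ov\rho))}$, and a dual one giving $\sum_{y \in \ov W} \eps(y) e^{-(2\pi i p/q)(\ov\la'+\ov\rho,\, y(\ov\mu'+\ov\rho))}$. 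The cross-terms $e^{-2\pi i[(\nu|\beta')+(\nu'|\beta)+(p/q)(\beta|\beta')]}$ from Theorem \ref{KWSmatrix} meanwhile reassemble into the prefactor $e^{2\pi i[(\ov\la'+\ov\rho,\,\ov\mu'+\ov\rho)+(\ov\la+\ov\rho,\,\ov\mu+\ov\rho)]}$.

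The main obstacle is the arithmetic of the scalar prefactor. The factor $i^{|\ov\D_+|}|\ov P/pq\ov Q^\vee|^{-1/2}$ from Theorem \ref{KWSmatrix}, the $(-i)^{|\ov\D_+|}$ from Corollary \ref{down.transform}, and the $|\widetilde W_+| = |J|$ overcount arising from the $\widetilde W_+$-quotient in the definition of $I_{p,q}$ must combine to exactly $(pq)^{-\ell/2}|J|^{-1/2}$, using Lemma \ref{very.technical} to evaluate the relevant lattice index. This bookkeeping is executed for $u = \vac$ in \cite[Proposition 4.4]{KW90}; since every step between Corollary \ref{down.transform} and the displayed formula is $\C$-linear in $u$ and modifies only the scalar coefficients multiplying $\varphi_{\mu,\mu'}(\tau|u)$, the same argument delivers the identity for arbitrary $u \in \W^k(\ov\g)$.
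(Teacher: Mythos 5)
Your proposal is correct and follows essentially the same route as the paper, which likewise derives the result from Corollary \ref{down.transform} by invoking the $u=\vac$ computation of \cite[Proposition 4.4]{KW90} and observing that every step of that reorganisation (restriction to nondegenerate weights, decomposition via the bijection (\ref{WI.bijection}), and the scalar bookkeeping) acts linearly in $u$ on the coefficients of the $\varphi_{\mu,\mu'}$. The paper's proof is just a terser version of yours, deferring all of the combinatorial detail you spell out to the cited reference.
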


\subsection{Fusion Rules}

There is a notion of `fusion' tensor product $\dot{\otimes}$ on the category of $V$-modules over a suitably well-behaved vertex algebra $V$ (defined at this level of generality by Huang and Lepowsky in {\cite{HLtensorIandII}}). Let $\irr(V)$ denote the set of isomorphism classes of irreducible $V$-modules. The Verlinde formula posits a relationship between the fusion rules $\mathcal{N}$ defined by
\[
A \dot\otimes B \cong \bigoplus_{C \in \irr(V)} \mathcal{N}_{A, B}^C C,
\]
for $A, B \in \irr(V)$, and the coefficients $S_{A, B}$ of the $S$-matrix of the trace functions on irreducible $V$-modules. Namely
\begin{align}\label{Verlinde.formula}
\mathcal{N}_{A, B}^C = \sum_{X \in \irr(V)} \frac{S_{A, L} S_{B, L} S_{L, C'}}{S_{V, L}}.
\end{align}
Here $M'$ denotes the adjoint module of $M$ {\cite[Section 5]{B86}}. Formula (\ref{Verlinde.formula}) was proved (for suitably regular vertex algebras) by Huang in \cite{Huang08Contemp}.

The irreducible modules of the simple affine vertex algebra $V_k(\ov\g)$ at nonnegative integer level $k \in \Z_+$ are indexed by $P_+^k$. The $S$-matrix in this case is known since \cite{KP84}, and the fusion rules $\mathcal{N}_{\la, \mu}^\nu$ may thereby be determined via the Verlinde formula. Substantial effort has been devoted to efficient calculation of these coefficients in the physics literature \cite[Chapter 16]{DMS.CFT}.

Huang's proof of the Verlinde formula also applies to $\W_k(\ov\g)$. The chief technical conditions on $\W_k(\ov\g)$ that need to be verified are supplied by {Theorem \ref{W.summary} part (4)}. Using the Verlinde formula and Corollary \ref{S.matrix.factorised} it is possible to express the fusion rules of $\W_k(\ov\g)$ in terms of the fusion rules $\mathcal{N}_{\la, \mu}^\nu$ above. This calculation was done in \cite{FKW} for a simply laced $\g$. We quote the answer.
\begin{thm}[{\cite[Theorem 4.3]{FKW}}]
Let $\ov\g$ be simply laced and let $k = p/q - h^\vee$ as above be a principal admissible number for $\ov\g$. Assume that $(q, |J|) = 1$. Choose the representatives $(\la, \la'), (\mu, \mu'), (\nu, \nu') \in I_{p, q}$ such that $\ov{\la'}, \ov{\mu'}, \ov{\nu'} \in \ov{Q}$. Then one has the following expression for the fusion rules between irreducible $\W_k(\ov\g)$-modules:
\[
\mathcal{N}_{(\la, \la'), (\mu, \mu')}^{(\nu, \nu')} = \mathcal{N}_{\la, \mu}^{\nu} \mathcal{N}_{\la', \mu'}^{\nu'}.
\]
\end{thm}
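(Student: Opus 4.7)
The plan is to start from the Verlinde formula (\ref{Verlinde.formula}), which applies to $\W_k(\ov\g)$ by virtue of Theorem \ref{W.summary}(4) and Huang's theorem, and then substitute the factorised $S$-matrix given in Corollary \ref{S.matrix.factorised}. Writing $S_{(\la,\la'),(\mu,\mu')} = C \cdot e^{2\pi i[(\ov\la+\ov\rho,\ov\mu+\ov\rho)+(\ov\la'+\ov\rho,\ov\mu'+\ov\rho)]} T_{\la',\mu'} T_{\la,\mu}$, where $T_{\la,\mu} = \sum_{w\in\ov W}\eps(w)e^{-2\pi i (q/p)(\ov\la+\ov\rho,w(\ov\mu+\ov\rho))}$ (and similarly with $p,q$ swapped for $T_{\la',\mu'}$), the key observation is that, up to normalisation, $T_{\la,\mu}$ coincides with the Kac--Peterson $S$-matrix for irreducible highest weight integrable modules of $\widehat{\ov\g}$ at level $p-h^\vee$ (in the simply laced case $h = h^\vee$, so $h^\vee = h$), and $T_{\la',\mu'}$ coincides with the one at level $q-h^\vee$. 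The plan is to substitute this factorisation into (\ref{Verlinde.formula}), and show that the resulting double sum splits as a product of two Verlinde-formula expressions, one at level $p-h^\vee$ and one at level $q-h^\vee$.

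More concretely, I would organise the computation as follows. First, record the relationship between the normalised $T$-matrices and the Kac--Peterson $S$-matrices $S^{p-h^\vee}$ and $S^{q-h^\vee}$ on the sets $P_+^{p-h^\vee}$ and $P_+^{q-h^\vee}$ of integrable weights, using the explicit Kac--Peterson formula; this produces some overall scalar factor. Second, use the bijection (\ref{I.bijection}) to replace a sum over $\prin^k_\W$ by a sum over $I_{p,q} = (P_{++}^{p-h^\vee} \times P_{++}^{\vee,q-h})/\widetilde W_+$. Third, lift this sum over $I_{p,q}$ to a sum over $P_{++}^{p-h^\vee} \times P_{++}^{\vee,q-h}$ divided by $|\widetilde W_+| = |J|$; the hypothesis that representatives can be chosen with $\ov\la',\ov\mu',\ov\nu' \in \ov Q$, together with $(q,|J|)=1$, guarantees that the $\widetilde W_+$-action passes cleanly to $P_{++}^{p-h^\vee} \times P_{++}^{q-h^\vee}$ (identifying the coroot and root lattices via simply-lacedness) without creating spurious phases in the exponential prefactor. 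Fourth, note that the adjoint operation $(\la,\la') \mapsto (\la,\la')'$ acts via the longest Weyl element separately on each factor, so the prime in the Verlinde formula also respects the product structure.

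After these preparations the sum over $(\la'',\la''') \in I_{p,q}$ in the Verlinde formula splits into a product of two independent sums: one over $\la'' \in P_+^{p-h^\vee}$ yielding $\mathcal{N}^\nu_{\la,\mu}$ and one over $\la''' \in P_+^{q-h^\vee}$ yielding $\mathcal{N}^{\nu'}_{\la',\mu'}$. The scalar normalisation constants, together with the division by $|J|$ from the previous step, must conspire to cancel exactly so that the product of Kac--Peterson Verlinde sums survives; checking this cancellation is essentially the content of \cite[Theorem 4.3]{FKW} at the level of characters, and our point is simply that the identity now lifts to trace functions.

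The main obstacle is the bookkeeping in the middle step: controlling the $\widetilde W_+$-orbit structure on $P_{++}^{p-h^\vee} \times P_{++}^{q-h^\vee}$ and verifying that the coprimality condition $(q,|J|)=1$ together with the chosen integral representatives trivialises it to an honest product. Once this is done, everything else is a direct manipulation. The upgrade from the character-level result of \cite{FKW} to the trace-function-level identity is automatic, because Theorem \ref{theorem1.body} (via Corollary \ref{C2.case}, since $\W_k(\ov\g)$ is rational and $C_2$-cofinite) guarantees that the $S$-matrix computed for $u=\vac$ coincides with the $S$-matrix governing the full trace functions $\varphi_{\la,\la'}(\tau \mid u)$, which is what enters Huang's proof of Verlinde.
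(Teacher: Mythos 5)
Your proposal follows exactly the route the paper takes: invoke Huang's Verlinde formula for $\W_k(\ov\g)$ (justified by Theorem \ref{W.summary}(4)), substitute the factorised $S$-matrix of Corollary \ref{S.matrix.factorised}, and defer the splitting of the resulting sum into a product of two Kac--Peterson Verlinde sums to the computation of \cite[Theorem 4.3]{FKW}, with the paper's real contribution being that the $S$-matrix identity now holds for all $u$ rather than only $u=\vac$. The paper gives no further proof beyond this, so your outline matches it; the extra detail you supply on the $\widetilde W_+$-orbit bookkeeping is consistent with what FKW actually do.
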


%\newpage

\def\cprime{$'$} \def\cprime{$'$}

\end{document}